\def\DateTime{21/February/2014
}
\def\Version{Version $3.0$}
\def\yes{\if00}
\def\no{\if01}
\def\iftenpt{\no}
\def\ifelevenpt{\no}
\def\iftwelvept{\yes}
\def\ifquery{\yes}
\theoremstyle{plain}
\newtheorem{Theorem}{Theorem}[section]
\newtheorem{Proposition}[Theorem]{Proposition}
\newtheorem{Lemma}[Theorem]{Lemma}
\newtheorem{Corollary}[Theorem]{Corollary}
\newtheorem{Claim}{Claim}[Theorem]
\theoremstyle{definition}
\newtheorem{Definition}[Theorem]{Definition}
\newtheorem{Remark}[Theorem]{Remark}
\newtheorem{Example}[Theorem]{Example}
\newtheorem{Question}[Theorem]{Question}
\renewcommand{\theTheorem}{\arabic{section}.\arabic{Theorem}}
\renewcommand{\theClaim}{\arabic{section}.\arabic{Theorem}.\arabic{Claim}}
\numberwithin{equation}{section}
\def\rom{\textup}
\newcommand{\ZZ}{{\mathbb{Z}}}
\newcommand{\QQ}{{\mathbb{Q}}}
\newcommand{\RR}{{\mathbb{R}}}
\newcommand{\CC}{{\mathbb{C}}}
\newcommand{\PP}{{\mathbb{P}}}
\newcommand{\FF}{{\mathbb{F}}}
\newcommand{\AAA}{{\mathscr{A}}}
\newcommand{\CCC}{{\mathscr{C}}}
\newcommand{\DDD}{{\mathscr{D}}}
\newcommand{\EEE}{{\mathscr{E}}}
\newcommand{\LLL}{{\mathscr{L}}}
\newcommand{\XXX}{{\mathscr{X}}}
\newcommand{\Proj}{\operatorname{Proj}}
\newcommand{\Rat}{\operatorname{Rat}}
\newcommand{\Spec}{\operatorname{Spec}}
\newcommand{\Gal}{\operatorname{Gal}}
\newcommand{\Supp}{\operatorname{Supp}}
\newcommand{\ord}{\operatorname{ord}}
\newcommand{\adeg}{\widehat{\operatorname{deg}}}
\newcommand{\Div}{\operatorname{Div}}
\newcommand{\aDiv}{\widehat{\operatorname{Div}}}
\newcommand{\vol}{\operatorname{vol}}
\newcommand{\avol}{\widehat{\operatorname{vol}}}
\newcommand{\an}{\operatorname{an}}
\newcommand{\Prep}{\operatorname{Prep}}
\newcommand{\Tpsh}{\operatorname{PSH}}
\newcommand{\rest}[2]{\left.{#1}\right\vert_{{#2}}}
\newcommand{\srest}[2]{{#1}\vert_{{#2}}}
\def\query#1{\setlength\marginparwidth{65pt} 
\marginpar{\raggedright\fontsize{7.81}{9} 
\selectfont\upshape\hrule\smallskip 
#1\par\smallskip\hrule}} 
\def\query#1{}
\newcommand{\ad}{\operatorname{a}}
\begin{document}

\title[Algebraic dynamical systems and Dirichlet's unit theorem]%
{Algebraic dynamical systems and \\ Dirichlet's unit theorem on arithmetic varieties}
\author{Huayi Chen}
\address{Universit\'e Grenoble Alpes, Institut Fourier (UMR 5582), F-38402 Grenoble, France}
\email{huayi.chen@ujf-grenoble.fr}
\author{Atsushi Moriwaki}
\address{Department of Mathematics, Faculty of Science,
Kyoto University, Kyoto, 606-8502, Japan}
\email{moriwaki@math.kyoto-u.ac.jp}
\date{\DateTime, (\Version)}
\subjclass[2010]{Primary 14G40; Secondary 11G50, 37P30}
\begin{abstract}
{In this paper,  we study obstructions to the Dirichlet property by two approaches~: density of non-positive points and  functionals on adelic $\mathbb R$-divisors. Applied to the algebraic dynamical systems, these results provide examples of nef adelic arithmetic $\RR$-Cartier divisor 
which does not have the Dirichlet property. We hope the obstructions obtained in the article will give ways toward criteria of the Dirichlet property.
} 
\end{abstract}


\maketitle

\section*{Introduction}
Let $X$ be a projective geometrically integral variety over a number field $K$ and
let $\overline{D} = (D, g)$ be an adelic arithmetic $\RR$-Cartier divisor of $C^0$-type on $X$
(for details of adelic arithmetic divisors, see \cite{MoAdel}).
We say that $\overline{D}$ has {\em the Dirichlet property} if
$\overline{D} + \widehat{(\psi)}$ is effective for some $\psi \in \Rat(X)^{\times}_{\RR} (:= \Rat(X)^{\times} \otimes_{\ZZ} \RR)$. It is clear that, if $\overline D$ has the Dirichlet property, then it is pseudo-effective, namely for any big adelic arithmetic $\mathbb R$-Cartier divisor $\overline E$, the sum $\overline D+\overline E$ is also big.
In \cite{MoD}, the following question has been proposed:

\begin{quote}
If $\overline{D}$ is pseudo-effective, 
does it follow that
$\overline{D}$ has the Dirichlet property?
\end{quote}
In the case where $X=\Spec K$, the pseudo-effectivity actually implies the Dirichlet property. It can be considered as an Arakelov geometry interpretation of the classical Dirichlet unit theorem. Therefore the above problem could be seen as the study of possible higher dimensional generalizations of the Dirichlet unit theorem.

It is know that the above question has a positive answer in the following cases:

\begin{enumerate}
\renewcommand{\labelenumi}{(\arabic{enumi})}
\item
$X = \Spec(K)$ (the classical Dirichlet unit theorem).

\item
$D$ is numerically trivial on $X$ (cf. \cite{MoD,MoAdel}).

\item
$X$ is a toric variety and $\overline{D}$ is of toric type (cf. \cite{BMPS}).
\end{enumerate}
The purpose of this paper is to give a negative answer to the above question and to study the obstructions to the Dirichlet property. We will construct from an algebraic dynamical system over a number field a nef adelic arithmetic Cartier divisor $\overline{D}$ 
which does not have the Dirichlet property.
The obstruction comes from the denseness of the set of preperiodic points with respect to the analytic topology.
More precisely,
let $f : X \to X$ be a surjective endomorphism of $X$ over $K$.
Let $D$ be an ample $\RR$-Cartier divisor on $X$ such that $f^*(D) = dD + (\varphi)$
for some real number $d > 1$ and $\varphi \in \Rat(X)^{\times}_{\RR}$.
Let $\overline{D} = (D, g)$ be the canonical compactification of $D$, that is,
$\overline{D}$ is an adelic arithmetic $\RR$-Cartier divisor of $C^0$-type 
with $f^*(\overline{D}) = d\overline{D} + \widehat{(\varphi)}$ (cf. Section~\ref{sec:canonical:comp}).
Note that $\overline{D}$ is nef (cf. Lemma~\ref{lem:can:comp:nef}). 
The main result of this paper is the following:

\begin{Theorem}[cf. Theorem~\ref{thm:canonical:comp}]
\label{thm:main:result}
If the set of all preperiodic points of $f$ 
is dense on some connected component of  $X(\CC)$  with respect to
the analytic topology \rom{(}the topology as an analytic space\rom{)}, then 
the Dirichlet property of $\overline{D}$ does not hold.
\end{Theorem}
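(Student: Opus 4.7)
The plan is to suppose for contradiction that $\overline{D}$ satisfies the Dirichlet property, and to extract from the forced vanishing of the archimedean Green function on the dense set of preperiodic points a contradiction with the ampleness of $D$. Fix the archimedean embedding $\sigma : K \hookrightarrow \CC$ for which $C = X_\sigma(\CC)$, suppose that $\overline{E} := \overline{D} + \widehat{(\psi)}$ is effective for some $\psi \in \Rat(X)^{\times}_{\RR}$, and let $g_\psi$ denote the local Green function of $\overline{E}$ at $\sigma$. By effectivity, $g_\psi \ge 0$ on $C \setminus \supp(D+(\psi))$, with logarithmic singularities along $\supp(D+(\psi)) \cap C$.

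The first step is to show that $g_\psi$ vanishes at every preperiodic point of $C$ lying off the support. For any preperiodic $P \in X(\overline{K})$, the relation $f^*\overline{D} = d\overline{D} + \widehat{(\varphi)}$ combined with $d>1$ and Tate's telescoping forces the canonical height $h_{\overline{D}}(P)$ to vanish, and the product formula gives $h_{\overline{E}}(P) = h_{\overline{D}}(P) = 0$. The effectivity of $\overline{E}$ makes each local contribution to $h_{\overline{E}}(P)$ non-negative, so all of them, and in particular the archimedean one at $\sigma$, must vanish. Hence $g_\psi$ vanishes at every $\CC$-conjugate of $P$ lying in $C$ whenever $P \notin \supp(D+(\psi))$.

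By the density hypothesis and the fact that $\supp(D+(\psi))$ is a proper Zariski-closed subset of $X$, the preperiodic points lying in $C \setminus \supp(D+(\psi))$ still form a dense subset of $C \setminus \supp(D+(\psi))$. Continuity of $g_\psi$ on $C \setminus \supp(D+(\psi))$ then forces $g_\psi \equiv 0$ there. Logarithmic divergence of $g_\psi$ near any component of $\supp(D+(\psi))$ meeting $C$ would contradict the vanishing at nearby preperiodic points, so $\supp(D+(\psi)) \cap C = \emptyset$.

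Finally, $D + (\psi)$ is an effective $\RR$-divisor defined over $K$; if it were nonzero, its support would be a nonempty $K$-rational codimension-one closed subscheme of $X$, whose base change along $\sigma$ would have nonempty $\CC$-points in $X_\sigma(\CC) = C$, contradicting $\supp(D+(\psi)) \cap C = \emptyset$. Therefore $D + (\psi) = 0$, which makes $D$ $\RR$-linearly equivalent to zero and in particular numerically trivial, contradicting its ampleness. The most delicate point throughout is the passage from pointwise vanishing of $g_\psi$ at algebraic preperiodic points to the absence of any support component meeting $C$: it rests on the careful balance between effectivity (non-negativity of each local contribution), the vanishing of the total height at preperiodic points, and the continuity of $g_\psi$ with its logarithmic singularities.
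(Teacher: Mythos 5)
Your proof is correct, and its engine is the same as the paper's: preperiodic points satisfy $h_{\overline D}=0$ by the functional equation $f^*(\overline D)=d\overline D+\widehat{(\varphi)}$, effectivity of $\overline D+\widehat{(\psi)}$ makes every local Green contribution to the height non-negative, so each one vanishes at preperiodic points off $\Supp(D+(\psi))$ --- this is exactly the Claim inside the proof of Lemma~\ref{lem:non:dense} --- and density plus continuity upgrades the pointwise vanishing to an analytic statement. Where you genuinely diverge is in the packaging and the endgame. The paper encodes the density argument once and for all in the notion of essential support and proves Lemma~\ref{lem:non:dense} at an arbitrary place; Theorem~\ref{thm:canonical:comp} then gets the contradiction from the fact that a subvariety $Y_v$ of dimension $\geq 1$ inside $\Supp_{\mathrm{ess}}(\Prep(f))_v^{\an}$ must meet the support of the ample effective divisor $D+(s)$. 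You instead work only at the archimedean component where density holds, conclude that $\Supp(D+(\psi))$ misses all of $X_\sigma(\CC)$ (your logarithmic-divergence argument is the same mechanism as the continuity of $|s|_{g_\sigma}$ used in Lemma~\ref{lem:non:dense}), hence, being the base change of a divisor defined over $K$, it is empty, so $D+(\psi)=0$ and $D$ would be $\RR$-principal, contradicting ampleness. Your route is more elementary and self-contained for the statement at hand; the paper's route is more general, since Lemma~\ref{lem:non:dense} and Theorem~\ref{thm:canonical:comp} also apply at non-archimedean places and only require a positive-dimensional subvariety inside the essential support rather than density on a whole component (a flexibility used, e.g., in Example~\ref{exam:converse:thm}). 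Two harmless remarks: your density step tacitly (and correctly) uses that a preperiodic point whose image in $X_\sigma(\CC)$ avoids $\Supp(D+(\psi))_\sigma$ is itself not on $\Supp(D+(\psi))$ over $\overline K$; and your final contradiction (ample versus numerically trivial) needs $\dim X\geq 1$, which is implicit in the theorem just as in the paper's proof, since in dimension zero the Dirichlet property does hold.
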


The proof of the theorem relies on a necessary condition of the Dirichlet property established in Lemma \ref{lem:non:dense}. We actually prove that the essential support (see \eqref{Equ:esssupport} for definition) of algebraic points with non-positive heights should not meet the strictly effective locus of an effective section of the adelic arithmetic $\mathbb R$-Cartier divisor.

The concrete examples to apply the above theorem  are discussed in Section~\ref{sec:examples}. 
Even for the algebraic dynamical system as treated in Theorem~\ref{thm:main:result},
it is a very interesting and challenging problem to find a non-trivial sufficient condition to
ensure the Dirichlet property.
Further, in \cite{MoGD}, we introduce a geometric analogue of the above question.
Namely, if $D$ is a pseudo-effective $\QQ$-Cartier divisor on a normal projective variety
defined over a finite field, can we conclude that $D$ is $\QQ$-effective?
It actually holds on a certain kind of an abelian scheme over a curve,
so that the situation of the geometric case is
slightly different from the arithmetic case.

Note that the essential support of a family $S$ of algebraic points is not empty only if the family $S$ is Zariski dense. Therefore, the lemma \ref{lem:non:dense} provides non-trivial necessary conditions for the Dirichlet property only when the set of non-positive points is Zariski dense. In order to treat general adelic arithmetic $\mathbb R$-Cartier divisors, we propose the functional approach. We introduce the notion of asymptotic maximal slope for any adelic arithmetic $\mathbb R$-Cartier divisor on $X$ (see \S\ref{Subsec:asypmaxs} and \S\ref{Sec:extension}), which is the threshold of the Dirichlet property where we consider the twists of the adelic arithmetic $\mathbb R$-Cartier divisor by the pull-back of adelic arithmetic $\mathbb R$-Cartier divisors on $\Spec K$. We prove that this arithmetic invariant also determines the pseudo-effectivity of the adelic arithmetic $\mathbb R$-Cartier divisor (see Proposition \ref{Pro:cripseudoeff}). Therefore the Dirichlet property and the pseudo-effectivity are naturally linked by this numerical invariant. We then obtain a necessary condition of the Dirichlet property in terms of the directional derivative of the asymptotic maximal slope, which is a functional on the space of all adelic arithmetic $\mathbb R$-Cartier divisors (non-necessarily additive \emph{a priori}). For this purpose we establish a general analysis for functionals on the spaces of adelic arithmetic $\mathbb R$-Cartier divisors as in Theorem \ref{Thm:consequenceDirichlet}. This result can be applied to not only the maximal asymptotic slope (see Corollary \ref{Cor:muasy}) but also other natural arithmetic invariants such as the arithmetic volume function (see Corollary \ref{Cor:volder}) and the arithmetic self-intersection number (see Corollary \ref{Cor:suppD}). In \S\ref{Subsec:comparison}, we compare these specifications of Theorem \ref{Thm:consequenceDirichlet}. The comparisons show that the arithmetic maximal slope is particularly adequate in the study of the Dirichlet property of pseudo-effective adelic arithmetic $\mathbb R$-Cartier divisors.

We conclude the article by a refined version of the question above (see Question \ref{Que:refined}). We hope that our work will provide clues for the further research on criteria of the Dirichlet property for higher dimension arithmetic variety.

\medskip
Finally
we would like to express thanks to Prof. Burgos i Gil, 
Prof. Kawaguchi and Prof. Yuan for their valuable and helpful comments.

\renewcommand{\thesubsubsection}{\arabic{subsubsection}}
\subsection*{Conventions and terminology}
In this paper,
we frequently use the same notations as in \cite{MoArZariski} and \cite{MoAdel}.

\subsubsection{}
\label{CV:residue:field}
Let $V$ be a variety over a field $F$ and
$\overline{F}$ an algebraic closure of $F$.
Let $x$ be an $\overline{F}$-valued point of $V$, that is,
a morphism 
\[
x : \Spec(\overline{F}) \to V
\]
over $F$.
The residue field of $V$ at the closed point given by the image of $x$ is denoted by
$F(x)$.

\medskip
In the following,
let $X$ be a projective 
and geometrically integral scheme over a number field $K$. Let $d$ be its Krull dimension.

\subsubsection{}
\label{CV:orbit}
Let $O_K$ be the ring of integers in $K$ and $M_K$ the set of all maximal ideals of $O_K$.
Let $K(\CC)$ be the set of all embeddings $K \hookrightarrow \CC$.
For each $v \in M_K \cup K(\CC)$, we define $K_{v}$ to be
\[
K_{v} := 
\begin{cases}
\text{$K \otimes^{\sigma}_K \CC$ with respect to $\sigma$} & \text{if $v = \sigma \in K(\CC)$},\\
\text{the completion of $K$ at $\mathfrak p$} & \text{if $v = \mathfrak p \in M_K$}.
\end{cases}
\]
Moreover, let $X_{v}$ denote the fiber product $X \times_{\Spec(K)} \Spec(K_{v})$.
Note that $K_{\sigma}$ is naturally isomorphic to $\CC$
via $a \otimes^{\sigma} z \mapsto \sigma(a)z$ and $X_{\sigma}$ is nothing more than
the fiber product $X \times^{\sigma}_{\Spec(K)} \Spec(\CC)$ 
with respect to $\sigma$.

Let $x$ be a $\overline{K}$-valued point of $X$.
Let $\{ \phi_1, \ldots, \phi_n \}$ be the set of all $K_{v}$-algebra homomorphisms 
$K(x) \otimes_K K_{v} \to \overline{K_{v}}$,
where $\overline{K_{v}}$ is an algebraic closure of $K_{v}$.
Note that $n = [K(x) : K]$.
For each $i=1, \ldots, n$, let $w_i$ be the $\overline{K_{v}}$-valued point
of $X_{v}$  
given by the composition of morphisms
\[
\begin{CD}
\Spec(\overline{K_{v}}) @>{\phi_i^a}>> \Spec(K(x) \otimes_K K_{v}) 
@>{x \times \operatorname{id}_{K_{v}}}>> X_{v},
\end{CD}
\]
where $\phi_i^a$ is the morphism of $K_{v}$-schemes induced by $\phi_i$.
We denote $\{ w_1, \ldots, w_n \}$ by $O_{v}(x)$. 

\subsubsection{}
\label{CV:analytication}
For $v \in M_K \cup K(\CC)$,
the analytification $X_{v}^{\an}$ of $X_{v}$ is defined by
\[
X_{v}^{\an} :=
\begin{cases}
X_{\sigma}(K_{\sigma}) & \text{if $v = \sigma \in K(\CC)$}, \\
\text{$X^{\an}_{\mathfrak p}$ in the sense of Berkovich \cite{Be}}
& \text{if $v = \mathfrak p \in M_K$}.
\end{cases}
\]
As $K_{\sigma}$ is naturally identified with $\CC$, 
$X^{\an}_{\sigma} = X_{\sigma}(K_{\sigma}) = X_{\sigma}(\CC)$.
We equip the space $X_{\sigma}^{\an}$ (resp. $X^{\an}_{\mathfrak p}$) with the analytic topology, namely the topology as an analytic space
(resp. as a Berkovich space).
Let $X_{\infty}^{\an}$ denote  the set of $\CC$-valued points of $X$ over $\QQ$.
Note that $X^{\an}_{\infty} = \coprod_{\sigma \in K(\CC)} X_{\sigma}^{\an}$.
We often denote $X^{\an}_{\infty}$ by $X(\CC)$. Note that the complex conjugation induces an involution $F_\infty:X(\mathbb C)\rightarrow X(\mathbb C)$.

\subsubsection{}
\label{CV:analytic:point}
Let us fix $v \in M_K \cup K(\CC)$. For a $\overline{K_{v}}$-valued point $w$ of $X_{v}$,
we define $w^{\an} \in X_{v}^{\an}$ to be
\[
w^{\an} :=
\begin{cases}
w & \text{if $v = \sigma \in K(\CC)$}, \\
\text{the valuation of $K_{\mathfrak p}(w)$
as an extension of $v_{\mathfrak p}$}
& \text{if $v = \mathfrak p \in M_K$},
\end{cases}
\]
where $v_{\mathfrak p}$ is the valuation of $K_{\mathfrak p}$ defined as $v_{\mathfrak p}(f)=\#(O_K/{\mathfrak p})^{-\ord_{\mathfrak p}(f)}$.
Note that 
\[
\#\{ w' \in X_{v}(\overline{K_{v}}) \mid {w'}^{\an} = w^{\an} \} = [K_{v}(w) : K_{v}].
\]

\subsubsection{}\label{CV:adelic:arith:div}
Let $\Div(X)$ be the group of Cartier divisors on $X$ and denote by $\Div_{\mathbb R}(X)$ the $\mathbb R$-vector space $\Div(X)\otimes_{\mathbb Z}\mathbb R$. If $\varphi$ is an element in $\Rat(X)^{\times}$, we denote by $(\varphi)$ its divisor, which is an element in $\Div(X)$. The Cartier divisors on $X$ constructed in this way are called \emph{principal divisors}. The map $\varphi\mapsto (\varphi)$ is a group homomorphism and extends by extension of scalar to an $\mathbb R$-linear map $\Rat(X)^{\times}_{\mathbb R}\rightarrow\Div_{\mathbb R}(X)$, where $\Rat(X)^{\times}_{\RR} := \Rat(X)^{\times} \otimes_{\ZZ} \RR$.

By an \emph{adelic arithmetic $\mathbb R$-Cartier divisor} of $C^0$-type on $X$, we refer to a pair $\overline D=(D,g)$, where $D\in\Div_{\mathbb R}(X)$ and $g=(g_{v})_{v\in M_K\cup K(\mathbb C)}$ is a family of Green functions, with $g_{v}$ being a $D$-Green function of $C^0$-type on $X_{v}^{\mathrm{an}}$. We also require that $g_{\mathfrak p}$ comes from an integral model of $D$ for all but a finite number of $\mathfrak p\in M_K$, and that the family $(g_\sigma)_{\sigma\in K(\mathbb C)}$ is invariant under the action of $F_\infty$. The family $g = (g_{v})_{v \in M_K \cup K(\CC)}$ is often denoted by
\[
\sum_{v \in M_K \cup K(\CC)} g_{v} [v].
\] 
If it is not specified, an adelic arithmetic $\mathbb R$-Cartier divisor refers to an adelic arithmetic $\mathbb R$-Cartier divisor of $C^0$-type. We denote the vector space consisting of adelic arithmetic $\RR$-Cartier divisors on $X$ by
$\aDiv_{\RR}(X)$. If $\varphi$ is an element in $\Rat(X)^{\times}$, we define an adelic arithmetic $\mathbb R$-Cartier divisor as follows
\[\widehat{(\varphi)}:=\Big((\varphi),\sum_{v\in M_K\cup K(\mathbb C)} -\log|\varphi_v|^2_{v}\;[v] \Big),\]
where $\varphi_{v}$ is the rational function on $X_{v}^{\mathrm{an}}$ induced by $\varphi$. The map $\Rat(X)^{\times}\rightarrow\widehat{\Div}_{\mathbb R}(X)$ extends naturally to $\Rat(X)^{\times}_{\mathbb R}$ and defines an $\mathbb R$-linear homomorphism of vector spaces. Any element in the image of this $\mathbb R$-linear map is called a \emph{principal} adelic arithmetic $\mathbb R$-Cartier divisor. 

For any $v \in M_K\cup K(\mathbb C)$, one has a natural embedding from the vector space $C^0(X_{v}^{\mathrm{an}})$ into $\widehat{\mathrm{Div}}_{\mathbb R}(X)$ which sends $f_{v}\in C^0(X_{v}^{\mathrm{an}})$ to 
\[
\begin{cases}
\Big(  0, f_{v} [v] \Big) & \text{if $v \in M_K$}, \\[2ex]
\Big(  0, \frac{1}{2} f_{v} [v] + \frac{1}{2} F_{\infty}^*(f_{v}) [\bar{v}] \Big) & \text{if $v \in K(\CC)$}.
\end{cases}
\]
We denote by $\overline O(f_{v})$ this adelic arithmetic $\mathbb R$-Cartier divisor.

In the particular case where $X=\Spec K$, an adelic arithmetic $\mathbb R$-Cartier divisor $\zeta$ on $\Spec K$ is a vector $(\zeta_{v})_{v\in M_K\cup K(\mathbb C)}$ in $\mathbb R^{M_K\cup K(\mathbb C)}:=\mathrm{Map}(M_K\cup K(\mathbb C),\mathbb R)$ which we can write into the form of a formal sum \[\sum_{v\in M_K\cup K(\mathbb C)}\zeta_{v}[v],\]
where $\zeta_{v}=0$ for all but a finite number of indices $v$.
The \emph{Arakelov degree} of $\zeta$ is then defined as
\[\widehat{\deg}(\zeta)=\frac 12\sum_{v\in M_K\cup K(\mathbb C)}\zeta_{v}.\]
For more details of $\adeg(.)$, see \cite[SubSection~4.2]{MoAdel}

\subsubsection{}
\label{CV:height}
Let $\overline{D} = (D, g)$ be an adelic arithmetic $\RR$-Cartier divisor of $C^0$-type on $X$.
For any algebraic point $x$ of $X$ outside the support of $D$, the \emph{normalized height} $h_{\overline{D}}(x)$ of $x$ with respect to $\overline{D}$ is defined to be
\[
h_{\overline{D}}(x) := \frac{\adeg(\srest{\overline{D}}{x})}{[K(x): K]}
= \frac{1}{[K(x):K]} \sum_{v \in M_K \cup K(\CC)} \sum_{w \in O_{v}(x)} 
\frac{1}{2} \ g_{v}(w^{\an}).
\]
This function can be extended to the set of all points in $X(\overline K)$, see \cite[\S4.2]{MoAdel} for details.
A $\overline{K}$-valued point $x$ of $X$ is said to be {\em non-positive with respect to $\overline{D}$}
if $h_{\overline{D}}(x) \leq 0$. Note that the height function $h_{\overline D}(.)$ does not change if we replace $\overline D$ by $\overline D+\widehat{(\phi)}$ with $\phi\in\Rat(X)_{\mathbb R}^{\times}$. This is a consequence of the product formula for the number field $K$.

For any real number $\lambda$, we denote by $X(\overline{K})^{\overline{D}}_{\leq \lambda}$  the set of all
$\overline{K}$-valued points of $X$ whose height with respect to $\overline D$ is bounded from above by $\lambda$, namely
\[
X(\overline{K})^{\overline{D}}_{\leq \lambda} := \{ x \in X(\overline{K}) \mid h_{\overline{D}}(x) \leq \lambda \}.
\]
The \emph{essential minimum} of the height function $h_{\overline D}(\cdot)$ is defined as 
\[\widehat{\mu}_{\mathrm{ess}}(\overline D):=\inf\{\lambda\in\mathbb R\,|\,X(\overline K)^{\overline D}_{\leq\lambda}\text{ is Zariski dense}\}.\] 
The function $\widehat{\mu}_{\mathrm{ess}}(.)$ takes value in $\mathbb R\cup\{-\infty\}$.
Note that if $\overline D$ verifies the Dirichlet property, then the essential minimum of $h_{\overline D}(.)$ is non-negative.
\subsubsection{}\label{CV:measure}
We say that an adelic arithmetic $\mathbb R$-Cartier divisor $\overline D=(D,g)$ is \emph{relatively nef} if $D$ is a nef $\mathbb R$-Cartier divisor and $g_{v} $ is of $(C^0\cap\mathrm{PSH})$-type (for details, see \cite[\S~2.1]{MoAdel}). To each family $(\overline D_i)_{i=1}^{d+1}$ of relatively nef adelic arithmetic $\mathbb R$-Cartier divisor one can associate a real number $\widehat{\deg}(\overline D_1\cdots\overline D_{d+1})$ as in \cite[\S4.5]{MoAdel}\footnote{The smoothness condition for the scheme $X$ in \emph{loc. cit.} is actually not necessary.}. The intersection number function $(\overline D_1,\ldots,\overline D_{d+1})\mapsto\widehat{\deg}(\overline D_1,\ldots,\overline D_{d+1})$ is symmetric, additive and $\mathbb R_+$-homogeneous in each coordinate and hence extends to a $(d+1)$-linear form on the vector space of integrable\footnote{Recall that an adelic arithmetic $\mathbb R$-Cartier divisor is said to be \emph{integrable} if it can be written as the difference of two relatively nef adelic arithmetic $\mathbb R$-Cartier divisors.} adelic arithmetic $\mathbb R$-Cartier divisors. The extended function is continuous in each of its coordinates with respect to the topology on the vector space of all integrable adelic arithmetic $\mathbb R$-Cartier divisors defined by the usual convergence in each of its finite dimensional vector subspaces and the uniform convergence of Green functions. Therefore, for fixed integrable adelic arithmetic $\mathbb R$-Cartier divisors $\overline D_1,\ldots,\overline D_{d}$, the function $\overline D_{d+1}\mapsto\widehat{\deg}(\overline D_1,\ldots,\overline D_{d+1})$ can be extended by continuity to the whole vector space of adelic arithmetic $\mathbb R$-Cartier divisors on $X$.  

Let $\overline D_1,\ldots,\overline D_d$ be relatively nef adelic arithmetic $\mathbb R$-Cartier divisors. The intersection product defines a (non-negative) Radon measure $(\overline D_1\cdots\overline D_d)_v$ on $X^{\mathrm{an}}_{v}$ for each place $v\in M_K\cup K(\mathbb C)$ such that, for any $\phi\in C^0(X_{v}^{\mathrm{an}})$ one has
\[(\overline D_1\cdots\overline D_d)_v(\phi):=\widehat{\deg}(\overline D_1\cdots\overline D_d\cdot\overline O(\phi)).\]
More generally one can define a signed Borel measure $(\overline D_1\cdots\overline D_d)_v$ on $X_v^{\mathrm{an}}$ for integrable adelic arithmetic $\mathbb R$-Cartier divisors $\overline D_1,\ldots,\overline D_d$. Moreover, this signed measure is multi-linear in $\overline D_1,\ldots,\overline D_d$.  Note that in the case where $\overline D_1,\ldots,\overline D_d$ come from  adelic line bundles and $v$ is a non-archimedean place, the above measure has been constructed in \cite{CL06} (the archimedean case is more classical and relies on the theory of Monge-Amp\`ere operators). See \S\ref{Sec:intersectionnumber} \emph{infra.} for the integrability of Green functions with respect to this measure extending some results of \cite{Maillot00,CL_Thuillier}.

Let $r$ be an integer in $\{0,\ldots,d\}$ and  $\overline D_1,\ldots,\overline D_{r+1}$ be a family of integrable adelic arithmetic $\mathbb R$-Cartier divisors. If $Z$ is an $\mathbb R$-coefficient algebraic cycle of dimension $r$ in $X$, written into the linear combination of prime cycles as 
\[Z=\lambda_1Z_1+\cdots+\lambda_nZ_n.\]
Then we define the height of $Z$ with respect to $\overline D_1,\ldots,\overline D_{r+1}$ as 
\[h(\overline D_1,\ldots,\overline D_{r+1};Z):=\sum_{i=1}^n\lambda_i\widehat{\deg}(\overline D_1|_{Z_i}\cdots\overline D_{r+1}|_{Z_i}).\]
In the particular case where all $\overline D_i$ are equal to the same adelic arithmetic $\mathbb R$-Cartier divisor $\overline D$, we write $h(\overline D_0,\ldots,\overline D_r;Z)$ in abbreviation as $h(\overline D;Z)$. 
Note that when $Z$ is the algebraic cycle corresponding to a closed point $x$ of $X$, the height $h(\overline D;Z)$ equals $[K(x):K]h_{\overline D}(x)$. 

We say that an adelic arithmetic $\mathbb R$-Cartier divisor $\overline D=(D,g)$ is \emph{nef} if it is relatively nef and if the function $h_{\overline D}(\cdot)$ is non-negative. In the case where $\overline D$ is nef, the function $h(\overline D;\cdot)$ is non-negative on effective cycles.

\subsubsection{}\label{CT:effectivesection}
Let $\overline D$ be an adelic arithmetic $\mathbb R$-Cartier divisor on $X$. if $s \in \Rat(X)^{\times}_{\RR}$ and $D + (s) \geq 0$, then $|s|_{g_{v}} := |s|_{v} \exp(-g_{v}/2)$ is a continuous function on $X^{\an}_{v}$, where $v\in M_K\cup K(\mathbb C)$;
$|s|_{g_{v}} \leq 1$ for all $v$ if and only if $\overline{D} + \widehat{(s)} \geq 0$.

We denote by $H^0(X,D)$ the $K$-vector space 
\[\{\phi\in\Rat(X)^{\times}\,|\,D+(\phi)\geq 0\}\cup\{0\}\]
Assume that $s$ is an element in $H^0(X,D)$. For each $v\in M_K\cup K(\mathbb C)$, the Green function $g_{v}$ defines a continuous function $|s|_{g_{v}}$ such that \[|s|_{g_{v}}=|s|_{v}\exp(-g_{v}/2).\] This function vanishes on the locus of $\mathrm{div}(s)+D$. We also define
\[\|s\|_{v,\sup}:=\sup_{x\in X_{v}^{\mathrm{an}}}|s|_{v}(x).\]
Denote by $\hat{H}^0(X,\overline D)$ the set of all $s\in H^0(X,D)$ such that $\|s\|_{v,\sup}\leq 1$ for any $v\in M_K\cup K(\mathbb C)$. The \emph{arithmetic volume} of $\overline D$ is defined as 
\[\widehat{\vol}(\overline D):=\limsup_{n\rightarrow+\infty}\frac{\log\#\widehat{H}^0(X,n\overline D)}{n^{d+1}/(d+1)!}.\]

\section{Density of non-positive points}

This section is devoted to a non-denseness result for non-positive points under the Dirichlet property. This result will be useful in the following sections to construct counter-examples to the Dirichlet property. We fix a projective and geometrically integral scheme $X$ defined over a number field $K$. 

Let $S$ be a subset of $X(\overline{K})$.
For a proper subscheme $Y$ of $X$ and $v \in M_K \cup K(\CC)$,
we set
\[
\Delta(S;Y)^{\an}_{v} := \bigcup\nolimits_{x \in S \setminus Y(\overline{K})} 
\{ w^{\an} \mid w \in O_{v}(x) \}.
\]
The essential support
$\Supp_{\mathrm{ess}}(S)_{v}^{\an}$
of $S$ at $v$ is defined to be
\begin{equation}\label{Equ:esssupport}
\Supp_{\mathrm{ess}}(S)_{v}^{\an} := \bigcap_{Y \subsetneq X} 
\overline{\Delta(S; Y)^{\an}_{v}},
\end{equation}
where $\overline{\Delta(S;Y)^{\an}_{v}}$ is the closure of $\Delta(S;Y)^{\an}_{v}$ with respect to
the analytic topology.
Note that if $\Delta(S;\emptyset)^{\an}_{v}$ is dense with respect to
the analytic topology, then 
\[
\Supp_{\mathrm{ess}}(S)_{v}^{\an}
= X^{\an}_{v}.
\]
Moreover, if $S$ is not Zariski dense, then $\Supp_{\mathrm{ess}}(S)_{v}^{\an} = \emptyset$.

\begin{Lemma}[Non-denseness of non-positive points]
\label{lem:non:dense}
Let $\overline{D} = (D, g)$ be an adelic arithmetic $\RR$-Cartier divisor of $C^0$-type on $X$.
If $s$ is an element of $\Rat(X)^{\times}_{\RR}$ with
$\overline{D} + \widehat{(s)} \geq 0$, then
\[
\Supp_{\mathrm{ess}}\big(X(\overline{K})^{\overline{D}}_{\leq 0}\big)_{\!v}^{\!\an} \cap
\{ x \in X^{\an}_{v} \mid \vert s \vert_{g_{v}}(x) < 1 \} = \emptyset
\]
for all $v \in M_K \cup K(\CC)$.
In particular, if $\Supp(D + (s)) \not= \emptyset$,
then $\Delta\big(X(\overline{K})^{\overline{D}}_{\leq 0}; \emptyset\big)^{\!\an}_{\!v}$ is not dense with respect to
the analytic topology.
\end{Lemma}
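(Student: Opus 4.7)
The plan is to normalize so that the hypothesis $\overline D+\widehat{(s)}\geq 0$ makes every local Green function non-negative, and then to turn the existence of a non-positive point with a conjugate near $x_{0}$ into an arithmetic contradiction via the defining sum formula for the height.

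First I set $\overline D':=\overline D+\widehat{(s)}$, whose Green functions are $g'_{v}=-2\log|s|_{g_{v}}$. By hypothesis $g'_{v}\geq 0$ everywhere on $X_{v}^{\an}$, and by the product formula recalled in \S\ref{CV:height} one has $h_{\overline D}=h_{\overline D'}$, so in particular $X(\overline K)^{\overline D}_{\leq 0}=X(\overline K)^{\overline D'}_{\leq 0}$. Note that the open locus $\{|s|_{g_{v}}<1\}$ is exactly $\{g'_{v}>0\}$.

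Next I argue by contradiction: assume $x_{0}$ lies in the intersection. Continuity of $g'_{v}$ yields an open neighborhood $U\subset X_{v}^{\an}$ of $x_{0}$ and a constant $\delta>0$ with $g'_{v}\geq 2\delta$ on $U$. Taking $Y:=\Supp(D+(s))$, which is a proper closed subscheme of $X$ (the case $D+(s)=0$ is vacuous, since then $\{|s|_{g_{v}}<1\}=\emptyset$), the definition \eqref{Equ:esssupport} of the essential support forces $x_{0}\in\overline{\Delta(X(\overline K)^{\overline D}_{\leq 0};Y)^{\an}_{v}}$, and hence produces some $x\in X(\overline K)^{\overline D}_{\leq 0}\setminus Y(\overline K)$ together with $w\in O_{v}(x)$ such that $w^{\an}\in U$. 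Since $x\notin Y$, every term in the height sum of \S\ref{CV:height} applied to $\overline D'$ is well-defined and non-negative, while the term indexed by the chosen pair $(v,w)$ is at least $\delta$; this gives
\[h_{\overline D}(x)=h_{\overline D'}(x)\geq \frac{\delta}{[K(x):K]}>0,\]
contradicting $h_{\overline D}(x)\leq 0$.

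For the ``in particular'' clause, suppose $\Supp(D+(s))\neq\emptyset$; then any point $z$ in this support provides $z^{\an}\in X_{v}^{\an}$ with $|s|_{g_{v}}(z^{\an})=0<1$. If $\Delta(X(\overline K)^{\overline D}_{\leq 0};\emptyset)^{\an}_{v}$ were dense in $X_{v}^{\an}$, then for every proper subscheme $Y$ of $X$ the set $\Delta(X(\overline K)^{\overline D}_{\leq 0};Y)^{\an}_{v}$ would still be dense, because the points removed in passing from $\emptyset$ to $Y$ all lie in the nowhere-dense analytic subset $Y_{v}^{\an}\subset X_{v}^{\an}$ (here we use that $X$ is geometrically integral, so $X_{v}$ is integral and $Y_{v}^{\an}$ is nowhere dense). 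Hence $\Supp_{\mathrm{ess}}(X(\overline K)^{\overline D}_{\leq 0})^{\an}_{v}=X_{v}^{\an}\ni z^{\an}$, contradicting the first part of the lemma.

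The main obstacle I anticipate is purely bookkeeping: since $s\in\Rat(X)^{\times}_{\mathbb R}$ rather than $\Rat(X)^{\times}$, one must interpret $|s|_{v}$ and $|s|_{g_{v}}$ as positive-real-exponent combinations and verify that the decomposition of $h_{\overline D'}$ into non-negative local contributions indexed by $(v',w')$ remains valid for those $\overline K$-points of $X$ lying outside $\Supp(D+(s))$. Once the conventions of \S\ref{CV:adelic:arith:div}--\S\ref{CT:effectivesection} are in force, the argument reduces to the localization step sketched above.
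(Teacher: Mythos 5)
Your proof is in substance the paper's own argument: you pass to $\overline D{}' = \overline D + \widehat{(s)}$ with $g'_v = -\log|s|_{g_v}^2 \geq 0$, use the invariance $h_{\overline D}=h_{\overline D{}'}$, and observe that a point of $\Delta\big(X(\overline K)^{\overline D}_{\leq 0};\Supp(D+(s))\big)^{\an}_v$ lying where $g'_v$ is bounded below by a positive constant would give some $x$ with $h_{\overline D}(x)\leq 0$ a strictly positive height, because all local terms in the decomposition of $h_{\overline D{}'}(x)$ are non-negative. The paper phrases the same computation as ``$g'_v$ vanishes on $\Delta(S;Y)^{\an}_v$'' followed by a sequence-and-continuity step; your $\delta$-neighbourhood version is equivalent. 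Your treatment of the ``in particular'' clause (density of $\Delta(S;\emptyset)^{\an}_v$ forces density of every $\Delta(S;Y)^{\an}_v$ since the discarded points lie in the closed, nowhere dense set $Y_v^{\an}$, whence the essential support is all of $X_v^{\an}$ and meets the locus where $|s|_{g_v}=0$) is a correct proof of the remark the paper states without proof just before the lemma.

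One parenthetical is wrong: from $D+(s)=0$ it does not follow that $\{|s|_{g_v}<1\}=\emptyset$. In that case $\overline D+\widehat{(s)}=\overline O(g')$ with $g'_v$ a non-negative continuous function that may be strictly positive on a large set (for instance $\overline D=\overline O(f_\sigma)$ with $f_\sigma\geq 0$ non-zero and $s=1$), so the locus $\{g'_v>0\}$ need not be empty and the case is not vacuous. Fortunately no case distinction is required: your main argument runs unchanged with $Y=\Supp(D+(s))=\emptyset$, since the essential support is in any case contained in $\overline{\Delta\big(X(\overline K)^{\overline D}_{\leq 0};\emptyset\big)^{\an}_v}$ and, the support of $D+(s)$ being empty, every $x\in X(\overline K)^{\overline D}_{\leq 0}$ admits the non-negative local height decomposition. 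Deleting the parenthetical (or replacing it by this observation) repairs the write-up; everything else is fine.
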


\begin{proof}
We set $S := X(\overline{K})^{\overline{D}}_{\leq 0}$, $Y := \Supp(D + (s))$ and
$g'_{v} := -\log \vert s \vert_{g_{v}}^2$.
By our assumption,
$g'_{v} \geq 0$ for all $v \in M_K \cup K(\CC)$.

\begin{Claim}
For all $y \in \Delta(S;Y)^{\an}_{v}$, 
we have $g'_{v}(y) = 0$.
\end{Claim}

\begin{proof}
For $y \in \Delta(S;Y)^{\an}_{v}$,
we choose $x \in S \setminus Y(\overline{K})$
such that $y = w^{an}$ for some
$w \in O_{v}(x)$.
Then,
\[
0 \geq 2 [K(x) : K] h_{\overline{D}}(x) =
2 \adeg(\srest{\overline{D} + \widehat{(s)}}{x}) =
\sum_{v' \in M_K \cup K(\CC)} \sum_{w' \in O_{v'}(x)} g'_{v'}({w'}^{\an}).
\]
As $g'_{v'} \geq 0$ for all $v' \in M_K \cup K(\CC)$, the assertion follows.
\end{proof}

We assume that 
$\Supp_{\mathrm{ess}}(S)_{v}^{\an} \cap
\{ x \in X^{\an}_{v} \mid \vert s \vert_{g_{v}}(x) < 1 \} \not= \emptyset$.
In particular,
\[
\overline{\Delta(S;Y)^{\an}_{v}} \cap
\{ x \in X^{\an}_{v} \mid \vert s \vert_{g_{v}}(x) < 1 \} \not= \emptyset.
\]
We can choose $y_{\infty} \in X^{\an}_{v}$ and
a sequence $\{ y_m \}$ in $X^{\an}_{v}$ such that
$\vert s \vert_{g_{v}}(y_{\infty}) < 1$, 
$y_m \in \Delta(S;Y)^{\an}_{v}$
for all $m$ and
$\lim_{m\to\infty} y_m = y_{\infty}$. 
By the above claim, $\vert s \vert_{g_{v}}(y_m) = 1$ for all $m$, and hence
$\vert s \vert_{g_{v}}(y_{\infty}) = \lim_{m\to\infty} \vert s \vert_{g_{v}}(y_m) = 1$.
This is a contradiction.
\end{proof}

\begin{Remark}
\label{rem:ess:support:dynamics}
Let $f : \PP^1_K \to \PP^1_K$ be a surjective endomorphism over $K$ with
$\deg(f) \geq 2$. Let $S$ be the set of periodic points in $\PP^1(\overline{K})$.
Fix $\sigma \in K(\CC)$. By Lemma~\ref{lem:preperiodic:dim:0},
$\Delta(S;\emptyset)_{\sigma}^{\an}$ coincides with
the set of periodic points of $f_{\sigma}$ in $(\PP^1_K)_{\sigma}^{\an}$.
The following are  well-known:
\begin{enumerate}
\renewcommand{\labelenumi}{(\arabic{enumi})}
\item
The closure of the set of repelling periodic points is the Julia set $J_{\sigma}$ of $f_{\sigma}$
(\cite[Theorem~4.2.10]{HolDyn}).

\item
The set of non-repelling periodic points is a finite set (\cite[Theorem~4.2.9]{HolDyn}).

\item
The Julia set $J_{\sigma}$ is closed and perfect, that is, $J_{\sigma}$ is closed and $J_{\sigma}$ has no isolated points in $J_{\sigma}$ (\cite[Theorem~2.3.6]{HolDyn}).
\end{enumerate}
Therefore, we can see that the essential support $\Supp_{\mathrm{ess}}(S)^{\an}_{\sigma}$
of $S$ at $\sigma$ is equal to the Julia set $J_{\sigma}$.
\end{Remark}

For a subset $S$ of $X(\overline{K})$ and $v \in M_K \cup K(\CC)$,
we set $S_v = \bigcup_{x \in S} O_v(x)$.
Let us consider a way to give the essential support of $S$ at $v$ in terms of $S_v$ and $X_v$.

\begin{Proposition}
\label{prop:essential:supp:v}
$\Supp_{\mathrm{ess}}(S)^{\an}_{v} = \bigcap_{Y_v \subsetneq X_v} \overline{\{ w^{\an} \mid w \in S_v \setminus Y_v(\overline{K_v}) \}}$,
where $Y_v$ runs over all proper subschemes of $X_v$.
\end{Proposition}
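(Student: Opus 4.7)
The strategy is to prove both inclusions separately, writing $\pi \colon X_v \to X$ for the natural projection. The easier direction $\supseteq$ follows from a base-change observation: for any proper closed subscheme $Y \subsetneq X$, the base change $Y_v := Y \times_K K_v$ is a proper closed subscheme of $X_v$ (faithful flatness of $K_v/K$ keeps the nonzero defining ideal nonzero). Using the equivalence $\bar w \in Y_v \Leftrightarrow \bar x = \pi(\bar w) \in Y \Leftrightarrow x \in Y(\overline{K})$ valid for $w \in O_v(x)$, one checks that $\Delta(S;Y)^{\an}_v = \{w^{\an} : w \in S_v \setminus Y_v(\overline{K_v})\}$. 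Hence $\Supp_{\mathrm{ess}}(S)^{\an}_v$ is an intersection over a subfamily of the family appearing on the right hand side, yielding $\Supp_{\mathrm{ess}}(S)^{\an}_v \supseteq$ right hand side.

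For the reverse inclusion I argue by contradiction. Assume $y \in \Supp_{\mathrm{ess}}(S)^{\an}_v$ but $y \notin \overline{\{w^{\an} : w \in S_v \setminus Y_v(\overline{K_v})\}}$ for some $Y_v \subsetneq X_v$. Choose an open neighborhood $U$ of $y$ in $X^{\an}_v$ such that every $w \in S_v$ with $w^{\an} \in U$ satisfies $\bar w \in Y_v$. It then suffices to produce a proper closed subscheme $Y \subsetneq X$ with $\pi(Y_v) \cap X^{\mathrm{cl}} \subseteq Y(\overline{K})$: for any $x \in S$ having some $w \in O_v(x)$ with $w^{\an} \in U$, the hypothesis gives $\bar w \in Y_v$, so $\bar x = \pi(\bar w)$ lies in $\pi(Y_v) \cap X^{\mathrm{cl}} \subseteq Y$, forcing $x \in Y(\overline{K})$. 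Consequently $U \cap \Delta(S;Y)^{\an}_v = \emptyset$, contradicting $y \in \overline{\Delta(S;Y)^{\an}_v}$.

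The crux is therefore the following descent lemma: \emph{$\pi(Y_v) \cap X^{\mathrm{cl}}$ is not Zariski dense in $X$ for any proper $Y_v \subsetneq X_v$.} I would prove this locally on an affine open $U = \Spec A \subseteq X$, where $Y_v \cap U_v = V(J)$ for a nonzero ideal $J \subset A \otimes_K K_v$ (nonzero because $X_v$ is irreducible, hence $Y_v \cap U_v \subsetneq U_v$). Fix an embedding $\overline{K} \hookrightarrow \overline{K_v}$ and pick any nonzero $f \in J$. Expressing $f$ in a $K$-basis of its $K_v$-coefficients and then refining to an $\overline{K}$-basis $\{c_j\}$ of the $\overline{K}$-span of those coefficients inside $\overline{K_v}$, one obtains $f = \sum_j c_j Q_j$ with $Q_j \in \overline{K} \otimes_K A$ and the $c_j$ $\overline{K}$-linearly independent in $\overline{K_v}$. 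For any $\beta \in U(\overline{K})$ with $f(\beta) = 0$ in $\overline{K_v}$, this linear independence forces $Q_j(\beta) = 0$ for every $j$. Since $f \neq 0$ some $Q_{j_0}$ is nonzero; its coefficients lie in a finite extension $L/K$, and the Galois norm $N_{L/K}(Q_{j_0}) \in A$ is a nonzero regular function on $U$ vanishing at every such $\beta$. Its zero locus is a proper $K$-subvariety of $U$ containing $\pi(Y_v) \cap U^{\mathrm{cl}}$; gluing closures in $X$ over a finite affine cover produces the required $Y$. The main obstacle is the descent lemma itself: a naive choice of $Y$ as the scheme-theoretic image $\pi(Y_v)$ can equal all of $X$ even when $\dim Y_v < \dim X$ (for example $V(t - \alpha) \subset \mathbb{A}^2_{\CC}$ with $\alpha \in \CC$ transcendental over $K$ is dominant over $\mathbb{A}^2_K$), and Chevalley-type descent is unavailable since $\pi$ is not of finite type; the Galois-norm trick applied to the $\overline{K}$-rational solutions of $f=0$ is the substitute.
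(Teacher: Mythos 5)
Your proposal is correct and is essentially the paper's own argument: in both cases the heart is to take a nonzero element $f$ of the ideal of $Y_v$ on an affine chart, write it as $\sum_j c_j Q_j$ with the $Q_j$ defined over $\overline{K}$ and the $c_j$ linearly independent over the algebraic closure of $K$ inside $\overline{K_v}$, deduce that the $Q_j$ vanish at the relevant algebraic points, and then descend to $K$ by a Galois norm/product (the paper's $f=\prod_{\sigma}\sigma(h)$). The differences are only packaging (your standalone descent lemma for $\pi(Y_v)\cap X^{\mathrm{cl}}$, a finite affine cover, and a neighborhood-based contradiction, versus the paper's single chart $X\setminus\Supp(D)$ and a direct set inclusion), together with one step you leave implicit but which is routine in your setup: a closed point of $X$ hit by $Y_v$ only furnishes \emph{some} embedding of its residue field into $\overline{K_v}$ killing $f$, and one must transport it back through the fixed embedding $\overline{K}\hookrightarrow\overline{K_v}$ to obtain an algebraic point $\beta$ over that closed point with $f(\beta)=0$ (this is precisely the role of the homomorphisms $\iota_w$ in the paper), after which the norm, being defined over $K$, vanishes at the closed point itself.
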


\begin{proof}
It is sufficient to show that, for a proper subschemes $Y_v$ of $X_v$, there is a
proper subscheme $Y$ of $X$ such that
\[
\bigcup\nolimits_{x \in S \setminus Y(\overline{K})} O_v(x)  \subseteq S_v \setminus Y_v(\overline{K_v}),
\]
that is, $S_v \cap Y_v(\overline{K_v}) \subseteq \bigcup_{x \in S \cap Y(\overline{K})} O_v(x)$.

Let $\pi : X_v \to X$ be the projection.
For $x \in X(\overline{K})$ and $w \in X_v(\overline{K_v})$,
the natural induced morphisms 
\[
\Spec(\overline{K}) \to X_{\overline{K}}
\quad\text{and}\quad
\Spec(\overline{K_v}) \to (X_{v})_{\overline{K_v}}
\]
are denoted by
$\tilde{x}$ and $\tilde{w}$, respectively,
where 
\[
X_{\overline{K}} = X \times_{\Spec(K)} \Spec(\overline{K})
\quad\text{and}\quad
(X_{v})_{\overline{K_v}} = X_v \times_{\Spec(K_v)} \Spec(\overline{K_v}).
\]
We fix an embedding $\overline{K} \hookrightarrow \overline{K_v}$, which yields 
a morphism $\bar{\pi} : (X_v)_{\overline{K_v}} \to X_{\overline{K}}$.
In the case where $w \in O_v(x)$, there is a homomorphism $\iota_w : \overline{K} \to \overline{K_v}$ over $K$
such that the following diagram is commutative:
\[
\begin{CD}
\Spec(\overline{K_v}) @>{\tilde{w}}>> (X_{v})_{\overline{K_v}} \\
@V{\iota_w^a}VV @VV{\bar{\pi}}V \\
\Spec(\overline{K}) @>{\tilde{x}}>> X_{\overline{K}}
\end{CD}
\]
Let $\widetilde{K}$ be the algebraic closure of $K$ in $\overline{K_v}$.
Note that $\iota_w(\overline{K}) = \widetilde{K}$.

Let $D$ be a Cartier divisor on $X$ such that $X^{\circ} := X \setminus \Supp(D)$ is affine.
Let $A$ be a finitely generated $K$-algebra with $X^{\circ} = \Spec(A)$.
Note that 
\[
S_v \cap \Supp(D)_v(\overline{K_v}) = \bigcup\nolimits_{x \in S \cap \Supp(D)(\overline{K})} O_v(x).
\]
If $Y_v \subseteq \Supp(D)_v$, then the assertion is obvious, so that we may assume that
$Y_v \not\subseteq \Supp(D)_v$. 
We put $T = S_v \cap (Y_v(\overline{K_v}) \setminus \Supp(D)_v(\overline{K_v}))$.

\begin{Claim}
There is a non-zero $h \in A \otimes_{K} \overline{K}$ such that $\tilde{w}^*(\bar{\pi}^*(h)) = 0$ for all $w \in T$.
\end{Claim}

\begin{proof}
Let $I_v$ be the ideal of $A \otimes_K K_v$ defining
$Y_v \cap X_v^{\circ}$.
Choose a non-zero element $h'$ of $I_v$. There are $h_1, \ldots, h_r \in A \otimes_K \overline{K}$ and
$a_1, \ldots, a_r \in \overline{K_v}$ such that
\[
h' = a_1 \bar{\pi}^*(h_1) + \cdots + a_r \bar{\pi}^*(h_r)
\]
and $a_1, \ldots, a_r$ are linearly independent over $\widetilde{K}$.
For $w \in T$ and $w \in O_v(x)$, by using the above diagram,
\begin{multline*}
0 = \tilde{w}^*(h') = a_1 \tilde{w}^*(\bar{\pi}^*(h_1)) + \cdots + a_r \tilde{w}^*(\bar{\pi}^*(h_r)) \\
=
a_1 \iota_w(\tilde{x}^*(h_1)) + \cdots + a_r \iota_w(\tilde{x}^*(h_r)),
\end{multline*}
so that $\tilde{x}^*(h_1) = \cdots = \tilde{x}^*(h_r) = 0$.
Therefore, the assertion follows.
\end{proof}

We set $h = c_1 h_1 + \cdots + c_l h_l$ for some $c_1, \ldots, c_l \in \overline{K}$ and $h_1, \ldots, h_l \in A$.
Let $K'$ be a finite Galois extension of $K$ such that $K(c_1, \ldots, c_l) \subseteq K'$.
Here we put $f = \prod_{\sigma \in \Gal(K'/K)} \sigma(h)$.
Note that $f \in A \setminus \{ 0 \}$ and $w^*(\pi^*(f)) = 0$ for all $w \in T$, so that
$T \subseteq \bigcup_{S \cap \Spec(A/fA)(\overline{K})} O_v(x)$. Therefore,
if we set $Y = \Supp(D) \cup \Spec(A/fA)$, then the proposition follows.
\end{proof}


\renewcommand{\theTheorem}{\arabic{section}.\arabic{subsection}.\arabic{Theorem}}
\renewcommand{\theClaim}{\arabic{section}.\arabic{subsection}.\arabic{Theorem}.\arabic{Claim}}

\section{Endomorphism and Green function}
\label{sec:end:green}

This section consists of the construction of the canonical Green functions for a given $\mathbb R$-Cartier divisor in the algebraic dynamical system setting, which can be considered as a generalization of the construction of the canonical metrics in \cite{ZhSmall}. Here we explain them in terms of Green functions on either Berkovich spaces or complex varieties.
Throughout this section, we fix the following notation.
Let $X$ be a projective and geometrically integral variety over a field $K$.
Let $f : X \to X$ be a surjective endomorphism of $X$ over $K$.
Let $D$ be an $\RR$-Cartier divisor on $X$.
We assume that there are a real number $d$ and $\varphi \in \Rat(X)^{\times}_{\RR}$
such that $d > 1$ and $f^*(D) = dD + (\varphi)$.

\subsection{
Non-archimedean case}
\setcounter{Theorem}{0}
We assume that $K$ is the quotient field of a complete discrete valuation ring $R$.
Let $X^{\an}$ be the analytification of $X$ in the sense of Berkovich.
Note that $f^{\an} : X^{\an} \to X^{\an}$ is also surjective by \cite[Proposition~3.4.7]{Be}.
Let us begin with the following proposition:

\begin{Proposition}
\label{prop:exist:Green:fun}
There exists a unique $D$-Green function $g$ of $C^0$-type on $X^{\an}$
such that $(f^{\an})^*(g) = dg - \log \vert \varphi \vert^2$.
\end{Proposition}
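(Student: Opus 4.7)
The plan is to set this up as a Banach fixed point problem on the affine space of $D$-Green functions of $C^0$-type, with the contraction coming from the hypothesis $d>1$. Rewrite the target equation $(f^{\an})^*(g)=dg-\log|\varphi|^2$ as $g=T(g)$, where
\[
T(g):=\frac{1}{d}\bigl((f^{\an})^*(g)+\log|\varphi|^2\bigr).
\]
First I would check that $T$ sends $D$-Green functions of $C^0$-type to $D$-Green functions of $C^0$-type: if $s$ is a local equation of $D$ on an open $U\subseteq X^{\an}$, then $s\circ f^{\an}$ is a local equation of $f^*(D)=dD+(\varphi)$, so up to units it equals $s^d\varphi$; hence $(f^{\an})^*(g)+d\log|s|^2+\log|\varphi|^2$ is continuous on $U$, which is exactly the statement that $T(g)+\log|s|^2$ is continuous (after dividing by $d$).

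Next, fix any initial $D$-Green function $g_0$ of $C^0$-type (such a function exists by the standard construction of Green functions of $C^0$-type for Cartier divisors, extended $\mathbb R$-linearly to $\mathbb R$-Cartier divisors). Any other $D$-Green function of $C^0$-type differs from $g_0$ by an element of $C^0(X^{\an})$; since $X$ is projective, $X^{\an}$ is compact Hausdorff (Berkovich), so $C^0(X^{\an})$ equipped with the sup norm $\|\cdot\|_{\sup}$ is a Banach space. For $g,g'$ two $D$-Green functions of $C^0$-type,
\[
T(g)-T(g')=\frac{1}{d}(f^{\an})^*(g-g'),\qquad \|T(g)-T(g')\|_{\sup}\leq \frac{1}{d}\|g-g'\|_{\sup},
\]
so $T$ is a contraction with ratio $1/d<1$. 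I would then run the iteration $g_n:=T^n(g_0)$, observe that $\|g_{n+1}-g_n\|_{\sup}\leq d^{-n}\|g_1-g_0\|_{\sup}$, conclude $(g_n)$ is Cauchy in $g_0+C^0(X^{\an})$, and take $g:=\lim_n g_n$, which is a $D$-Green function of $C^0$-type (uniform limit of continuous functions) and satisfies $T(g)=g$ by continuity of $T$.

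For uniqueness, suppose $g_1,g_2$ are two solutions. Then $u:=g_1-g_2\in C^0(X^{\an})$ satisfies $(f^{\an})^*(u)=du$. Because $f^{\an}$ is surjective (cited from \cite[Proposition~3.4.7]{Be}), we have $\|u\circ f^{\an}\|_{\sup}=\|u\|_{\sup}$, so $d\|u\|_{\sup}=\|u\|_{\sup}$, which forces $u=0$ since $d>1$.

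The main obstacle will be the bookkeeping verification that $T$ genuinely preserves the class of $D$-Green functions of $C^0$-type (as opposed to shifting the divisor), which requires unwinding the identity $f^*(D)=dD+(\varphi)$ in terms of local equations as above. Everything else is a routine application of the contraction mapping principle, exploiting that $d>1$ provides both existence (contraction) and uniqueness (the eigenvalue $d$ cannot be attained by $u\mapsto u\circ f^{\an}$ on $C^0(X^{\an})$).
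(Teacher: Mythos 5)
Your proposal is correct and is essentially the paper's own argument: the paper fixes an initial $D$-Green function $g_0$, writes the discrepancy $(f^{\an})^*(g_0)=dg_0-\log|\varphi|^2+\lambda_0$, and sums the uniformly convergent series $\sum_i d^{-(i+1)}((f^{\an})^i)^*(\lambda_0)$, which is exactly the explicit form of your contraction iteration $T^n(g_0)$, and its uniqueness argument (using $d>1$ and the surjectivity of $f^{\an}$ so that $\|(f^{\an})^*(u)\|_{\sup}=\|u\|_{\sup}$) is identical to yours.
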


\begin{proof}
Let us fix a $D$-Green function $g_0$ of $C^0$-type on $X^{\an}$.
As 
\[
f^*(D) = d D + (\varphi), 
\]
$(f^{\an})^*(g_0)$ is a $(dD + (\varphi))$-Green function of $C^0$-type, and hence,
there is a continuous function $\lambda_0$ on $X^{\an}$ such that
\[
(f^{\an})^*(g_0) = d g_0 - \log \vert \varphi \vert^2 + \lambda_0.
\]
For each $n \in \ZZ_{\geq 1}$,
let us consider a continuous function $h_n$ on $X^{\an}$ given by
\[
h_n := \sum_{i=0}^n \frac{1}{d^{i+1}} ((f^{\an})^i)^*(\lambda_0).
\]

\begin{Claim}
\begin{enumerate}
\renewcommand{\labelenumi}{(\alph{enumi})}
\item
There is a continuous function $h$ on $X^{\an}$ such that
the sequence $\{ h_n \}$ converges uniformly to $h$.

\item
$(f^{\an})^*(h) + \lambda_0 = dh$.
\end{enumerate}
\end{Claim}

\begin{proof}
(a) It is sufficient to show that $\Vert h_n - h_m \Vert_{\sup} \to 0$ as
$n, m \to \infty$. 
Indeed, if $n > m$, then
\begin{align*}
\Vert h_n - h_m \Vert_{\sup} & = \left\Vert \sum_{i=m+1}^n \frac{1}{d^{i+1}} ((f^{\an})^i)^*(\lambda_0) \right\Vert_{\sup}
\leq \sum_{i=m+1}^n \frac{1}{d^{i+1}} \left\Vert ((f^{\an})^i)^*(\lambda_0) \right\Vert_{\sup} \\
& = \left\Vert \lambda_0 \right\Vert_{\sup} \sum_{i=m+1}^n \frac{1}{d^{i+1}},
\end{align*}
as required.

(b) Note that
\[
(f^{\an})^*(h_n) + \lambda_0 = \sum_{i=0}^n \frac{1}{d^{i+1}} ((f^{\an})^{i+1})^*(\lambda_0) + \lambda_0
= d h_{n+1},
\]
and hence the assertion follows.
\end{proof}

If we set $g = g_0 + h$, then $g$ is a $D$-Green function of $C^0$-type and
\begin{align*}
(f^{\an})^*(g) & = (f^{\an})^*(g_0) + (f^{\an})^*(h)
= \left( d g_0 - \log \vert \varphi \vert^2 + \lambda_0 \right) + \left(dh - \lambda_0\right)  \\
& = dg - \log \vert \varphi \vert^2,
\end{align*}
as desired.

\medskip
Next we consider the uniqueness of $g$.
Let $g'$ be another $D$-Green function of $C^0$-type such that
$(f^{\an})^*(g') = dg' - \log \vert \varphi \vert^2$.
Then, as $g'-g$ is a continuous function on $X^{\an}$ and
$(f^{\an})^*(g'-g) = d(g'-g)$, we have
\[
\Vert g' - g \Vert_{\sup} = \Vert (f^{\an})^*(g'-g) \Vert_{\sup} = \Vert d(g'-g) \Vert_{\sup}
= d \Vert g'-g \Vert_{\sup},
\]
and hence $\Vert g'-g \Vert_{\sup} = 0$. Thus $g' = g$.
\end{proof}

\begin{Proposition}
\label{prop:Green:model}
Let $\XXX \to \Spec(R)$ be a model of $X$ over $\Spec(R)$ and $\DDD$ an $\RR$-Cartier divisor on $\XXX$ such that
$\DDD$ coincides with $D$ on $X$. 
If there is an endomorphism $\tilde{f} : \XXX \to \XXX$ over $\Spec(R)$ such that
$\rest{\tilde{f}}{X} = f$ and
$\tilde{f}^*(\DDD) = d\DDD + (\varphi)_{\XXX}$, then the $D$-Green function
$g_{(\XXX, \DDD)}$ arising from the model $(\XXX, \DDD)$ 
is equal to $g$.
\end{Proposition}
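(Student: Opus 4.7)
The plan is to show that $g_{(\XXX,\DDD)}$ satisfies the same functional equation as $g$, namely $(f^{\an})^*(g_{(\XXX,\DDD)}) = d\,g_{(\XXX,\DDD)} - \log|\varphi|^2$, and then invoke the uniqueness part of Proposition~\ref{prop:exist:Green:fun}.

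First, I would recall the basic functorial properties of Green functions arising from models. These are $\mathbb R$-linear in the divisor, compatible with pullbacks by morphisms of models, and the Green function associated with a principal divisor $(\psi)_{\XXX}$ on a model is $-\log|\psi|^2$ on $X^{\an}$. More precisely, if $\XXX$ is a model of $X$ over $\Spec(R)$, $\DDD_1,\DDD_2$ are $\mathbb R$-Cartier divisors on $\XXX$ and $\tilde f:\XXX\to\XXX$ is a morphism over $\Spec(R)$ with $\rest{\tilde f}{X}=f$, then
\[
g_{(\XXX,\,a\DDD_1+b\DDD_2)} \;=\; a\,g_{(\XXX,\DDD_1)} + b\,g_{(\XXX,\DDD_2)},\qquad g_{(\XXX,\,\tilde f^{*}\DDD_1)} \;=\; (f^{\an})^{*}\bigl(g_{(\XXX,\DDD_1)}\bigr),
\]
and $g_{(\XXX,(\psi)_{\XXX})} = -\log|\psi|^{2}$ for $\psi\in\Rat(X)^{\times}_{\mathbb R}$.

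Next, applying these three properties to the hypothesis $\tilde{f}^{*}(\DDD) = d\DDD + (\varphi)_{\XXX}$, I would compute
\[
(f^{\an})^{*}\bigl(g_{(\XXX,\DDD)}\bigr) \;=\; g_{(\XXX,\,\tilde f^{*}\DDD)} \;=\; g_{(\XXX,\,d\DDD+(\varphi)_{\XXX})} \;=\; d\,g_{(\XXX,\DDD)} \,-\, \log|\varphi|^{2}.
\]
Since $g_{(\XXX,\DDD)}$ is a $D$-Green function of $C^{0}$-type on $X^{\an}$, the uniqueness assertion in Proposition~\ref{prop:exist:Green:fun} forces $g_{(\XXX,\DDD)} = g$.

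The main obstacle is justifying the pullback compatibility $g_{(\XXX,\tilde f^{*}\DDD)} = (f^{\an})^{*}(g_{(\XXX,\DDD)})$ at the level of $\mathbb R$-Cartier divisors on a (possibly singular) model. For $\mathbb Z$-Cartier divisors on a model this is the standard definition of the model metric, and by $\mathbb R$-linearity it extends; one should also note that principal divisors of the model $(\psi)_{\XXX}$ give Green functions $-\log|\psi|^2$ even when $\psi\in\Rat(X)^\times_{\mathbb R}$ is not a genuine rational function, by decomposing along an $\mathbb R$-basis. Once this routine check is in place, the argument above completes the proof.
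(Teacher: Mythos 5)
Your proposal is correct and follows essentially the same route as the paper: both derive the functional equation $(f^{\an})^*(g_{(\XXX,\DDD)}) = d\,g_{(\XXX,\DDD)} - \log|\varphi|^2$ from the model relation $\tilde f^*(\DDD)=d\DDD+(\varphi)_{\XXX}$ and then conclude by the uniqueness statement of Proposition~\ref{prop:exist:Green:fun}. The paper simply asserts this implication in one line, whereas you spell out the functoriality of model Green functions that justifies it; no gap either way.
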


\begin{proof}
The relation $\tilde{f}^*(\DDD) = d\DDD + (\varphi)_{\XXX}$ yields
\[
(f^{\an})^*(g_{(\XXX,\DDD)}) = dg_{(\XXX,\DDD)} - \log \vert \varphi \vert^2,
\]
so that, by the uniqueness of $g$, we have $g_{(\XXX,\DDD)} = g$.
\end{proof}

Using the identities
\[
f^*(D) = dD +  \left( \varphi \right)
\quad\text{and}\quad
(f^{\an})^*(g) = g - \log \left| \varphi \right|^2,
\]
we can easily see that
\[
(f^n)^*(D) = d^n D + \left( \varphi_n \right)
\quad\text{and}\quad
((f^{\an})^n)^*(g) = d^n g - \log \left| \varphi_n \right|^2
\]
for $n \geq 1$, where
\[
\varphi_n := \prod_{i=0}^{n-1}\left( (f^{n-1-i})^*(\varphi) \right)^{d^i}.
\]
Let $\XXX \to \Spec(R)$ be a model of $X$ over $\Spec(R)$ and $\DDD$ an $\RR$-Cartier divisor on $\XXX$ with
$\rest{\DDD}{X} = D$.
For each $n \geq 1$, we choose a model $\XXX_n \to \Spec(R)$ of $X$ over $\Spec(R)$ together with
a morphism $\tilde{f}_n : \XXX_n \to \XXX$ over $\Spec(R)$ such that
$\rest{\tilde{f}_n}{X} = f^n$.
Here we define an $\RR$-Cartier divisor $\DDD_n$ on $\XXX_n$ to be
\[
\DDD_n := \frac{1}{d^n} \left( \tilde{f}_n^*(\DDD) - \left( \varphi_n \right)_{\XXX_n}
\right).
\]
Note that $\rest{\DDD_n}{X} = D$.
Then we have the following:

\begin{Proposition}
\label{prop:approx:Green}
If we set $\theta_n = g - g_{(\XXX_n, \DDD_n)}$, then $\lim_{n\to\infty} \Vert \theta_n \Vert_{\sup} = 0$.
In particular, if $\DDD$ is relatively nef, then $g$ is of $(C^0 \cap \Tpsh)$-type.
\end{Proposition}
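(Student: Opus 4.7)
The plan is to express $g_{(\XXX_n,\DDD_n)}$ as a pullback-plus-principal-divisor combination, after which the contraction factor $1/d^n$ makes the estimate essentially automatic.

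First I would verify the identity
\[
g_{(\XXX_n,\DDD_n)} \;=\; \frac{1}{d^n}\Bigl( ((f^n)^{\an})^*(g_{(\XXX,\DDD)}) \;+\; \log|\varphi_n|^2\Bigr).
\]
This follows from three standard properties of the assignment $\DDD'\mapsto g_{(\XXX_n,\DDD')}$ attached to a model: its $\RR$-linearity, the fact that a principal Cartier divisor $(\psi)_{\XXX_n}$ has Green function $-\log|\psi|^2$, and the functoriality $g_{(\XXX_n,\tilde f_n^*(\DDD))}=(\tilde f_n^{\an})^*(g_{(\XXX,\DDD)})=((f^n)^{\an})^*(g_{(\XXX,\DDD)})$ coming from $\tilde f_n|_X=f^n$. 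The relation $((f^n)^{\an})^*(g)=d^n g-\log|\varphi_n|^2$ recalled just before the proposition rearranges as $g=d^{-n}(((f^n)^{\an})^*(g)+\log|\varphi_n|^2)$; subtracting the two displays, the $\log|\varphi_n|^2$ terms cancel and I obtain
\[
\theta_n \;=\; \frac{1}{d^n}\,((f^n)^{\an})^*\bigl(g-g_{(\XXX,\DDD)}\bigr).
\]

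Since $g-g_{(\XXX,\DDD)}$ is a continuous function on the compact Berkovich space $X^{\an}$, it is bounded, and pullback along the continuous map $(f^n)^{\an}$ does not increase the sup norm. Hence
\[
\|\theta_n\|_{\sup}\;\le\; d^{-n}\,\|g-g_{(\XXX,\DDD)}\|_{\sup}\;\longrightarrow\;0,
\]
which proves the first assertion.

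For the second assertion I would argue as follows. If $\DDD$ is relatively nef, then $g_{(\XXX,\DDD)}$ is of $(C^0\cap\Tpsh)$-type by definition. Pullback of such a Green function along $(f^n)^{\an}$ remains of $(C^0\cap\Tpsh)$-type (standard pullback stability of PSH Green functions on Berkovich spaces); adding the Green function $\log|\varphi_n|^2$ of a principal divisor does not alter the associated semipositive current; and rescaling by the positive real $d^{-n}$ preserves the class. Consequently each $g_{(\XXX_n,\DDD_n)}$ is of $(C^0\cap\Tpsh)$-type. Since this class is closed under uniform convergence on $X^{\an}$, the uniform limit $g$ established in the previous step inherits the property. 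The main technical point to pin down carefully is precisely this pullback and uniform-closure stability for $(C^0\cap\Tpsh)$-type Green functions in the Berkovich setting; the remaining content of the proposition is a telescoping comparison driven by the expansion factor $d>1$.
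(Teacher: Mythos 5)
Your proof is correct and follows essentially the same route as the paper: the key identity is the paper's $((f^{\an})^n)^*(\theta)=d^n\theta_n$ (your $\theta_n=d^{-n}\,((f^n)^{\an})^*(\theta)$), obtained from $\tilde f_n^*(\DDD)=d^n\DDD_n+(\varphi_n)_{\XXX_n}$, and the sup-norm estimate then gives $\Vert\theta_n\Vert_{\sup}\to 0$ (the paper even has equality of sup norms, using surjectivity of $f^{\an}$, but your inequality suffices). For the final assertion the paper argues at the level of models --- $\DDD_n$ is relatively nef because $\tilde f_n^*(\DDD)$ is, so $g_{(\XXX_n,\DDD_n)}$ is of $(C^0\cap\Tpsh)$-type directly --- which sidesteps the pullback-stability of psh Green functions on $X^{\an}$ that you flag as the delicate technical point, while the uniform-limit closure is used by both arguments.
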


\begin{proof}
Since
\[
\tilde{f}_n^*(\DDD) = d^n \DDD_n + \left( \varphi_n \right)_{\XXX_n},
\]
we have 
\[
((f^{\an})^n)^*(g_{(\XXX, \DDD)}) = d^n g_{(\XXX_n,\DDD_n)} -
\log \left| \varphi_n\right|^2,
\]
so that if we set $\theta = g - g_{(\XXX, \DDD)}$, then
$((f^{\an})^n)^*(\theta) = d^n \theta_n$.
Therefore,
\[
\Vert \theta \Vert_{\sup} = \Vert ((f^{\an})^n)^*(\theta) \Vert_{\sup} = 
\Vert d^n \theta_n \Vert_{\sup} = d^n \Vert \theta_n \Vert_{\sup},
\]
and hence $\lim_{n\to\infty} \Vert \theta_n \Vert_{\sup} = 0$.

For the last statement, note that if $\DDD$ is relatively nef,
then $\DDD_n$ is also relatively nef for $n \geq 1$.
\end{proof}

\subsection{Complex case}
\setcounter{Theorem}{0}
We assume that $K = \CC$.

\begin{Proposition}
\label{prop:exist:Green:fun:complex}
There exists a unique $D$-Green function $g$ of $C^0$-type on $X$
such that $f^*(g) = dg - \log \vert \varphi \vert^2$.
Moreover,
if there is a $D$-Green function of $(C^0 \cap \Tpsh)$-type,
then $g$ is also of $(C^0 \cap \Tpsh)$-type.
\end{Proposition}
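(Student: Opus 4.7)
The plan is to transcribe the non-archimedean argument of Proposition~\ref{prop:exist:Green:fun} to the complex setting, and to supplement it with an iterated pullback construction for the PSH refinement. Fix any $D$-Green function $g_0$ of $C^0$-type on the compact complex variety $X$. Since $f$ is surjective, $f^*(g_0)$ is a $(dD+(\varphi))$-Green function of $C^0$-type, so
\[
\lambda_0 := f^*(g_0) - dg_0 + \log|\varphi|^2
\]
is a continuous function on $X$. Because $X$ is compact and $f$ induces a surjection on $\CC$-points, $f^*$ preserves sup norms of continuous functions; thus the partial sums $h_n := \sum_{i=0}^{n} d^{-(i+1)}(f^i)^*(\lambda_0)$ form a Cauchy sequence in the sup norm and converge uniformly to a continuous function $h$ satisfying $f^*(h)+\lambda_0 = dh$. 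Then $g := g_0 + h$ is a $D$-Green function of $C^0$-type satisfying $f^*(g) = dg - \log|\varphi|^2$. For uniqueness, if $g'$ is any other such Green function, then $g'-g$ is continuous with $f^*(g'-g) = d(g'-g)$; taking sup norms yields $\|g'-g\|_{\sup} = d\|g'-g\|_{\sup}$, so $g' = g$ since $d > 1$.

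For the moreover statement, start with a $D$-Green function $g_0$ of $(C^0\cap\Tpsh)$-type and iterate
\[
g_{n+1} := \tfrac{1}{d}\bigl(f^*(g_n) + \log|\varphi|^2\bigr).
\]
Each $g_n$ is a $D$-Green function since $f^*(D)-(\varphi) = dD$, and the Poincar\'e--Lelong computation
\[
dd^c g_{n+1} + \delta_D = \tfrac{1}{d}\bigl(dd^c(f^*g_n) + \delta_{(\varphi)} + d\,\delta_D\bigr) = \tfrac{1}{d}\, f^*\!\bigl(dd^c g_n + \delta_D\bigr)
\]
shows inductively that each $g_n$ is of $(C^0\cap\Tpsh)$-type: the pullback of a positive $(1,1)$-current by the surjective holomorphic map $f$ is positive, equivalently composing a PSH local potential with $f$ preserves plurisubharmonicity. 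The recursion $g_{n+1}-g_n = \frac{1}{d}f^*(g_n-g_{n-1})$ gives the estimate $\|g_{n+1}-g_n\|_{\sup}\leq d^{-n}\|g_1-g_0\|_{\sup}$, so $\{g_n\}$ converges uniformly to a fixed point of the operator $h\mapsto\frac{1}{d}(f^*h + \log|\varphi|^2)$, which by the uniqueness established above must equal $g$. Since a uniform limit of $(C^0\cap\Tpsh)$-type $D$-Green functions is again of $(C^0\cap\Tpsh)$-type, we conclude.

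The main technical subtlety is the preservation of PSH-type under $f^*$; this is handled by working with local potentials, where $f^*$ acts by composition on the PSH part and pulls back the divisorial contribution to a divisor of the expected form, so that both continuity and plurisubharmonicity are preserved. Beyond this, the argument is a direct complex-analytic translation of the contraction-mapping argument used in Proposition~\ref{prop:exist:Green:fun}.
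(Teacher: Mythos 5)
Your proof is correct and takes essentially the same approach as the paper: the contraction argument of Proposition~\ref{prop:exist:Green:fun} for existence and uniqueness, and for the second assertion the normalized pullbacks (your recursion $g_{n+1}=\tfrac1d(f^*(g_n)+\log|\varphi|^2)$ is exactly the paper's sequence $g_n=d^{-n}\bigl((f^n)^*(g_0)+\log|\varphi_n|^2\bigr)$), each of $(C^0\cap\Tpsh)$-type, converging uniformly to $g$, with the conclusion that a uniform limit of $(C^0\cap\Tpsh)$-type Green functions is again of that type. The only cosmetic difference is that you obtain uniform convergence via a geometric-series/fixed-point estimate and then invoke uniqueness, whereas the paper uses the identity $(f^n)^*(g-g_0)=d^n(g-g_n)$ and the surjectivity of $f$ to get $\Vert g-g_n\Vert_{\sup}=d^{-n}\Vert g-g_0\Vert_{\sup}$ directly.
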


\begin{proof}
We can prove the unique existence of $g$ in the same way as Proposition~\ref{prop:exist:Green:fun}.
Let $g_0$ be a $D$-Green function of $(C^0 \cap \Tpsh)$-type.
As in the previous subsection, we can see
\[
(f^n)^*(D) = d^n D + \left( \varphi_n \right)
\quad\text{and}\quad
(f^n)^*(g) = d^n g - \log \left| \varphi_n \right|^2
\]
for $n \geq 1$, where
\[
\varphi_n := \prod_{i=0}^{n-1}\left( (f^{n-1-i})^*(\varphi) \right)^{d^i}.
\]
Here we define $g_n$ to be
\[
g_n := \frac{1}{d^n} \left( (f^n)^*(g_0) + \log \left|\varphi_n \right|^2 \right),
\]
that is,
\[
(f^n)^*(g_0) = d^n g_n- \log \left| \varphi_n \right|^2.
\]
Then $g_n$ is a $D$-Green function of $(C^0 \cap \Tpsh)$-type. Moreover,
if we set $\theta = g - g_0$ and $\theta_n = g - g_n$, then
$(f^n)^*(\theta) = d^n \theta_n$.
Thus
\[
\Vert \theta \Vert_{\sup} = \Vert (f^n)^*(\theta) \Vert_{\sup} = 
\Vert d^n \theta_n \Vert_{\sup} = d^n \Vert \theta_n \Vert_{\sup},
\]
and hence $\lim_{n\to\infty} \Vert \theta_n \Vert_{\sup} = 0$. Therefore,
$g$ is of $(C^0 \cap \Tpsh)$-type by \cite[Theorem~2.9.14, (iii)]{MK}.
\end{proof}

Let $c : \Spec(\CC) \to \Spec(\CC)$ be the morphism given by 
the complex conjugation map 
$z \mapsto \bar{z}$.
Let $\widetilde{X}$ denote the fiber product $X \times^{c}_{\Spec(\CC)} \Spec(\CC)$ in terms of $c$.
Let $F : \widetilde{X} \to X$ be the projection morphism and
$\tilde{f} : \widetilde{X} \to \widetilde{X}$ the induced morphism by $f$.
Note that the following diagram is commutative:
\[
\xymatrix{
\widetilde{X} \ar[r]^{\tilde{f}} \ar[d]_{F} &  \widetilde{X} \ar[d]^{F} \\
X \ar[r]^{f} & X
}
\]
If we set $\widetilde{D} = F^*(D)$ and $\tilde{\varphi} = F^*(\varphi)$,
then $\tilde{f}^*(\widetilde{D}) = d \widetilde{D} + (\tilde{\varphi})$.
For $x \in \widetilde{X}(\CC)$,
the composition $\Spec(\CC) \overset{c}{\longrightarrow} \Spec(\CC) 
\overset{x}{\longrightarrow} \widetilde{X} \overset{F}{\longrightarrow} X$
yields a $\CC$-valued point of $X$, so that we define
$F_{\infty} : \widetilde{X}(\CC) \to X(\CC)$ to be $F_{\infty}(x) = F \circ x \circ c$.
The above commutative diagram gives rise to the following commutative diagram:
\[
\xymatrix{
\widetilde{X}(\CC) \ar[r]^{\tilde{f}} \ar[d]_{F_{\infty}} &  \widetilde{X}(\CC) \ar[d]^{F_{\infty}} \\
X(\CC) \ar[r]^{f} & X(\CC)
}
\]

\begin{Proposition}
\label{prop:Green:fun:complex:F:infty}
Let $g$ be a $D$-Green function of $C^0$-type on $X$ with $f^*(g) = dg - \log | \varphi |^2$
as in Proposition~\rom{\ref{prop:exist:Green:fun:complex}}.
Then $\tilde{g} := F_{\infty}^*(g)$ is a $\widetilde{D}$-Green function of $C^0$-type on $\widetilde{X}$
with $\tilde{f}^*(\tilde{g}) = d\tilde{g} - \log | \tilde{\varphi} |^2$.
Moreover, if $g$ is of $(C^0 \cap \Tpsh)$-type, then
$\tilde{g}$ is also of $(C^0 \cap \Tpsh)$-type.
\end{Proposition}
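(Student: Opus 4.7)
The plan is to address the three assertions in order: that $\tilde g$ is a $\widetilde D$-Green function of $C^0$-type, that it satisfies the stated functional equation, and that it inherits the $(C^0\cap\Tpsh)$-type property from $g$.

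First, since $\widetilde D = F^*(D)$ as Cartier divisors and $F_\infty : \widetilde X(\CC) \to X(\CC)$ is a continuous bijection (in fact a real-analytic diffeomorphism), a local equation $s$ of $D$ near $F_\infty(x)$ pulls back via $F$ to a local equation $\tilde s = F^*(s)$ of $\widetilde D$ near $x$. The defining property $g + \log|s|^2 \in C^0$ then pulls back to $\tilde g + \log|\tilde s|^2 \in C^0$, so $\tilde g$ is a $\widetilde D$-Green function of $C^0$-type. For the functional equation, the commutative diagram $F_\infty \circ \tilde f = f \circ F_\infty$ gives
\[
\tilde f^*(\tilde g) \;=\; \tilde f^* F_\infty^*(g) \;=\; F_\infty^* f^*(g) \;=\; F_\infty^*\bigl(dg - \log|\varphi|^2\bigr) \;=\; d\tilde g - F_\infty^*\bigl(\log|\varphi|^2\bigr).
\]
It then suffices to identify $F_\infty^*(\log|\varphi|^2)$ with $\log|\tilde\varphi|^2$: unwinding $F_\infty(x) = F \circ x \circ c$ for a $\CC$-point $x$ of $\widetilde X$, one computes $(F_\infty(x))^*\varphi = c^*(x^*\tilde\varphi) = \overline{\tilde\varphi(x)}$, so that $|\varphi|(F_\infty(x)) = |\tilde\varphi(x)|$, and the identity extends $\RR$-linearly from $\Rat(X)^\times$ to $\Rat(X)^\times_\RR$.

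For the final claim, my preferred route combines existence with uniqueness. Assuming that $g$ is of $(C^0\cap\Tpsh)$-type, the function $F_\infty^*(g)$ is a $C^0$-type $\widetilde D$-Green function on $\widetilde X$, and it is moreover plurisubharmonic because $F_\infty$ realizes $\widetilde X(\CC)$ as the conjugate complex manifold of $X(\CC)$: holomorphic disks in one correspond to holomorphic disks in the other after conjugating the disk parameter, which preserves subharmonicity. Hence the hypothesis of Proposition~\ref{prop:exist:Green:fun:complex} is satisfied for the dynamical system $(\widetilde X, \tilde f, \widetilde D, \tilde\varphi, d)$, and its unique Green function, which by the previous paragraph coincides with $\tilde g$, must be of $(C^0\cap\Tpsh)$-type. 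The only delicate step is this invariance of $\Tpsh$ under $F_\infty^*$; once $F_\infty$ is recognized as antiholomorphic it reduces to the standard compatibility of plurisubharmonicity with conjugation of the complex structure, which is a mild technical point rather than a serious obstacle.
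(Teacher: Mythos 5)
Your proposal is correct and follows essentially the same route as the paper: the identity $F^*(\psi)(x)=\overline{\psi(F_\infty(x))}$ gives both the $C^0$-Green-function property and $F_\infty^*(\log|\varphi|^2)=\log|\tilde\varphi|^2$, and the functional equation comes from the same commutative-diagram computation. The only difference is the last assertion, where the paper simply cites \cite[Lemma~5.1.1]{MoArZariski} while you spell out the underlying fact that $F_\infty$ is antiholomorphic so pullback preserves plurisubharmonicity; note that once you have shown $F_\infty^*(g)$ is itself of $(C^0\cap\Tpsh)$-type, the additional detour through the uniqueness statement of Proposition~\ref{prop:exist:Green:fun:complex} is redundant.
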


\begin{proof}
It is easy to see that
$\tilde{g}$ is a $\widetilde{D}$-Green function of $C^0$-type on $\widetilde{X}$ because
\[
F^*(\psi)(x) = x^*(F^*(\psi)) = c^*(c^*(x^*(F^*(\psi)))) = \overline{(F_{\infty}(x))^*(\psi)}
= \overline{\psi(F_{\infty}(x))}
\]
for $x \in \widetilde{X}(\CC)$ and $\psi \in \Rat(X)^{\times}$.
In addition,
\[
\tilde{f}^*(\tilde{g}) = F_{\infty}^*(f^*(g)) = F_{\infty}^*(dg - \log | \varphi |^2) = d\tilde{g} - \log | \tilde{\varphi} |^2.
\]
The last assertion follows from the same argument of \cite[Lemma~5.1.1]{MoArZariski}.
\end{proof}

\renewcommand{\theTheorem}{\arabic{section}.\arabic{Theorem}}
\renewcommand{\theClaim}{\arabic{section}.\arabic{Theorem}.\arabic{Claim}}

\section{Canonical compactification}
\label{sec:canonical:comp}
Let $X$ be a projective and geometrically integral variety over a number field $K$.
Let $f : X \to X$ be a surjective endomorphism of $X$ over $K$.
Let $D$ be an $\RR$-Cartier divisor on $X$.
We assume that there are a real number $d$ and $\varphi \in \Rat(X)^{\times}_{\RR}$
such that $d > 1$ and $f^*(D) = dD + (\varphi)$.
We use the same notation as in Conventions and terminology~\ref{CV:residue:field} $\sim$ \ref{CV:analytic:point}.
In addition, for each $v \in M_K \cup K(\CC)$,
let $f^{\an}_{v} : X^{\an}_{v} \to X^{\an}_{v}$ be the induced map by $f$.
By Proposition~\ref{prop:exist:Green:fun}, for $\mathfrak p \in M_K$,
there is a unique $D$-Green function $g_{\mathfrak p}$ of $C^0$-type on $X_{\mathfrak p}^{\an}$ with
\[
(f^{\an}_{\mathfrak p})^*(g_{\mathfrak p}) = d g_{\mathfrak p} - \log \vert \varphi \vert_{\mathfrak p}^2.
\]
We can find a model $\XXX_U$ of $X$ over a non-empty Zariski open set $U$ of $\Spec(O_K)$,
an $\RR$-Cartier divisor $\DDD_U$ on $\XXX_U$ and
an endomorphism $f_U : \XXX_U \to \XXX_U$ over $U$
such that $\rest{f_U}{X} = f$ and $f_U^*(\DDD) = d\DDD + (\varphi)$ on $\XXX_U$, so that,
by Proposition~\ref{prop:Green:model}, for $P \in U$,
$g_P$ comes from the model $(\XXX_U, \DDD_U)$.
Further,
by virtue of Proposition~\ref{prop:exist:Green:fun:complex} and Proposition~\ref{prop:Green:fun:complex:F:infty},
let us take a unique $F_{\infty}$-invariant $D$-Green function $g_{\infty}$ of $C^0$-type on $X^{\an}_{\infty}$ 
(for the definition of $X^{\an}_{\infty}$,
see Conventions and terminology~\ref{CV:analytication}) such that
\[
(f^{\an}_{\infty})^*(g_{\infty}) = d g_{\infty} - \log \vert \varphi \vert_{\infty}^2,
\]
where $f^{\an}_{\infty} := \coprod_{\sigma \in K(\CC)} f^{\an}_{\sigma}$.
Therefore,
\[
\overline{D} := \left(D, \sum_{P \in M_K} g_P [P] + g_{\infty}[\infty]\right)
\]
forms an adelic arithmetic Cartier divisor of $C^0$-type on $X$.
By our construction,
\[
f^*(\overline{D}) = d \overline{D} + \widehat{(\varphi)}.
\]
The adelic arithmetic Cartier divisor $\overline{D}$ is called
{\em the canonical compactification of $D$} with respect to $f$.

\begin{Lemma}
\label{lem:can:comp:nef}
If $D$ is ample, that is,
there are ample Cartier divisors $D_1, \ldots, D_r$ on $X$ and $a_1, \ldots, a_r \in \RR_{>0}$ with
$D = a_1 D_1 + \cdots + a_r D_r$, then $\overline{D}$ is nef.
\end{Lemma}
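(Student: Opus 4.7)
My plan is to verify the two conditions that make up the definition of nef from \ref{CV:measure}: first that $\overline{D}$ is relatively nef (i.e., $D$ is a nef $\mathbb{R}$-Cartier divisor and each Green function $g_v$ is of $(C^0\cap\Tpsh)$-type), and second that the height function $h_{\overline{D}}(\cdot)$ is non-negative on $X(\overline{K})$. The nefness of $D$ itself is automatic from ampleness, so the real work lies in checking the Green functions and the height positivity.

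For the $(C^0\cap\Tpsh)$-property of the Green functions, I would treat the archimedean and non-archimedean places separately. At $v=\sigma\in K(\CC)$, I invoke Proposition~\ref{prop:exist:Green:fun:complex}: since $D$ is ample, one can produce a $D$-Green function of $(C^0\cap\Tpsh)$-type on $X_\sigma^{\an}$ (for example by pulling back a Fubini--Study type metric along an embedding given by a large multiple of $D$), and the proposition then forces the canonical $g_\sigma$ to be of $(C^0\cap\Tpsh)$-type as well. At $v=\mathfrak p\in M_K$, I start from a projective integral model $(\XXX,\DDD)$ with $\DDD$ relatively ample (possible because $D$ is ample), which exists over $\Spec(O_K)$ after enlarging to $\Spec(R_\mathfrak p)$; for each $n$ I resolve indeterminacy to obtain a model $\XXX_n\to\XXX$ on which $f^n$ extends to $\tilde f_n$, and I form $\DDD_n:=\frac{1}{d^n}\big(\tilde f_n^*(\DDD)-(\varphi_n)_{\XXX_n}\big)$, which is still relatively nef because pullback preserves relative nefness and a principal divisor is numerically trivial. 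Proposition~\ref{prop:approx:Green} then shows that $g_{(\XXX_n,\DDD_n)}$ converges uniformly to $g_{\mathfrak p}$, hence $g_\mathfrak p$ is of $(C^0\cap\Tpsh)$-type.

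For the positivity of the height function, the key is the functional equation $f^*(\overline{D})=d\overline{D}+\widehat{(\varphi)}$ combined with the product formula. Pulling back heights and using the fact recorded in \ref{CV:height} that the height is insensitive to twists by principal adelic divisors, one gets
\[
h_{\overline{D}}(f(x))=d\cdot h_{\overline{D}}(x)\qquad\text{for every }x\in X(\overline K),
\]
and iterating yields $h_{\overline{D}}(f^n(x))=d^n h_{\overline D}(x)$. Since $D$ is ample, $h_{\overline D}$ is bounded below by a constant $-C$ on all of $X(\overline K)$: indeed, choosing an integer $m\geq 1$ and a nonzero $s\in H^0(X,mD)$, the continuous functions $|s|_{g_v}$ on the compact spaces $X_v^{\an}$ are uniformly bounded (and equal to a fixed model bound for almost all $\mathfrak p$), which via the defining formula for the height translates to a uniform lower bound for $h_{m\overline{D}+\widehat{(s)}}=m\cdot h_{\overline{D}}$. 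With this lower bound, if we had $h_{\overline{D}}(x_0)<0$ for some $x_0$, then $h_{\overline{D}}(f^n(x_0))=d^n h_{\overline D}(x_0)\to-\infty$ since $d>1$, contradicting the lower bound. Therefore $h_{\overline D}(x)\geq 0$ for all $x$, and $\overline{D}$ is nef.

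The main obstacle I anticipate is the lower bound for $h_{\overline{D}}$: the dynamical scaling argument is clean, but justifying the uniform lower bound in the $\mathbb R$-Cartier setting requires a mild additional care because a single section $s$ only controls heights outside $\Supp(mD+(s))$; strictly speaking one needs to cover $X(\overline K)$ by finitely many such complements (working by Noetherian induction on the supports that arise from different choices of $m$ and $s$, using ampleness of $D$) or, alternatively, note that the set where the lower bound fails lies in a proper Zariski-closed subset and then iterate the argument on each irreducible component, which is again projective with an ample restriction.
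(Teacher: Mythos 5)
Your proposal is correct and follows essentially the same route as the paper: relative nefness of $\overline D$ is obtained from a relatively nef model and a positive archimedean Green function via Propositions~\ref{prop:approx:Green} and~\ref{prop:exist:Green:fun:complex}, and non-negativity of the height follows from $h_{\overline D}(f(x))=d\,h_{\overline D}(x)$ together with the lower bound $h_{\overline D}\geq C$ coming from ampleness of $D$. The only difference is that you spell out a justification of this lower bound (which the paper simply asserts) and work place by place with a relatively ample local model rather than one global relatively nef model over $\Spec(O_K)$; note that after passing to a common model one should only claim relative nefness, which is all that Proposition~\ref{prop:approx:Green} requires.
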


\begin{proof}
First let us see the following claim:

\begin{Claim}
\begin{enumerate}
\renewcommand{\labelenumi}{(\alph{enumi})}
\item
There are
a model $\pi : \XXX \to \Spec(O_K)$ of $X$ over $\Spec(O_K)$
and a relatively nef $\RR$-Cartier divisor $\DDD$ on $\XXX$ such that $\rest{\DDD}{X} = D$. 

\item
There is an $F_{\infty}$-invariant 
$D$-Green function $h$ of $C^{\infty}$-type on $X^{\an}_{\infty}$ such that $c_1(D, h)$ is positive.
\end{enumerate}
\end{Claim}

\begin{proof}
If $D$ is an ample Cartier divisor, then the assertions (a) and (b) are well-known.
Moreover, in this case, $\DDD$ in (a) can be taken as a $\QQ$-Cartier divisor.

(a) 
For each $i=1, \ldots, r$,
there are a model $\XXX_i \to \Spec(O_K)$ of $X$ over $\Spec(O_K)$
and a relatively nef $\QQ$-Cartier divisor $\DDD_i$ on $\XXX_i$ such that $\rest{\DDD_i}{X} = D_i$. 
Let us take a model $\XXX \to \Spec(O_K)$ of $X$ over $\Spec(O_K)$ such that
we have a birational morphism $\mu_i : \XXX \to \XXX_i$ over $\Spec(O_K)$ for each $i=1, \ldots, r$.
If we set $\DDD = a_1 \mu_1^*(\DDD_1) + \cdots + a_r \mu_r^*(\DDD_r)$,
then $\DDD$ is relatively nef and $\rest{\DDD}{X} = D$.

(b) For each $i=1, \ldots, r$,
let $h_i$ be an $F_{\infty}$-invariant $D_i$-Green function of $C^{\infty}$-type on $X^{\an}_{\infty}$
such that $c_1(D_i, h_i)$ is positive.
Then $a_1 h_1 + \cdots + a_r h_r$ is our desired Green function.
\end{proof}

By the above claim together with Proposition~\ref{prop:approx:Green}
and Proposition~\ref{prop:exist:Green:fun:complex},
$g_P$ and $g_{\infty}$ are  of $(C^0 \cap \Tpsh)$-type on $X_P^{\an}$ and $X^{\an}_{\infty}$, respectively.
Therefore, $\overline{D}$ is
relatively nef.
Let $h_{\overline{D}}$
be the height function 
associated with $\overline{D}$.
Then 
\[
h_{\overline{D}}(f(x)) = d h_{\overline{D}}(x)
\]
for all $x \in X(\overline{K})$.
Indeed, 
\[
h_{\overline{D}}(f(x)) = h_{f^*(\overline{D})}(x) = h_{d \overline{D} + \widehat{(\varphi)}}(x)
= h_{d \overline{D}}(x) = d  h_{\overline{D}}(x).
\]
As $D$ is ample, there is a constant $C$ such that $h_{\overline{D}} \geq C$.
In particular, 
\[
h_{\overline{D}}(x) = h_{\overline{D}}(f^n(x))/d^n \geq C/d^n
\]
for all $n \geq 1$, and hence $h_{\overline{D}}(x) \geq 0$ for $x \in X(\overline{K})$.
Therefore, $\overline{D}$ is nef.
\end{proof}

For $v \in M_K \cup K(\CC)$, 
we set
\[
\begin{cases}
\Prep(f) := \{ x \in X(\overline{K}) \mid \text{$f^n (x) = f^m(x)$ for some $0 \leq n < m$} \}, \\
\Prep(f_{v}) := \{ x \in X_{v}(\overline{K_{v}}) \mid \text{$f_{v}^n (x) = f_{v}^m(x)$ 
for some $0 \leq n < m$} \}.
\end{cases}
\]
An element of $\Prep(f)$ (resp. $\Prep(f_{v})$) is called
a {\em preperiodic point} of $f$ (resp. $f_{v}$).
Moreover, for a subset $T$ of $X_{v}(\overline{K_{v}})$, $T^{\an}$ is defined by
\[
T^{\an} := \{ w^{\an} \mid w \in T \}
\]
(for the definition of $w^{\an}$, see Conventions and terminology~\ref{CV:analytic:point}).
Let us see the following proposition.

\begin{Proposition}
\label{prop:thm:canonical:comp}
$\bigcup_{x \in \Prep(f)} O_{v}(x) = \Prep(f_{v})$
(for the definition of
$O_{v}(x)$, see Conventions and terminology~\rom{\ref{CV:orbit}}).
\end{Proposition}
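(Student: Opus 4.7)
My plan is to reformulate preperiodicity on $X_v$ by using the fact that the projection $\pi: X_v \to X$ identifies $X_v(\overline{K_v})$ with the set of $K$-morphisms $\Spec(\overline{K_v}) \to X$, under which $f_v$ becomes post-composition by $f$. So I would first note that $w \in \Prep(f_v)$ is equivalent to $f^n \circ \pi \circ w = f^m \circ \pi \circ w$ for some $0 \leq n < m$ as morphisms $\Spec(\overline{K_v}) \to X$. With this reformulation, both inclusions become statements about $K$-morphisms from spectra of fields to $X$. Throughout I would fix an embedding $\overline K \hookrightarrow \overline{K_v}$ identifying $\overline K$ with the algebraic closure of $K$ inside $\overline{K_v}$.

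For the inclusion $\bigcup_{x \in \Prep(f)} O_v(x) \subseteq \Prep(f_v)$, I would take $x \in \Prep(f)$ with $f^n \circ x = f^m \circ x$ and $w \in O_v(x)$ associated (as in \ref{CV:orbit}) to a $K_v$-algebra map $\phi: K(x) \otimes_K K_v \to \overline{K_v}$. Writing $\bar{x}_c: \Spec K(x) \to X$ for the closed-point factorization of $x$, so that $w = (\bar{x}_c \times \mathrm{id}_{K_v}) \circ \phi^a$, the plan is to show $f^n \circ \bar{x}_c = f^m \circ \bar{x}_c$ as morphisms $\Spec K(x) \to X$ (using the residue-field-injection principle discussed below); base-changing and composing with $\phi^a$ then yields $f_v^n \circ w = f_v^m \circ w$. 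For the opposite inclusion, given $w \in \Prep(f_v)$ I would let $\bar x \in X$ be the image closed point of $\pi \circ w$; then $K(\bar x)/K$ is finite and is embedded in $\overline{K_v}$, hence in $\overline K$ via our fixed embedding. This produces an $\overline K$-point $x: \Spec(\overline K) \to \Spec K(\bar x) \to X$ with $K(x) = K(\bar x)$, and the universal property of $X_v = X \times_K \Spec(K_v)$ expresses $w$ as a unique $K_v$-algebra map $K(x) \otimes_K K_v \to \overline{K_v}$, so $w \in O_v(x)$. Factoring $\pi \circ w = \bar{x}_c \circ \tilde\phi$ through $\Spec K(x)$ and applying the same principle to $f^n \circ \pi \circ w = f^m \circ \pi \circ w$ would give $f^n \circ \bar{x}_c = f^m \circ \bar{x}_c$, which, composed with $\Spec(\overline K) \to \Spec K(x)$, yields $f^n \circ x = f^m \circ x$.

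The main subtle point, appearing in both directions, is the descent step: passing from an equality $f^n \circ \bar{x}_c \circ \gamma = f^m \circ \bar{x}_c \circ \gamma$ --- where $\gamma: \Spec L \to \Spec K(x)$ is induced by some field embedding $K(x) \hookrightarrow L$ with $L = \overline K$ or $\overline{K_v}$ --- to the equality $f^n \circ \bar{x}_c = f^m \circ \bar{x}_c$ of morphisms $\Spec K(x) \to X$. This cancellation is \emph{not} formal: algebraic field extensions are typically not epimorphisms in the category of rings. I plan to handle it by observing that a morphism $\Spec K(x) \to X$ is determined by two pieces of data --- an image closed point $\bar y \in X$ and a $K$-algebra embedding $K(\bar y) \hookrightarrow K(x)$ --- and that composition with $\gamma$ records $\bar y$ together with the composed embedding $K(\bar y) \hookrightarrow K(x) \hookrightarrow L$. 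Mere set-theoretic injectivity of $K(x) \hookrightarrow L$ then recovers the original embedding $K(\bar y) \hookrightarrow K(x)$ uniquely, giving the required descent.
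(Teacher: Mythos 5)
Your forward inclusion and your ``descent'' observation (a morphism from the spectrum of a field to $X$ is determined by its image point together with the residue-field embedding, so equality after composing with $\Spec(L)\to\Spec K(x)$ for an injective field extension forces equality) are fine; the latter is exactly the step the paper leaves implicit at the end of its proof. The problem is in the converse inclusion, at the sentence ``let $\bar x\in X$ be the image closed point of $\pi\circ w$; then $K(\bar x)/K$ is finite.'' Nothing guarantees this. The morphism $\pi_v\circ w\colon \Spec(\overline{K_v})\to X$ is a point with values in a field of infinite transcendence degree over $K$; its image can be any point of $X$ whose residue field embeds into $\overline{K_v}$ over $K$, including the generic point, and in that case $K(\bar x)$ does not embed into $\overline K$ and no algebraic point $x$ with $w\in O_v(x)$ exists. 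Showing that the image is in fact a closed point (equivalently, that $K(\bar x)/K$ is finite) is the whole content of the nontrivial direction, and it is exactly where the paper invokes Lemma~\ref{lem:preperiodic:dim:0}: from $f^m\circ(\pi_v\circ w)=f^n\circ(\pi_v\circ w)$ the point factors through the scheme $Z=(f^m\times f^n)^{-1}(\Delta)$, and the lemma proves $\dim Z\le 0$ by intersecting with the ample divisor $D$ and using $(f^m)^*(D)=d^mD+(\varphi_m)$, $(f^n)^*(D)=d^nD+(\varphi_n)$ with $d^m\neq d^n$ (so a curve $C\subseteq Z$ would give $(D\cdot C)=0$, contradicting ampleness).

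A telling symptom is that your argument never uses $D$, the ampleness hypothesis, or $d>1$ at all. The statement genuinely depends on this dynamical input: for $f=\mathrm{id}$ (or any setting where the equalizer of $f^m$ and $f^n$ can be positive-dimensional) every point of $X_v(\overline{K_v})$ is preperiodic, while $\bigcup_{x\in\Prep(f)}O_v(x)$ consists only of the points algebraic over $K$, so the asserted equality fails when $\dim X\ge 1$. Hence no proof that bypasses the zero-dimensionality argument of Lemma~\ref{lem:preperiodic:dim:0} (or an equivalent substitute) can be correct; to repair your proof, insert that lemma to justify that $\pi_v\circ w$ factors through a closed point of $X$, after which your construction of $x$ and your descent step go through.
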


\begin{proof}
Clearly $\bigcup_{x \in \Prep(f)} O_{v}(x) \subseteq \Prep(f_{v})$.
Conversely, we suppose that $x \in \Prep(f_{v})$, that is,
$f_{v}^m \circ x = f_{v}^n \circ x$ for some $m > n \geq 0$.
Let $\pi_{v} : X_{v} \to X$ be the projection.
Then $\pi_{v} \circ f_{v}^m \circ x =\pi_{v} \circ f_{v}^n \circ x$.
Note that the following diagram is commutative:
\[
\xymatrix{
X_{v} \ar[r]^{f_{v}} \ar[d]_{\pi_{v}} &  X_{v} \ar[d]^{\pi_{v}} \\
X \ar[r]^{f}  & X,
}
\]
so that we have $ f^m \circ \pi_{v} \circ x = f^n \circ \pi_{v} \circ x$.
Therefore, Lemma~\ref{lem:preperiodic:dim:0} below, 
there are a closed point $\xi$ of $X$ and a homomorphism
$K(\xi) \to \overline{K_v}$ such that $\pi_{v} \circ x$ is given by 
the composition $\Spec(\overline{K_v}) \to \Spec(K(\xi)) \to X$, so that
the assertion follows.
\end{proof}

\begin{Lemma}
\label{lem:preperiodic:dim:0}
Let $V$ be a projective variety over a field $k$.
Let $f : V \to V$ and $g : V \to V$ be surjective endomorphisms
of $V$ and let $D$ be an $\RR$-Cartier divisor on $V$.
We assume the following:
\begin{enumerate}
\renewcommand{\labelenumi}{(\arabic{enumi})}
\item
$D$ is ample, that is, there are ample Cartier divisors $D_1, \ldots, D_r$ on $V$ and
$a_1, \ldots, a_r \in \RR_{>0}$ with $D = a_1 D_1 + \cdots + a_r D_r$.

\item
There are $\phi, \psi \in \Rat(V)^{\times}_{\RR}$ and
$a, b \in \RR_{>0}$ such that
$f^*(D) = a D + (\phi)$, $g^*(D) = b D + (\psi)$ and $a \not= b$.
\end{enumerate}
If $\Omega$ is a field over $k$, $x \in V(\Omega)$ and $f(x) = g(x)$,
then there are a closed point $\xi$ of $V$ and a homomorphism $k(\xi) \to \Omega$
such that $x$ coincides with the composition of $\Spec(\Omega) \to \Spec(k(\xi)) \to V$.
\end{Lemma}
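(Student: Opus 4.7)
The plan is to argue by contradiction. Let $\xi$ denote the image point of $x$ in $V$ (possibly non-closed), so that $x$ factors canonically as $\Spec(\Omega) \to \Spec(k(\xi)) \xrightarrow{\iota_{\xi}} V$ corresponding to an injective field homomorphism $\sigma : k(\xi) \hookrightarrow \Omega$. Let $Z := \overline{\{\xi\}} \subseteq V$ be the Zariski closure with its reduced structure, and set $r := \dim Z$. It suffices to derive a contradiction from the assumption $r \geq 1$.

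The first step would be to show that $f|_Z = g|_Z$ as morphisms $Z \to V$. A morphism from the spectrum of a field to any scheme is determined by its image point together with the induced residue-field map; the injectivity of $\sigma$ combined with the hypothesis $f \circ x = g \circ x$ therefore forces $f \circ \iota_{\xi} = g \circ \iota_{\xi}$ as morphisms $\Spec(k(\xi)) \to V$. Since $V$ is separated (being projective), the equalizer of $f|_Z$ and $g|_Z$ is a closed subscheme of $Z$, and it contains the generic point of $Z$; as $Z$ is integral, this equalizer must be all of $Z$, yielding $f|_Z = g|_Z$.

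Next I would extract a numerical contradiction. By the relation $f^{*}D - aD = (\phi)$ and the fact that a principal $\RR$-Cartier divisor is numerically trivial on the complete variety $V$, the divisors $f^{*}D$ and $aD$ have the same intersection numbers with every cycle; hence
\[(f^{*}D)^{r} \cdot Z = a^{r}(D^{r} \cdot Z), \qquad (g^{*}D)^{r} \cdot Z = b^{r}(D^{r} \cdot Z).\]
On the other hand, the identity $f|_Z = g|_Z$ together with the projection formula yields
\[(f^{*}D)^{r} \cdot Z = \bigl((f|_Z)^{*}D\bigr)^{r} = \bigl((g|_Z)^{*}D\bigr)^{r} = (g^{*}D)^{r} \cdot Z.\]
Combining these, $(a^{r}-b^{r})(D^{r}\cdot Z) = 0$. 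Since $D$ is ample and $r \geq 1$, the Nakai--Moishezon criterion (extended by $\RR$-linearity to the presentation of $D$ as a positive combination of ample Cartier divisors) gives $(D^{r}\cdot Z) > 0$, so $a^{r} = b^{r}$; positivity of $a,b$ then forces $a=b$, contradicting the hypothesis.

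The main delicacy would be ensuring that the identifications used for $\RR$-Cartier intersection numbers hold as stated: namely, that $f^{*}D$ and $aD$ are numerically equivalent on $V$, and that $\iota_Z^{*}(f^{*}D) = (f|_Z)^{*}D$ is compatible with taking $r$-fold self-intersection via the projection formula. Both reduce to the classical Cartier case by linearity in the ample-Cartier presentation of $D$, together with standard moving arguments to avoid supports containing $Z$ or $f(Z)$.
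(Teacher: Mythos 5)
Your proof is correct and takes essentially the same route as the paper: there, one considers the equalizer $Z=(f\times g)^{-1}(\Delta)$, shows $\dim Z\leq 0$ by choosing a curve $C\subseteq Z$ (on which $f$ and $g$ agree) and computing $(a-b)(D\cdot C)=((f^*(D)-g^*(D))\cdot C)=0$ via the projection formula, contradicting ampleness, and then concludes just as you do. Your variations—working with the closure of the image of $x$ instead of the full equalizer, and using the $r$-fold self-intersection $(a^{r}-b^{r})(D^{r}\cdot Z)=0$ together with $a,b>0$ rather than restricting to a curve—are minor and do not change the underlying argument.
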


\begin{proof}
Let $Z$ denote $(f \times g)^{-1}(\Delta)$,
where $f \times g : V \to V \times V$ is a morphism given by
$x \mapsto (f(x), g(x))$ and $\Delta$ is the diagonal in $V \times V$.
It is sufficient to show that $\dim Z \leq 0$.
We assume the contrary, so that we can find
a $1$-dimensional subvariety $C$ of $V$ with $C \subseteq Z$.
Then $\rest{f}{C} = \rest{g}{C}$. In particular, $f_*(C) = g_*(C)$.
As 
\[
f^*(D) - g^*(D) = (a- b)D + (\phi\psi^{-1}),
\]
we have
\[
(a-b) (D \cdot C) = ((f^*(D) - g^*(D)) \cdot C) = (D \cdot f_*(C)) - (D \cdot g_*(C)) = 0,
\]
and hence $(D \cdot C) = 0$.
This is a contradiction because $D$ is ample.
\end{proof}

The purpose of this section is to prove the following theorem:

\begin{Theorem}
\label{thm:canonical:comp}
We assume that $D$ is ample.
If there are $v \in M_K \cup K(\CC)$ and a subvariety
$Y_{v} \subseteq X_{v}$ such that
$\dim Y_{v} \geq 1$ and 
$Y_{v} \subseteq \Supp_{\mathrm{ess}}(\Prep(f))_v^{\an}$,
then
the Dirichlet property of $\overline{D}$ does not hold.
In particular, if $\Prep(f_{v})^{\an}$ is dense in $X^{\an}_{v}$ with respect to the analytic
topology for some $v \in M_K \cup K(\CC)$,
then the Dirichlet property of $\overline{D}$ does not hold.
\end{Theorem}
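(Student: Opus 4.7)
The plan is to derive a contradiction from the Dirichlet property via Lemma~\ref{lem:non:dense}, after establishing that every preperiodic point has height zero and therefore lies in $X(\overline{K})^{\overline{D}}_{\leq 0}$.

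First, I observe that $h_{\overline{D}}(f(x)) = d\, h_{\overline{D}}(x)$ for every $x \in X(\overline{K})$. This follows from $f^*(\overline{D}) = d\overline{D} + \widehat{(\varphi)}$ together with the invariance of the height function under addition of principal adelic divisors (a consequence of the product formula). Since Lemma~\ref{lem:can:comp:nef} asserts that $\overline{D}$ is nef, we have $h_{\overline{D}} \geq 0$; for a preperiodic point satisfying $f^n(x) = f^m(x)$ with $0 \leq n < m$, the equality $d^n h_{\overline{D}}(x) = d^m h_{\overline{D}}(x)$ forces $h_{\overline{D}}(x) = 0$ since $d > 1$. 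Therefore $\Prep(f) \subseteq X(\overline{K})^{\overline{D}}_{\leq 0}$, and monotonicity of the essential support yields
\[
\Supp_{\mathrm{ess}}(\Prep(f))^{\an}_v \subseteq \Supp_{\mathrm{ess}}\big(X(\overline{K})^{\overline{D}}_{\leq 0}\big)^{\an}_v.
\]

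Next, supposing for contradiction that $\overline{D} + \widehat{(s)} \geq 0$ for some $s \in \Rat(X)^{\times}_{\RR}$, I combine Lemma~\ref{lem:non:dense} with the hypothesis $Y_v \subseteq \Supp_{\mathrm{ess}}(\Prep(f))^{\an}_v$ to deduce $|s|_{g_v} \equiv 1$ on $Y_v^{\an}$. Indeed $|s|_{g_v} \leq 1$ holds on all of $X_v^{\an}$ by the effectivity assumption, and the strictly effective locus $\{|s|_{g_v} < 1\}$ is disjoint from the essential support of non-positive points. Because the Green function $-\log|s|_{g_v}^2$ is continuous off $\Supp(D_v + (s_v))$ and extends to $+\infty$ along this support, the identity $|s|_{g_v} \equiv 1$ on $Y_v^{\an}$ forces $Y_v \cap \Supp(D_v + (s_v)) = \emptyset$, where $D_v$ and $s_v$ denote the pullbacks of $D$ and $s$ to $X_v$.

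The contradiction is then algebro-geometric. The divisor $D + (s)$ is numerically equivalent to the ample $D$, so $D_v + (s_v)$ is an effective ample $\RR$-Cartier divisor on $X_v$; its restriction to the positive-dimensional subvariety $Y_v$ would vanish as an $\RR$-Cartier divisor, contradicting positivity of degree on any curve contained in $Y_v$. For the ``in particular'' clause, density of $\Prep(f_v)^{\an}$ in $X_v^{\an}$ combined with Proposition~\ref{prop:thm:canonical:comp} (which identifies the analytification of $\bigcup_{x \in \Prep(f)} O_v(x)$ with $\Prep(f_v)^{\an}$) implies $\Supp_{\mathrm{ess}}(\Prep(f))^{\an}_v = X_v^{\an}$ by the remark following the definition of the essential support, so any positive-dimensional subvariety $Y_v \subseteq X_v$ satisfies the hypothesis of the first part. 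The step I expect to require the most care is the passage from the pointwise identity $|s|_{g_v} \equiv 1$ on $Y_v^{\an}$ to the scheme-theoretic disjointness $Y_v \cap \Supp(D_v + (s_v)) = \emptyset$, which rests on the fact that a Green function of an effective $\RR$-Cartier divisor blows up precisely along its support.
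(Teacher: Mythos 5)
Your proposal is correct and follows essentially the same route as the paper: preperiodic points have height zero (via $h_{\overline D}(f(x))=d\,h_{\overline D}(x)$), hence $\Prep(f)\subseteq X(\overline K)^{\overline D}_{\leq 0}$, Lemma~\ref{lem:non:dense} then keeps the essential support away from the strictly effective locus, and since $|s|_{g_v}$ vanishes on $\Supp(D+(s))_v$ this forces $Y_v\cap\Supp(D+(s))_v=\emptyset$, contradicting the ampleness of $(D+(s))_v$ on a subvariety of dimension $\geq 1$; the ``in particular'' clause is handled, as in the paper, through Proposition~\ref{prop:thm:canonical:comp} and the remark that denseness of $\Delta(S;\emptyset)^{\an}_v$ makes the essential support all of $X_v^{\an}$. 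Your intermediate reformulation ($|s|_{g_v}\equiv 1$ on $Y_v^{\an}$, read off from the blow-up of the Green function along the support) is just an explicit phrasing of the paper's inclusion $\Supp(D+(s))_v^{\an}\subseteq\{|s|_{g_v}<1\}$, so there is no substantive difference.
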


\begin{proof}

We assume that $\overline{D} + \widehat{(s)}$ is effective 
for some $s \in \Rat(X)^{\times}_{\RR}$.
Here we set $S := X(\overline{K})^{\overline{D}}_{\leq 0}$
(for the definition of $X(\overline{K})^{\overline{D}}_{\leq 0}$, see Section~\ref{sec:non:dense}).
By Lemma~\ref{lem:non:dense},
\[
\Supp_{\mathrm{ess}}\big(S\big)^{\an}_v \cap
\{ x \in X^{\an}_{v} \mid \vert s \vert_{g_{v}}(x) < 1 \} = \emptyset.
\]
Note that
if $x \in \Prep(f)$ for $x \in X(\overline{K})$, then $h_{\overline{D}}(x) = 0$.
Therefore, 
$\Prep(f) \subseteq S$, and hence
\[
\Supp_{\mathrm{ess}}\big(\Prep(f)\big)^{\an}_v \cap
\Supp(D + (s))_{v}^{\an} = \emptyset
\]
because $\Supp(D + (s))_{v}^{\an} \subseteq \{ x \in X^{\an}_{v} \mid \vert s \vert_{g_{v}}(x) < 1 \}$.
As $(D + (s))_{v}$ is ample, we can see that $Y_{v} \cap \Supp(D + (s))_{v} \not= \emptyset$.
In particular,
\[
Y_{v}^{\an} \cap \Supp(D + (s))_{v}^{\an} \not= \emptyset,
\]
which is a contradiction because $Y_{v}^{\an} \subseteq \Supp_{\mathrm{ess}}(\Prep(f))_v^{\an}$.
\end{proof}

%
%

\section{Examples}
\label{sec:examples}
In this section, we give several examples to apply Theorem~\ref{thm:canonical:comp}.

\begin{Example}[Abelian variety]
\label{exam:abelian:variety}
Let $A$ be an abelian variety over a number field $K$.
Let $D$ be an ample and symmetric $\RR$-Cartier divisor on $A$,
that is, there are ample and symmetric Cartier divisors $D_1, \ldots, D_r$ on $A$
and $a_1, \ldots, a_r \in \RR_{>0}$ with $D = a_1 D_1 + \cdots + a_r D_r$.
Then 
$[2]^*(D) = 4D + (\varphi)$ for some $\varphi \in \Rat(A)^{\times}_{\RR}$.
Let $\overline{D}$ be the canonical compactification of $D$ with respect to $[2]$.
Note that $\Prep([2]_{\sigma})$   is dense in $A_{\sigma}(\CC)$
with respect to the analytic topology
for $\sigma \in K(\CC)$.
Thus, by Lemma~\ref{lem:can:comp:nef} and Theorem~\ref{thm:canonical:comp},
$\overline{D}$ is nef and
$\overline{D}$ does not have the Dirichlet property.
\end{Example}

\begin{Example}[Projective line]
\label{exam:projective:line}
Let $E$ be an elliptic curve over a number field $K$,
$X := E/[\pm 1]$ and
$\rho : E \to X$ the natural morphism.
Note that $X \simeq \PP^1_K$ and
the endomorphism $[2] : E \to E$ descends to an endomorphism
$f : X \to X$, that is, the following diagram is commutative:
\[
\xymatrix{
E \ar[r]^{[2]} \ar[d]_{\rho} &  E \ar[d]^{\rho} \\
X \ar[r]^{f}  & X
}
\]
Clearly $\rho(\Prep([2])) \subseteq \Prep(f)$.
In particular, $\Prep([f]_{\sigma})$ is dense in $X_{\sigma}(\CC)$
with respect to the analytic topology
for $\sigma \in K(\CC)$.
Let $D$ be an ample Cartier divisor on $X$.
Note that $\rho^*(D)$ is symmetric because $\rho \circ [-1] = \rho$, 
so that there is $\varphi' \in \Rat(E)^{\times}$
with $[2]^*(\rho^*(D)) = 4 \rho^*(D) + (\varphi')$, that is,
$\rho^*(f^*(D) - 4D) =  (\varphi')$. Therefore,
if we set $\varphi = N(\varphi')^{1/2} \in \Rat(X)^{\times}_{\QQ}$, then
$f^*(D) = 4 D + (\varphi)$, where $N : \Rat(E)^{\times} \to \Rat(X)^{\times}$
is the norm map.
Let $\overline{D}$ be the canonical compactification of $D$ with respect to $f$.
By Lemma~\ref{lem:can:comp:nef} and Theorem~\ref{thm:canonical:comp},
$\overline{D}$ is nef and
the Dirichlet property of $\overline{D}$ does not hold.

\bigskip
Here let us consider a special elliptic curve $E$ due to Tate, that is,
\[
E := \Proj\left(K[X, Y, Z]/(Y^2Z + XYZ + \epsilon^2 YZ^2 - X^3)\right)
\]
where $\epsilon = (5 + \sqrt{29})/2$ and $K = \QQ(\epsilon)$.
It has a smooth model
\[
\EEE = \Proj\left( O_K[X, Y, Z]/(Y^2Z + XYZ + \epsilon^2 YZ^2 - X^3)\right)
\]
over $O_K := \ZZ[\epsilon]$.
Let $\EEE \dashrightarrow \PP^1_{O_K}$ be a rational map
induced by the homomorphism
$O_K[X, Z] \to O_K[X, Y, Z]/(Y^2Z + XYZ + \epsilon^2 YZ^2 - X^3)$, that is,
$\EEE \dashrightarrow \PP^1_{O_K}$ is the projection at $(0:1:0)$.
Note that $\EEE \dashrightarrow \PP^1_{O_K}$ actually extends to a morphism 
$\rho : \EEE \to \PP^1_{O_K}$ because the tangent line at $(0:1:0)$ is given by
$\{ Z = 0 \}$.

\begin{Claim}
There is a morphism $f : \PP^1_{O_K} \to \PP^1_{O_K}$ such that
the following diagram is commutative:
\[
\xymatrix{
\EEE \ar[r]^{[2]} \ar[d]_{\rho} &  \EEE \ar[d]^{\rho} \\
\PP^1_{O_K}\ar[r]^{f}  & \PP^1_{O_K}
}
\]
\end{Claim}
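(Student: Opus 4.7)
The plan is to descend the doubling map $[2]:\EEE\to\EEE$ through the degree-$2$ projection $\rho$, using that $\rho$ realizes $\PP^1_{O_K}$ as the categorical quotient $\EEE/[\pm 1]$ over $\Spec(O_K)$. Since $\EEE$ is the given smooth Weierstrass model, its generic fiber group law extends to a structure of smooth group scheme over $\Spec(O_K)$; composing the multiplication morphism $\EEE\times_{O_K}\EEE\to\EEE$ with the diagonal yields an endomorphism $[2]:\EEE\to\EEE$ whose restriction to $E$ is the classical doubling map. Similarly $[-1]:\EEE\to\EEE$ is a morphism; on the chart $\{Z\neq 0\}$ it is given by $(x,y)\mapsto(x,-y-x-\epsilon^2)$, from which one reads off $\rho\circ[-1]=\rho$ directly.

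Because $[2]\circ[-1]=[-1]\circ[2]$ holds on the generic fiber and both sides are morphisms of the separated scheme $\EEE$, this identity extends globally. Combining with $\rho\circ[-1]=\rho$ yields
\[
\rho\circ[2]\circ[-1]=\rho\circ[-1]\circ[2]=\rho\circ[2],
\]
so $\rho\circ[2]:\EEE\to\PP^1_{O_K}$ is $[\pm 1]$-invariant. Once one knows that $\rho$ is a categorical quotient for the $[\pm 1]$-action, the universal property then produces a unique morphism $f:\PP^1_{O_K}\to\PP^1_{O_K}$ with $f\circ\rho=\rho\circ[2]$, which is the desired $f$.

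The main step is therefore to verify that $\rho$ is the categorical quotient $\EEE/[\pm 1]$ over $\Spec(O_K)$. The morphism $\rho$ is finite and flat of degree $2$: on $\{Z\neq 0\}$ one checks that $\OO_{\EEE}$ is the free $\OO_{\PP^1_{O_K}}$-module with basis $\{1,y\}$, and the chart containing the point at infinity $(0:1:0)$ is handled by an analogous coordinate computation \rom{(}alternatively, miracle flatness applies since $\EEE$ is Cohen--Macaulay and $\PP^1_{O_K}$ is regular\rom{)}. The GIT quotient $\EEE/[\pm 1]$ then exists, $\rho$ induces a finite birational morphism $\EEE/[\pm 1]\to\PP^1_{O_K}$, and since $\PP^1_{O_K}$ is normal, Zariski's main theorem forces this morphism to be an isomorphism. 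I expect the main obstacle to lie in a careful treatment of the quotient structure in mixed characteristic near the ramification locus of $\rho$ \rom{(}the point at infinity, and the $2$-torsion in residue characteristics $\neq 2$\rom{)}, where the direct computation of invariants $(\rho_*\OO_{\EEE})^{[\pm 1]}=\OO_{\PP^1_{O_K}}$ from the explicit action on the basis $\{1,y\}$ provides the cleanest justification.
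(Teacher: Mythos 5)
Your proposal is correct, but it follows a genuinely different route from the paper's. The paper argues by explicit computation: it writes the candidate $f$ in homogeneous coordinates via the duplication formula, $f(x:z)=(x^4-\epsilon^2x^2z^2-2\epsilon^4xz^3 : 4x^3z+x^2z^2+2\epsilon^2xz^3+\epsilon^4z^4)$, which makes the diagram commute as rational maps, and then checks that the two quartics have no common zero $(x,z)\neq(0,0)$ over $\overline{\QQ}$ or over $\overline{\FF}_p$ for any prime $p$; the contradiction comes from forcing $27\epsilon^2=1$ while $\epsilon^2-5\epsilon-1=0$, i.e.\ from $26^2-27\cdot 25=1$, which says the relevant resultant is a unit of $O_K$, so the rational map is already a morphism on every fiber of $\PP^1_{O_K}$. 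You instead descend $[2]$ through the quotient: smoothness of $\EEE$ over $O_K$ gives the group-scheme structure integrally, hence $[2]$ and $[-1]$ over $O_K$; the identity $\rho\circ[-1]=\rho$ (checked on the dense Weierstrass chart and extended by separatedness) makes $\rho\circ[2]$ invariant; and the identification $\PP^1_{O_K}\cong\EEE/[\pm1]$ — via the invariant computation with basis $\{1,y\}$ (where invariance forces $2b=0$, hence $b=0$, uniformly in mixed characteristic), or via your finite-birational-onto-normal argument which conveniently bypasses the chart at infinity — lets the universal property of the finite-group quotient produce $f$. Both proofs consume the same arithmetic input, namely everywhere good reduction of Tate's curve: in the paper it appears as the unit resultant (essentially the discriminant $\epsilon^6(1-27\epsilon^2)$, of norm $1$), in your argument as the hypothesis needed for the integral group law and quotient formalism. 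Your route is more structural and generalizes immediately to any elliptic curve with everywhere good reduction, at the cost of invoking the quotient machinery (existence and universal property of $\EEE/[\pm1]$ over $O_K$, Zariski's main theorem), whereas the paper's route is elementary and yields the explicit equations of $f$ as a by-product. The steps you leave as assertions are standard and either of your two suggested justifications closes them.
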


\begin{proof}
The $x$-coordinate of $[2](P)$ for $P = (x : y : 1) \in E$ is
given by
\[
\frac{x^4 - \epsilon^2x^2 -2 \epsilon^4 x}{4x^3 + x^2 + 2\epsilon^2 x + \epsilon^4}.
\]
Therefore, if we consider a rational map $f : \PP^1_{O_K} \dashrightarrow \PP^1_{O_K}$ given by
\[
f(x : z) := (x^4 - \epsilon^2x^2z^2 -2 \epsilon^4 xz^3 : 4x^3z + x^2z^2 + 2\epsilon^2 x z^3+ \epsilon^4z^4),
\]
then the diagram 
\[
\xymatrix{
\EEE \ar[r]^{[2]} \ar[d]_{\rho} &  \EEE \ar[d]^{\rho} \\
\PP^1_{O_K}\ar@{-->}[r]^{f}  & \PP^1_{O_K}
}
\]
is commutative as rational maps,
so that we need to see that $f$ extends to a morphism
$f : \PP^1_{O_K} \to \PP^1_{O_K}$.
Let $F$ be either $\overline{\QQ}$ or $\overline{\FF}_p$,
where $\FF_p = \ZZ/p\ZZ$ for a prime $p$.
It is sufficient to show that if $(x, z) \in F^2$
satisfies a system of equations
\[
\begin{cases}
x^4 - \epsilon^2x^2z^2 -2 \epsilon^4 xz^3 = 0, \\
4x^3z + x^2z^2 + 2\epsilon^2 x z^3+ \epsilon^4z^4 = 0,
\end{cases}
\]
then $x = z = 0$.
We assume the contrary, that is, the above has a solution
$(x, z) \in F^2 \setminus \{ (0, 0) \}$.
As $z \not= 0$, we may assume $z = 1$, so that $x \not= 0$, and hence
\[
\begin{cases}
x^3 - \epsilon^2x -2 \epsilon^4  = 0, \\
4x^3 + x^2 + 2\epsilon^2 x+ \epsilon^4 = 0.
\end{cases}
\]
Therefore,
$0 = (4x^3 + x^2 + 2\epsilon^2 x+ \epsilon^4) - 4(x^3 - \epsilon^2x -2 \epsilon^4) 
= (x + 3 \epsilon^2)^2$, that is, $x = -3\epsilon^2$.
Thus, as $(-3\epsilon^2)^3 - \epsilon^2(-3\epsilon^2) -2 \epsilon^4 = 0$ and $\epsilon \not= 0$,
we have $27 \epsilon^2 = 1$. On the other hand, since
$\epsilon^2 - 5 \epsilon - 1 = 0$, we obtain $27 \cdot 5 \epsilon = -26$, so that
$27 \cdot 25 = 27 \cdot 25 \cdot 27 \epsilon^2 = (27 \cdot 5 \epsilon)^2 = 26^2$
in $F$. Note that $26^2 - 27 \cdot 25 = 1$, and hence $1 = 0$ in $F$,
which is a contradiction. 
\end{proof}

Since the norm map $N : \Rat(\EEE)^{\times} \to \Rat(\PP_{O_K}^1)^{\times}$
is a homomorphism, we have the natural extension
\[
N_{\QQ} : \Rat(\EEE)^{\times}_{\QQ} := \Rat(\EEE)^{\times} \otimes \QQ \longrightarrow
\Rat(\PP_{O_K}^1)^{\times}_{\QQ} := \Rat(\PP_{O_K}^1)^{\times} \otimes \QQ.
\]
Let $\DDD$ be an ample Cartier divisor on $\PP^1_{O_K}$.
As the following diagram 
\[
\xymatrix{
\EEE \ar[rr]^{[-1]} \ar[dr]_{\rho} & & \EEE \ar[ld]^{\rho} \\
 & \PP^1_{O_K} &
}
\]
is commutative, $\rho^*(\DDD)$ is symmetric, 
so that $[2]^*(\rho^*(\DDD)) - 4 \rho^*(\DDD) - (\varphi')$ is vertical for some $\varphi' \in \Rat(\EEE)^{\times}$.
As the class group of $\QQ(\epsilon)$ is finite, there is $\lambda \in K^{\times}_{\QQ}$ such that
\[
\rho^*(f^*(\DDD) - 4 \DDD) = [2]^*(\rho^*(\DDD)) - 4 \rho^*(\DDD) = (\lambda\varphi'),
\]
and hence, if we set $\varphi = N_{\QQ}(\lambda \varphi')^{1/2} \in \Rat(\PP^1_{O_K})^{\times}_{\QQ}$,
then $f^*(\DDD) = 4 \DDD + (\varphi)$.
Let $g_{\infty}$ be an $F_{\infty}$-invariant $\DDD$-Green function of $(C^0 \cap \Tpsh)$-type
on $\PP^1_{O_K}(\CC)$ such that $f^*(g) = 4 g - \log \vert \varphi \vert^2_{\infty}$.
By Lemma~\ref{lem:can:comp:nef} and \cite[Proposition~2.1.7]{MoAdel},
an arithmetic Cartier divisor $\overline{\DDD} := (\DDD, g_{\infty})$ on $\PP^1_{O_K}$ is nef
and, by Theorem~\ref{thm:canonical:comp},
$\overline{\DDD} + \widehat{(\psi)}$ is not effective for all
$\psi \in \Rat(\PP^1_{O_K})^{\times}_{\RR}$.
Further, $\rho^*(\overline{\DDD})$ is nef and
$\rho^*(\overline{\DDD}) + \widehat{(\phi)}$ is not effective for all
$\phi \in \Rat(\AAA)^{\times}_{\RR}$.
\end{Example}

\begin{Example}
\label{exam:converse:thm}
Here let us give an example due to Burgos i Gil,
which shows that the converse of Theorem~\ref{thm:main:result} in the introduction 
does not hold in general.

Let $E$ be an elliptic curve over $\QQ$ and $\PP^1_{\QQ} := \Proj(\QQ[x, y])$.
Let $D_1$ (resp. $D_2$) be the Cartier divisor on $E$ (resp. $\PP^1_{\QQ}$) given by
the zero point (resp. $\{ x = 0 \}$).
Then there is $\varphi \in \Rat(E)^{\times}$ with $[2]^*(D_1) = 4 D_1 + (\varphi)$.
Let $h : \PP^1_{\QQ} \to \PP^1_{\QQ}$ be the endomorphism given by $(x : y) \mapsto (x^4 : y^4)$.
Then $h^*(D_2) = 4 D_2$.
We set 
\[
X := E \times \PP^1_{\QQ},\quad 
f := [2] \times h : X \to X\quad\text{and}\quad
D := p_1^*(D_1) + p_2^*(D_2),
\]
where $p_1 : X \to E$ and $p_2 : X \to \PP^1_{\QQ}$ are
the projections to $E$ and $\PP^1_{\QQ}$, respectively.
As the following diagrams are commutative,
\[
\begin{CD}
X @>{f}>> X \\
@V{p_1}VV @VV{p_1}V \\
E @>>{[2]}> E
\end{CD}
\qquad\qquad
\begin{CD}
X @>{f}>> X \\
@V{p_2}VV @VV{p_2}V \\
\PP^1_{\QQ} @>>{h}> \PP^1_{\QQ}
\end{CD}
\]
we have
\begin{align*}
f^*(D) & = f^*(p_1^*(D_1)) + f^*(p_2^*(D_2)) = p_1^*([2]^*(D_1)) + p_2^*(h^*(D_2)) \\
& = p_1^*(4D_1 + (\varphi)) + p_2^*(4D_2) = 4 D + (p_1^*(\varphi)).
\end{align*}
Let $\overline{D}$ be the canonical compactification of $D$ with respect to $f$.

\begin{Claim}
\begin{enumerate}
\renewcommand{\labelenumi}{(\arabic{enumi})}
\item
$\overline{D}$ is nef.

\item
$\Prep(f_{v})^{\an}$ is not dense in $X^{\an}_{v}$ with respect to
the analytic topology for all $v \in M_{\QQ} \cup \{ \infty \}$,
where $\infty$ is the unique embedding $\QQ \hookrightarrow \CC$.

\item
$\Supp_{\mathrm{ess}}(\Prep(f))^{\an}_{\infty} = E(\CC) \times S^1$,
where $S^{1}= \{ (\zeta : 1) \in \PP^{1}(\CC) \mid | \zeta | = 1 \}$.

\item
The Dirichlet property of $\overline{D}$ does not hold.
\end{enumerate}
\end{Claim}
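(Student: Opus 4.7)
The plan is to address the four assertions in order. For~(1), observe that $D_1$ and $D_2$ are ample Cartier divisors on $E$ and $\PP^1_\QQ$ respectively, so the sum of pullbacks $D = p_1^*(D_1) + p_2^*(D_2)$ is an ample Cartier divisor on the product $X = E \times \PP^1_\QQ$ by the standard Segre-type statement. The hypothesis of Lemma~\ref{lem:can:comp:nef} is therefore satisfied with $r=1$ and $a_1=1$, and we conclude that $\overline{D}$ is nef.

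For~(2), a point $(x, y) \in X(\overline{K})$ is preperiodic for the product endomorphism $f = [2] \times h$ if and only if $x \in \Prep([2])$ and $y \in \Prep(h)$. Let $p_{2, v}^{\an} : X_v^{\an} \to (\PP^1_\QQ)_v^{\an}$ denote the analytification of the second projection; it is continuous and surjective, and it sends $\Prep(f_v)^{\an}$ into $\Prep(h_v)^{\an}$. Hence it suffices to show $\Prep(h_v)^{\an}$ is not dense in $(\PP^1_\QQ)_v^{\an}$ for each $v$. A direct computation with $h(t)=t^4$ yields $\Prep(h) = \{(0:1), (1:0)\} \cup \mu$, where $\mu$ denotes the group of roots of unity in $\overline{\QQ}$. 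Every such point has absolute value $0$, $\infty$, or $1$ at every place of $\QQ$, so in both the archimedean and the non-archimedean cases $\Prep(h_v)^{\an}$ sits inside a proper closed subset of $(\PP^1_\QQ)_v^{\an}$ (at $v = \infty$ it lies in $\{(0:1), (1:0)\} \cup S^1 \subsetneq \PP^1(\CC)$).

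For~(3), the inclusion $\subseteq$ follows from taking $Y = p_2^{-1}(\{(0:1), (1:0)\})$, a proper subscheme of $X$: then $\Prep(f) \setminus Y(\overline{\QQ}) = \Prep([2]) \times \mu$, whose images in $X(\CC)$ via the embedding $\sigma$ all lie in $E(\CC) \times S^1$. For the reverse inclusion $\supseteq$, fix an arbitrary proper $Y \subsetneq X$ and a target $(x_0, y_0) \in E(\CC) \times S^1$. The morphism $p_2 : Y \to \PP^1_\QQ$ either has non-dense image (so $\overline{p_2(Y)}$ is finite) or is dominant, in which case $\dim Y = 1$ forces the generic fibre to be finite. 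In both cases there is a cofinite subset of $\mu$ for which $Y \cap (E \times \{\zeta\})$ is a finite subset of $E \times \{\zeta\}$. Density of roots of unity in $S^1$ combined with density of $\Prep([2])$ in $E(\CC)$ then allows one to pick such a $\zeta$ with $\sigma(\zeta)$ close to $y_0$ together with a torsion point $x \in \Prep([2])$ satisfying $\sigma(x)$ close to $x_0$ and $(x, \zeta) \notin Y(\overline{\QQ})$; this yields $(x_0, y_0) \in \overline{\Delta(\Prep(f); Y)^{\an}_\infty}$.

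For~(4), I apply Theorem~\ref{thm:canonical:comp} with $v = \infty$ and $Y_v := E_\sigma \times \{(\zeta_0 : 1)\}$ for any fixed root of unity $\zeta_0$. Then $\dim Y_v = 1$ and, by~(3), $Y_v^{\an} = E(\CC) \times \{(\zeta_0 : 1)\} \subseteq E(\CC) \times S^1 = \Supp_{\mathrm{ess}}(\Prep(f))^{\an}_\infty$, and the theorem yields that $\overline{D}$ does not satisfy the Dirichlet property. I expect the main technical obstacle to be the $\supseteq$ inclusion in~(3), where one must simultaneously approximate $(x_0, y_0)$ by a preperiodic point of $f$ lying outside an arbitrary proper subvariety $Y$; this relies on the density of $\Prep([2])$ in $E(\CC)$ together with the dimension argument on the fibration $Y \to \PP^1_\QQ$. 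The remaining steps are routine verifications.
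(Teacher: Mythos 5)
Your proposal is correct and follows essentially the same route as the paper: (1) via Lemma~\ref{lem:can:comp:nef} after noting $D$ is ample, (2) by projecting to $\PP^1$ and using that the preperiodic points of $t\mapsto t^4$ are $0$, $\infty$ and roots of unity, (3) by observing that only finitely many fibres $E\times\{\zeta\}$ can lie in a proper subscheme $Y$ and then using density of torsion points and of roots of unity, and (4) by applying Theorem~\ref{thm:canonical:comp} to a one-dimensional fibre inside $E(\CC)\times S^1$. The only (harmless) divergence is in the non-archimedean part of (2): the paper first reduces points of $\Prep(h_{p})$ over $\overline{\QQ_p}$ to algebraic preperiodic points via the argument of Proposition~\ref{prop:thm:canonical:comp} (Lemma~\ref{lem:preperiodic:dim:0}), whereas you should note explicitly that the same direct computation for $t\mapsto t^4$ identifies $\Prep(h_v)$ over $\overline{K_v}$ as $\{0,\infty\}\cup\{\text{roots of unity}\}$, so that $|z|_w\in\{0,1,\infty\}$ on all of $\Prep(h_v)^{\an}$, which confines it to a proper closed subset of the Berkovich line.
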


\begin{proof}
(1) follows from Lemma~\ref{lem:can:comp:nef}.

(2) As $(p_2)_{v}^{\an} : X_{v}^{\an} \to (\PP^1_{\QQ})^{\an}_{v}$ is surjective and
$(p_2)_{v}^{\an}(\Prep(f_{v})^{\an}) \subseteq \Prep(h_{v})^{\an}$,
it is sufficient to show that $\Prep(h_{v})^{\an}$ is not dense in $(\PP^1_{\QQ})^{\an}_{v}$.
Note that
\[
\Prep(h) = \{ (0:1), (1:0) \} \cup \{ (\zeta, 1) \mid \text{$\zeta \in \overline{\QQ}$ and $\zeta^m = 1$ for some $m \in \ZZ_{>0}$} \},
\]
so that the assertion is obvious if $v = \infty$. We assume that $v = p$ for some prime $p$.
Let $w \in \Prep(h_{p})^{\an} \cap U^{\an}_{\QQ_p}$,
where $U$ is the Zariski open set of $\PP^1_{\QQ}$ given by
$U := \{ x \not= 0,\ y \not= 0 \}$ and
$U_{\QQ_p} := U \times_{\Spec(\QQ)} \Spec(\QQ_p)$.
In the same way as Proposition~\ref{prop:thm:canonical:comp},
there is $\xi \in \Prep(h)$ such that $w$ is one of valuations arising from $\xi$, that is, if we set
\[
\QQ(\xi) \otimes \QQ_p = K_1 \oplus \cdots \oplus K_r \quad (\text{the sum of finite extension fields over $\QQ_p$}),
\]
then $w$ is the valuation of some $K_i$. 
Put $z := X/Y$.
As $z(\xi)^m = 1$ for some $m \in \ZZ_{>0}$, we obtain $z^m = 1$ at $K_i$, so that
$\vert z \vert_{w} = 1$.
Therefore we have
\[
\Prep(h_{p})^{\an} \cap U^{\an}_{\QQ_p} \subseteq \{ w \in U^{\an}_{\QQ_p} \mid \vert z \vert_{w} = 1 \},
\]
and hence $\Prep(h_{p})^{\an}$ is not dense.

(3) We need to see 
$\overline{\Prep(f) \setminus Y(\CC)} = E(\CC) \times S^{1}$
for any proper subscheme $Y$ of $E \times \PP^1_{\QQ}$.
Note that
\[
\overline{\Prep(f)} = E(\CC) \times S^{1}
\quad\text{and}\quad
\overline{\Prep(f)} \setminus Y(\CC) \subseteq
\overline{\Prep(f) \setminus Y(\CC)},
\]
so that it is sufficient to check
$E(\CC) \times S^1 \subseteq \overline{(E(\CC) \times S^{1}) \setminus Y(\CC)}$.

We set
$T = \{ \zeta \in S^1 \mid E(\CC) \times \{ \zeta \} \subseteq Y(\CC) \}$.
Let us see that $T$ is finite. Otherwise,
as 
$E(\CC) \times T \subseteq Y(\CC)$
and $E(\CC) \times T$ is Zariski dense in $E(\CC) \times \PP^1(\CC)$,
we have $Y(\CC) = E(\CC) \times \PP^1(\CC)$, which is
a contradiction.

Since $\overline{(E(\CC) \times \{ \zeta \}) \setminus Y(\CC)}
= E(\CC) \times \{ \zeta \}$ for $\zeta \in S^1 \setminus T$,
we obtain
\[
E(\CC) \times (S^1 \setminus T) \subseteq \overline{E(\CC) \times S^1 \setminus Y(\CC)}.
\]
Thus the assertion follows because
$\overline{S^1 \setminus T} = S^1$.

(4)
follows from (2) and Theorem~\ref{thm:canonical:comp}.
\end{proof}
\end{Example}

\renewcommand{\theTheorem}{\arabic{section}.\arabic{subsection}.\arabic{Theorem}}
\renewcommand{\theClaim}{\arabic{section}.\arabic{subsection}.\arabic{Theorem}.\arabic{Claim}}

\section{
Measure-theoretical approach to the study of Dirichlet property}
\label{sec:non:dense}
The purpose of the section is to study the Dirichlet property in a functional point of view. Our method consists of introducing some (possible non-linear) functionals on the spaces of continuous functions on the analytic fibers of the arithmetic variety. The Dirichlet property leads to conditions on the supports of these functionals.

We fix a geometrically integral  projective scheme $X$ of dimension $d$ defined  over a number field $K$ and denote by $\pi:X\rightarrow\Spec K$ the structural morphism. In the following subsection, we will establish an abstract framework to study the consequences of the Dirichlet property by the functional approach. We then specify the theorem for different choices of the functionals, notably those coming from the asymptotic maximal slope and the volume function. 

\subsection{A formal functional analysis on Dirichlet property}\label{Sub:formalDirichlet}
\setcounter{Theorem}{0}

Let $V$ be a vector subspace of $\widehat{\Div}_{\mathbb R}(X)$ containing all principal divisors and 
let $V_+$ denote the subset of all effective adelic arithmetic Cartier divisors in $V$. Let $C_\circ$ be a subset of $V$ verifying the following conditions~:
\begin{enumerate}[(a)]
\item for any $\overline D\in C_\circ$ and 
$\lambda>0$, one has $\lambda\overline D\in C_\circ$;
\item for any $\overline D_0\in C_\circ$ and  
$\overline D\in V_+$, there exists $\varepsilon>0$ such that $\overline D_0+\varepsilon\overline D\in C_\circ$ for any $\varepsilon\in\mathbb R$ with $0\leq \varepsilon\leq\varepsilon_0$;
\item for any $\overline D\in C_\circ$ and 
$\phi\in\Rat(X)_{\mathbb R}^{\times}$, one has $\overline D+\widehat{(\phi)}\in C_\circ$.
\end{enumerate}
In other terms, $C_\circ$ is a cone in $V$ which is open in the directions in $V_+$ and invariant 
under translations by a principal divisor.

Assume given a map
$\mu : C_{\circ} \to \RR$ which verifies the following properties~:
\begin{enumerate}[(1)]
\item there exists a positive number $a$ such that $\mu(t\overline D)=t^a\mu(\overline D)$ for all adelic arithmetic $\mathbb R$-Cartier divisor $\overline D\in C_0$ and $t>0$;
\item 
for any $\overline D\in C_\circ$ and  
$\phi\in\Rat(X)^\times_{\mathbb R}$, one has  $\mu(\overline D+\widehat{(\phi)})=\mu(\overline D)$. 
\end{enumerate}

For $\overline{D} \in C_{\circ}$ and $\overline{E} \in V_+$, 
we define $\nabla^+_{\overline{E}}\mu(\overline{D})$ to be
\[
\nabla^+_{\overline{E}}\mu(\overline{D}) =
\limsup_{\epsilon \to 0+} \frac{\mu(\overline{D} + \epsilon \overline{E}) - \mu(\overline{D})}{\epsilon},
\]
which might be $\pm\infty$. Note that, for any $\overline D\in C_\circ$, the function $\overline E\mapsto\nabla_{\overline E}^+\mu(\overline D)$ is positively homogeneous. Moreover, for any $\overline E\in V$ and 
$t>0$, one has $\nabla_{\overline E}^+\mu(t\overline D)=t^{a-1}\nabla_{\overline E}^+(\overline D)$.   In addition to (1) and (2), assume the following property: 
\begin{enumerate}[(1)]
\setcounter{enumi}{2}
\item there exists a map $\nabla_\mu:\widehat{\Div}_{\mathbb R}(X)_+\times C_\circ\rightarrow\mathbb R\cup\{\pm\infty\}$ such that \[\nabla_{\mu}(\overline E,\overline D)=\nabla_{\overline E}^+\mu(\overline D)\quad \text{for $\overline E\in V_+$ and $\overline D\in C_\circ$},\] where $\widehat{\Div}_{\mathbb R}(X)_+$ denotes the set of all effective adelic arithmetic $\mathbb R$-Cartier divisors.
\end{enumerate}

Denote by $C_{\circ\circ}$ the subset of $C_{\circ}$ of adelic arithmetic $\mathbb R$-Cartier divisor $\overline D$ such that, the map $\overline E\mapsto\nabla_\mu(\overline E,\overline D)$ preserves the order, namely, for any couple $(\overline E_1,\overline E_2)$ of  elements in $\widehat{\Div}_{\mathbb R}(X)_+$ such that $\overline E_1\leq \overline E_2$, one has $\nabla_{\mu}(\overline E_1,\overline D)\leq\nabla_{\mu}(\overline E_2,\overline D)$. 
If $\overline D$ is an element in $C_{\circ\circ}$, then the map $\nabla_{\mu}$ defines, for any $v\in M_K\cup K(\mathbb C)$, a non-necessarily additive functional
\[\Psi_{\overline D,v}^{\mu}:C^0(X_v^{\mathrm{an}})_+\longrightarrow [0,+\infty],\quad\Psi_{\overline D,v}^{\mu}(f_v):=\nabla_\mu({\overline{O}(f_v)},\overline{D}),\]
where $C^0(X_v^{\mathrm{an}})_+$ denotes the cone of non-negative continuous functions on $X_v^{\mathrm{an}}$.

\begin{Definition}
We define the \emph{support} of $\Psi_{\overline D,v}^{\mu}$ to be the set $\Supp(\Psi_{\overline D,v}^{\mu})$ of all $x\in X_v^{\mathrm{an}}$ such that $\Psi_{\overline D,v}^{\mu}(f_v)>0$ for any non-negative continuous function $f_v$ on $X_v^{\mathrm{an}}$ verifying $f_v(x)>0$. Note that $F_{\infty}(\Supp(\Psi^{\mu}_{\overline{D},\sigma})) = \Supp(\Psi^{\mu}_{\overline{D},\bar{\sigma}})$
for $\sigma \in K(\CC)$ because $\Psi^{\mu}_{\overline{D},\sigma}(f_{\sigma}) =
\Psi^{\mu}_{\overline{D},\bar{\sigma}}(F_{\infty}^*(f_{\sigma}))$ for
$f_{\sigma} \in C^0(X^{\an}_{\sigma})_{+}$.
\end{Definition}

\begin{Theorem}\label{Thm:consequenceDirichlet}
Let $\overline{D}$ be an element of  $C_{\circ\circ}$ with $\mu(\overline{D}) = 0$.  If $s$ is an element of $\Rat(X)_{\mathbb R}^\times$ with $\overline D+\widehat{(s)}\geq 0$, then 
\[
\Supp(\Psi_{\overline D,v}^{\mu})\cap \{ x \in X_v^{\an} \mid | s |_{g_v} < 1 \} = \varnothing
\]
for any $v\in M_K\cup K(\mathbb C)$.
\end{Theorem}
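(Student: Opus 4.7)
The plan is to argue by contradiction. Suppose there exists $x_0 \in \Supp(\Psi^\mu_{\overline D,v})$ with $|s|_{g_v}(x_0) < 1$, equivalently $(-\log|s|^2_{g_v})(x_0) > 0$. I would build a non-negative test function $f_v$ concentrated near $x_0$ such that $\overline O(f_v) \leq \overline D + \widehat{(s)}$ in $\widehat{\Div}_{\mathbb R}(X)$, and then exploit the order-preserving property of $\nabla_\mu(\cdot,\overline D)$ (available because $\overline D \in C_{\circ\circ}$) together with a direct calculation showing $\nabla_\mu(\overline D + \widehat{(s)},\overline D) = 0$.

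First, I would construct $f_v$. Since $|s|_{g_v}$ is continuous on $X^{\an}_v$, pick an open neighbourhood $U$ of $x_0$ on which $-\log|s|^2_{g_v}$ is bounded below by some positive constant. Take a continuous $f_v \geq 0$ supported in $U$, with $f_v(x_0) > 0$ and $f_v \leq \alpha \cdot (-\log|s|^2_{g_v})$ on all of $X^{\an}_v$, where $\alpha = 1$ if $v \in M_K$ and $\alpha = 2$ if $v \in K(\CC)$; the factor $\alpha$ absorbs the $\tfrac12$ appearing in the definition of $\overline O(f_v)$ at archimedean places. The inequality $\overline O(f_v) \leq \overline D + \widehat{(s)}$ then reduces to checking Green functions place by place; at the conjugate place $\bar v$ when $v$ is archimedean, the required bound follows from the $F_\infty$-invariance of the family of Green functions of $\overline D$.

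Next I would chain the monotonicity with the formula for $\nabla_\mu$. The order-preserving property of $\overline E \mapsto \nabla_\mu(\overline E,\overline D)$ on $\widehat{\Div}_{\mathbb R}(X)_+$ yields
\[
\Psi^\mu_{\overline D,v}(f_v) = \nabla_\mu(\overline O(f_v),\overline D) \leq \nabla_\mu(\overline D + \widehat{(s)},\overline D).
\]
Because $V$ contains all principal divisors, $\overline D + \widehat{(s)} \in V \cap \widehat{\Div}_{\mathbb R}(X)_+ = V_+$, so by extension property (3) the right-hand side is the one-sided directional limsup of $\mu$ at $\overline D$ in the direction $\overline D + \widehat{(s)}$. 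Writing $\overline D + \varepsilon(\overline D + \widehat{(s)}) = (1+\varepsilon)\overline D + \widehat{(s^{\varepsilon})}$, which lies in $C_\circ$ by axioms (a) and (c), properties (1) and (2) of $\mu$ combined with the hypothesis $\mu(\overline D) = 0$ give
\[
\mu\bigl((1+\varepsilon)\overline D + \widehat{(s^{\varepsilon})}\bigr) = \mu\bigl((1+\varepsilon)\overline D\bigr) = (1+\varepsilon)^a \mu(\overline D) = 0.
\]
Hence $\nabla_\mu(\overline D + \widehat{(s)},\overline D) = 0$, so $\Psi^\mu_{\overline D,v}(f_v) \leq 0$; this contradicts the defining property of the support, which forces $\Psi^\mu_{\overline D,v}(f_v) > 0$ whenever $f_v \geq 0$ with $f_v(x_0) > 0$.

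The main bookkeeping difficulty will be the construction of $f_v$ with the correct archimedean normalisation and the verification of the effectivity inequality $\overline O(f_v) \leq \overline D + \widehat{(s)}$ at the conjugate place; conceptually the argument mirrors the device used in Lemma~\ref{lem:non:dense}, but the role of the height function is now played by the more flexible functional $\nabla_\mu(\cdot,\overline D)$, and its vanishing on $\overline D + \widehat{(s)}$ must be extracted by combining the homogeneity and principal-divisor invariance of $\mu$ with the extension property (3).
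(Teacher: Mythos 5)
Your argument is correct and is essentially the paper's own proof: both dominate $\overline O(f_v)$ by $\overline D+\widehat{(s)}$, invoke the order-preservation available because $\overline D\in C_{\circ\circ}$, and compute $\nabla^+_{\overline D+\widehat{(s)}}\mu(\overline D)=a\,\mu(\overline D)=0$ from the homogeneity and principal-divisor invariance of $\mu$, concluding via the definition of the support. The only cosmetic difference is that the paper uses the single global test function $f_v=\min\{-\log|s|^2_{g_v},1\}$ instead of a bump function at a hypothetical point $x_0$, and your factor $\alpha=2$ at archimedean places is harmless but unnecessary, since the $\tfrac12$ in the definition of $\overline O(f_v)$ already works in your favour.
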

\begin{proof} 
We set $\overline D{}'=\overline D+\widehat{(s)} = (D', g')$ and $f_v = \min \{ g'_v, 1 \}$.
Thus, as 
\[
0 \leq \overline{O}(f_v) \leq \overline{D}{}'
\]
and $\overline{D} \in C_{\circ\circ}$, one has
\[
0 =  \nabla_\mu((0,0),\overline{D})\leq \Psi_{\overline{D},v}(f_v) = \nabla_\mu({\overline{O}(f_v)},\overline{D}) \leq \nabla_\mu(\overline D{}',\overline D)=\nabla^+_{\overline{D}{}'}\mu(\overline{D}).
\]
On the other hand, by using the properties (1) and (2), one obtains
\[
\mu(\overline{D} + \epsilon \overline{D}{}') - \mu(\overline{D}) = \mu(\overline{D} + \epsilon \overline{D}) - \mu(\overline{D}) = ((1+\epsilon)^a-1)\mu(\overline{D}),
\]
and hence $\nabla^+_{\overline{D}{}'}\mu(\overline{D}) = a \mu(\overline{D}) = 0$.
Therefore, $\Psi_{\overline{D},v}(f_v) = 0$, so that
\[
\Supp(\Psi_{\overline{D}, v}) \cap \{ x \in X_v^{\an} \mid f_v(x) > 0 \} = \varnothing.
\]
Note that $g'_v = -\log | s |^2_{g_v}$. Thus, we can see that
\[
 \{ x \in X_v^{\an} \mid f_v(x) > 0 \} = \{ x \in X_v^{\an} \mid | s |_{g_v} < 1 \},
\]
as required.
\end{proof}

Under the assumptions of Theorem \ref{Thm:consequenceDirichlet}, we have the following corollaries.

\begin{Corollary}
Assume that the Dirichlet property holds for $\overline D$ and that $D$ is big.
For any $v\in M_K\cup K(\mathbb C)$, if 
$Y_v$
is a closed subvariety of $X_v$ of dimension $\geq 1$ 
such that 
$Y_v^{\mathrm{an}} \subseteq \Supp(\Psi_{\overline D,v}^{\mu})$, 
then 
$Y_v$ 
is contained in the the augmented base locus of $D_v$.
\end{Corollary}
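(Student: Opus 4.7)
The plan is to proceed by contradiction, assuming both $Y_v^{\an}\subseteq\Supp(\Psi^\mu_{\overline D,v})$ and $Y_v\not\subseteq\mathbb{B}_+(D_v)$. The strategy is to use the Dirichlet property together with Theorem~\ref{Thm:consequenceDirichlet} to force $D|_{Y_v}\sim_{\mathbb R}0$, and then to derive a contradiction from the standard theory of the augmented base locus, which gives that $D|_{Y_v}$ is big.

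First, the Dirichlet property furnishes $s\in\Rat(X)^\times_{\mathbb R}$ with $\overline D+\widehat{(s)}\geq 0$, so in particular $|s|_{g_v}\leq 1$ on $X_v^{\an}$. Applying Theorem~\ref{Thm:consequenceDirichlet} to this $s$ yields $\Supp(\Psi^\mu_{\overline D,v})\cap\{|s|_{g_v}<1\}=\emptyset$, and combined with the hypothesis $Y_v^{\an}\subseteq\Supp(\Psi^\mu_{\overline D,v})$ and the uniform bound $|s|_{g_v}\leq 1$, this forces $|s|_{g_v}\equiv 1$ on $Y_v^{\an}$.

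The crucial step is to promote this pointwise arithmetic identity to an algebraic statement. Since $|s|_{g_v}$ is a continuous function on $X_v^{\an}$ vanishing on $\Supp(D+(s))_v^{\an}$ (as recalled in \ref{CT:effectivesection}), the identity $|s|_{g_v}\equiv 1$ on $Y_v^{\an}$ forces $Y_v^{\an}\cap\Supp(D+(s))_v^{\an}=\emptyset$. Hence on the Zariski open neighborhood $X_v\setminus\Supp(D+(s))_v$ of $Y_v$, the effective $\mathbb R$-Cartier divisor $D+(s)$ is trivial, so its restriction satisfies $(D+(s))|_{Y_v}=0$ in $\Div_{\mathbb R}(Y_v)$. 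This yields $D|_{Y_v}\sim_{\mathbb R}-(s)|_{Y_v}$, a principal $\mathbb R$-Cartier divisor on $Y_v$; equivalently $D|_{Y_v}\sim_{\mathbb R}0$.

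To derive the contradiction, use the hypothesis $Y_v\not\subseteq\mathbb{B}_+(D_v)$: by the standard characterization and the base-change invariance of the augmented base locus, it provides an $\mathbb R$-linear equivalence $D\sim_{\mathbb R}A+N$ on $X$ with $A$ an ample $\mathbb R$-Cartier divisor, $N\geq 0$, and $Y_v\not\subseteq\Supp(N)_v$. Restricting to $Y_v$ yields $D|_{Y_v}\sim_{\mathbb R}A|_{Y_v}+N|_{Y_v}$, where $A|_{Y_v}$ is ample (the restriction of an ample divisor to a closed subvariety) and $N|_{Y_v}$ is effective; hence $D|_{Y_v}$ is big on $Y_v$. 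Since $\dim Y_v\geq 1$, no big $\mathbb R$-Cartier divisor can be $\mathbb R$-linearly trivial, which is incompatible with the conclusion of the previous paragraph. The main obstacle in this plan is the middle step, which crucially uses the continuity of $|s|_{g_v}$ and its vanishing on $\Supp(D+(s))$ to turn the arithmetic constraint from Theorem~\ref{Thm:consequenceDirichlet} into geometric triviality of $D$ along $Y_v$.
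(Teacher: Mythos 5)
Your proof is correct and takes essentially the same route as the paper's: both combine Theorem \ref{Thm:consequenceDirichlet} (which forces $Y_v^{\an}$ to avoid $\{|s|_{g_v}<1\}\supseteq \Supp(D+(s))_v^{\an}$) with the fact that $Y_v$ not being contained in the augmented base locus of $D_v$ makes the restriction of $D+(s)$ to $Y_v$ big, the only cosmetic differences being that you argue by contradiction rather than contraposition and use the ample-plus-effective decomposition characterization of the augmented base locus where the paper cites positivity of the restricted volume (ELMNP). One minor point: rather than restricting $D$ and $(s)$ separately to $Y_v$ (which needs $Y_v$ to avoid their individual supports), it is cleaner to restrict $D':=D+(s)$ itself, whose support is disjoint from $Y_v$, giving $D'|_{Y_v}=0$ against the bigness of its class.
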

\begin{proof} Let $s$ be an element of $\Rat(X)_{\mathbb R}^\times$ with $\overline D+\widehat{(s)}\geq 0$. We introduce $\overline D{}'=\overline D+\widehat{(s)}$ as in the proof of the theorem.
Assume that 
$Y_v$
is a closed subvariety of $X_v$ which is not contained in the augmented base locus of $D_v$ (which identifies with the augmented base locus of $D'_v$). Then the restriction of $D'_v$ on 
$Y_v$
is a big $\mathbb R$-Cartier divisor since the restricted volume of $D'_v$ on 
$Y_v$ 
is $>0$ (cf. \cite{ELMNP09}). Hence $[D'_v]$ has non-empty intersection with $Y_v$, which implies that 
$[D'_v]^{\mathrm{an}}\cap Y_v^{\mathrm{an}}\neq\varnothing$. 
Therefore, by the previous theorem, $Y_v^{\mathrm{an}}$ cannot be contained in the support of the functional 
$\Psi_{\overline D,v}^{\mu}$.  
\end{proof}

\begin{Corollary}
Assume that $(D \cdot C) > 0$ for any curve $C$ on $X$.
If the Dirichlet property holds for $\overline D$, then, for any $v \in M_K \cup K(\CC)$,
there is no subvariety 
$Y_v$
of $X_v$ such that 
$\dim Y_v \geq 1$ 
and 
$Y_v^{\an} \subseteq \Supp(\Psi_{\overline D,v}^{\mu})$.
\end{Corollary}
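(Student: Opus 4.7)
The plan is to argue by contradiction, following the same strategy as for the previous corollary but replacing the restricted-volume/augmented-base-locus input by a direct intersection-number argument with curves. Assume there exist $v\in M_K\cup K(\CC)$ and a subvariety $Y_v\subseteq X_v$ with $\dim Y_v\geq 1$ and $Y_v^{\an}\subseteq \Supp(\Psi^{\mu}_{\overline D,v})$. The Dirichlet property of $\overline D$ furnishes $s\in\Rat(X)_{\RR}^{\times}$ with $\overline D+\widehat{(s)}\geq 0$; set $\overline D{}'=(D',g'):=\overline D+\widehat{(s)}$, so that $D'$ is an effective $\RR$-Cartier divisor on $X$.

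First I would invoke Theorem~\ref{Thm:consequenceDirichlet} to obtain $\Supp(\Psi^{\mu}_{\overline D,v})\cap\{x\in X_v^{\an}\mid |s|_{g_v}(x)<1\}=\emptyset$. Since $|s|_{g_v}$ vanishes identically on $\Supp(D')_v^{\an}$, this open locus contains $\Supp(D')_v^{\an}$, and therefore $Y_v^{\an}\cap\Supp(D')_v^{\an}=\emptyset$. Passing from the analytic to the algebraic picture, this gives $Y_v\cap\Supp(D'_v)=\emptyset$. In particular $Y_v\not\subseteq\Supp(D'_v)$, so the restriction $D'_v|_{Y_v}$ is a well-defined effective $\RR$-Cartier divisor on $Y_v$ whose support is empty; hence $D'_v|_{Y_v}=0$.

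To derive a contradiction, pick any irreducible curve $C_v\subseteq Y_v$ (available since $\dim Y_v\geq 1$). Since $\overline D{}'-\overline D=\widehat{(s)}$ is principal, $D'_v\equiv D_v$ numerically, so $(D'_v\cdot C_v)=(D_v\cdot C_v)$. The projection $\pi:X_v\to X$ has zero-dimensional fibres, hence $\pi(C_v)$ is an irreducible curve $C$ on $X$, and the hypothesis then yields $(D\cdot C)>0$; the preservation of intersection numbers under the field extension $K\hookrightarrow K_v$ gives $(D_v\cdot C_v)>0$, whence $(D'_v\cdot C_v)>0$, contradicting $D'_v|_{Y_v}=0$. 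The only point that requires a little care is the transfer of curve-intersection positivity under the base change to $K_v$; this is standard but it is precisely what makes the weaker hypothesis $(D\cdot C)>0$ suffice in place of bigness, as was used in the preceding corollary.
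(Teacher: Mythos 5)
Your overall strategy (reduce via Theorem~\ref{Thm:consequenceDirichlet} to showing that every curve $C_v$ on $X_v$ satisfies $(D_v\cdot C_v)>0$, then note that a curve inside $Y_v$ would be disjoint from $\Supp(D'_v)$ and hence have zero intersection with $D'_v\equiv D_v$) is exactly the reduction the paper makes. But the step you dismiss as "standard" --- transferring the hypothesis $(D\cdot C)>0$ from curves on $X$ to curves on $X_v$ --- contains a genuine gap, and your justification for it is wrong. The projection $\pi\colon X_v\to X$ is a base change along the enormous field extension $K\hookrightarrow K_v$; it is not of finite type, its fibres are not zero-dimensional (they are spectra of rings like $\kappa(x)\otimes_K K_v$), and a closed point of $X_v$ typically maps to a \emph{non-closed} point of $X$. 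Consequently $\pi(C_v)$ need not be a curve on $X$ at all: for instance a line in $\PP^2_{\CC}$ cut out by an equation with coefficients algebraically independent over $\QQ$ has Zariski-dense image in $\PP^2_{\QQ}$. So $C_v$ is in general not the base change of any curve on $X$, and "preservation of intersection numbers under the field extension $K\hookrightarrow K_v$" does not apply as you invoke it.

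The paper closes this gap with a spreading-out argument, which is the actual content of its proof: the finitely many coefficients defining $C_v$ generate a finitely generated $K$-subalgebra of $K_v$, so one finds a variety $W$ over $K$ with $\Rat(W)\subseteq K_v$ and a subscheme $\CCC\subseteq X\times W$, flat over $W$, with $\CCC\times_W\Spec(K_v)=C_v$. Constancy of intersection numbers in the flat family gives $(q^*(D)\cdot\CCC_\eta)=(q^*(D)\cdot\CCC\cap p^{-1}(w))$ for closed points $w$ of $W$, and the closed fibres are honest curves over finite extensions of $K$, to which the hypothesis $(D\cdot C)>0$ applies (after pushing forward to $X$, which only rescales by the degree). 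This yields $(D_v\cdot C_v)>0$ and completes the argument. You should replace your image-of-the-curve claim by such a specialization argument; as written, that step fails.
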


\begin{proof}
It is sufficient to prove that 
$(D_{v} \cdot C_v) > 0$ 
for any curve 
$C_v$ 
on $X_v$.
Indeed, there are a variety $W$ over $K$ and a subscheme $\CCC$ of $X \times W$ such that
$\Rat(W)$ is a subfield of $K_v$,
$\CCC$ is flat over $W$ and 
$\CCC \times_{W} \Spec(K_v) = C_v$.
Let $p$ and $q$ be the projections $X \times W \to W$ and $X \times W \to X$, respectively.
By our assumption, $(q^*(D) \cdot \CCC \cap p^{-1}(w)) > 0$ for any closed point $w$ of $W$,
so that $(q^*(D) \cdot \CCC_{\eta}) > 0$, where $\eta$ is the generic point of $W$, as required.
\end{proof}

\subsection{Asymptotic maximal slope}\label{Subsec:asypmaxs}
\newcommand{\sbullet}{{\scriptscriptstyle\bullet}}
\setcounter{Theorem}{0}
In this subsection, let $V=\widehat{\mathrm{Div}}_{\mathbb R}(X)$ and  $C_\circ$ be the cone of all adelic arithmetic $\mathbb R$-Cartier divisors $\overline D$ such that $D$ is big.

Let $\overline D$ be an adelic arithmetic $\mathbb R$-Cartier divisor in $C_\circ$ and $\zeta$ be an adelic arithmetic $\RR$-Cartier divisor on $\Spec K$ with $\adeg(\zeta) = 1$, we define $\widehat{\mu}_{\max}^{\mathrm{asy},\zeta}(\overline D)$ as
\[\sup \{ t \in \RR \mid \text{$\overline{D} - t \pi^*(\zeta)$ has the Dirichlet property}\}.\]
Note that for sufficiently negative number $t$, the adelic arithmetic $\mathbb R$-Cartier divisor $\overline D-t\pi^*(\zeta)$ is big (since $D$ is a big $\mathbb R$-divisor) and therefore has the Dirichlet property. Moreover, one has $\widehat{\mu}_{\max}^{\mathrm{asy},\zeta}(\overline D)\leq \widehat{\mu}_{\mathrm{ess}}(\overline D)$ (see Conventions and terminology \ref{CV:height}). Therefore $\widehat{\mu}_{\max}^{\mathrm{asy},\zeta}(.)$ is a real-valued function on $C_\circ$. By definition, for any $t\geq 0$ and any $\overline D\in C_\circ$ one has $\widehat{\mu}_{\max}^{\mathrm{asy},\zeta}(t\overline D)=t\widehat{\mu}_{\max}^{\mathrm{asy},\zeta}(\overline D)$.

The function $\widehat{\mu}_{\max}^{\mathrm{asy},\zeta}(\overline{D})$ is actually independent of
the choice of $\zeta$. This is a consequence of the following proposition. 

\begin{Proposition}
Let $\zeta_1$ and $\zeta_2$ be adelic arithmetic $\RR$-Cartier divisors on $\Spec K$.
Then $\adeg(\zeta_1) = \adeg(\zeta_2)$ if and only if
$\zeta_1 = \zeta_2 + \widehat{(\varphi)}$ for some 
$\varphi \in K^{\times}_{\RR} (:= K^{\times} \otimes \RR)$.
\end{Proposition}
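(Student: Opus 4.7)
The forward implication will be immediate from the product formula, extended by $\mathbb{R}$-linearity. On $\Spec K$ the Cartier divisor $(\varphi)$ vanishes for every $\varphi\in K^{\times}$, so $\widehat{(\varphi)} = \sum_{v} -\log|\varphi|_{v}^{2}\,[v]$, and one computes
\[
\widehat{\deg}(\widehat{(\varphi)}) \;=\; -\sum_{v\in M_{K}\cup K(\mathbb{C})}\log|\varphi|_{v} \;=\; 0
\]
by the product formula for $K$. Extending by linearity yields $\widehat{\deg}(\widehat{(\varphi)})=0$ for every $\varphi\in K^{\times}_{\mathbb{R}}$, whence $\widehat{\deg}(\zeta_{1})=\widehat{\deg}(\zeta_{2}+\widehat{(\varphi)})=\widehat{\deg}(\zeta_{2})$.

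For the converse, my plan is to set $\eta:=\zeta_{1}-\zeta_{2}$ (so $\widehat{\deg}(\eta)=0$) and to construct $\varphi\in K^{\times}_{\mathbb{R}}$ with $\widehat{(\varphi)}=\eta$ by first killing the non-archimedean components of $\eta$ and then the archimedean ones. Let $\mathfrak{p}_{1},\ldots,\mathfrak{p}_{n}$ enumerate the finitely many primes at which $\eta$ is non-zero. Since the class group $\mathrm{Cl}(K)$ is finite, for each $i$ one can pick an integer $h_{i}\geq 1$ and $\alpha_{i}\in K^{\times}$ with $(\alpha_{i})=\mathfrak{p}_{i}^{h_{i}}$, so that $\widehat{(\alpha_{i})}_{\mathfrak{p}_{j}}=2h_{i}\log\#(\mathcal{O}_{K}/\mathfrak{p}_{i})\,\delta_{ij}$. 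Setting
\[
\beta \;:=\; \sum_{i=1}^{n}\frac{\eta_{\mathfrak{p}_{i}}}{2h_{i}\log\#(\mathcal{O}_{K}/\mathfrak{p}_{i})}\otimes\alpha_{i}\;\in\; K^{\times}_{\mathbb{R}},
\]
the difference $\eta':=\eta-\widehat{(\beta)}$ will be $F_{\infty}$-invariant, of arithmetic degree zero, and supported only at archimedean places.

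The remaining step is to realize $\eta'$ as $\widehat{(\gamma)}$ for some $\gamma\in K^{\times}_{\mathbb{R}}$, and for this I would invoke Dirichlet's unit theorem: the logarithmic embedding $\mathcal{O}_{K}^{\times}\to \mathbb{R}^{K(\mathbb{C})}$, $u\mapsto(-\log|\sigma(u)|^{2})_{\sigma}$, has image a full-rank lattice inside the $F_{\infty}$-invariant degree-zero hyperplane of $\mathbb{R}^{K(\mathbb{C})}$. Tensoring with $\mathbb{R}$ converts this into an $\mathbb{R}$-linear surjection from $\mathcal{O}_{K}^{\times}\otimes_{\mathbb{Z}}\mathbb{R}$ onto that hyperplane, which contains $\eta'$. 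Any preimage $\gamma=\sum_{j}s_{j}\otimes u_{j}\in K^{\times}_{\mathbb{R}}$ will then satisfy $\widehat{(\gamma)}_{\sigma}=\eta'_{\sigma}$ at every $\sigma\in K(\mathbb{C})$; and since each unit $u_{j}$ has trivial $\mathfrak{p}$-adic valuation at every $\mathfrak{p}\in M_{K}$, $\widehat{(\gamma)}$ vanishes at every non-archimedean place, so actually $\widehat{(\gamma)}=\eta'$. The element $\varphi:=\beta+\gamma\in K^{\times}_{\mathbb{R}}$ will then satisfy $\widehat{(\varphi)}=\widehat{(\beta)}+\widehat{(\gamma)}=\eta$, completing the argument. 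The single substantive input beyond definitions is Dirichlet's unit theorem itself, whose role here is precisely to provide the archimedean surjectivity; everything else is bookkeeping enabled by the finiteness of $\mathrm{Cl}(K)$.
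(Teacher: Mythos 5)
Your proposal is correct and follows essentially the same route as the paper's (very terse) proof: both directions rest on the product formula, and the converse is handled by first using the finiteness of the class group to cancel the non-archimedean components by a suitable $\beta\in K^{\times}_{\mathbb R}$, then applying Dirichlet's unit theorem (tensored with $\mathbb R$) to realize the remaining $F_\infty$-invariant, degree-zero archimedean vector as $\widehat{(\gamma)}$ for $\gamma\in \mathcal O_K^{\times}\otimes_{\mathbb Z}\mathbb R$. You have simply written out the bookkeeping that the paper leaves implicit.
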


\begin{proof}
It is sufficient to show that if $\adeg(\zeta) = 0$, then $\zeta = \widehat{(\varphi)}$ for some
$\varphi \in K^{\times}_{\RR}$.
We set $\zeta = \sum_{\mathfrak p \in M_K} a_{\mathfrak p} [\mathfrak p] + \sum_{\sigma \in K(\CC)} a_{\sigma} [\sigma]$.
As the class group of $K$ is finite, we may assume that $a_{\mathfrak p} = 0$ for all $\mathfrak p \in M_K$.
Therefore, Dirichlet's unit theorem implies the assertion.
\end{proof}

We shall use the expression $\widehat{\mu}_{\max}^{\mathrm{asy}}(.)$ to denote this function. By definition, for any adelic arithmetic $\mathbb R$-Cartier divisor $\zeta$ one has \[\widehat{\mu}_{\max}^{\mathrm{asy}}(\overline D+\pi^*(\zeta))=\widehat{\mu}_{\max}^{\mathrm{asy}}(\overline D)+\widehat{\deg}(\zeta).\]
This function has been introduced in the adelic line bundle setting in \cite{Chen10} in an equivalent form by using arithmetic graded linear series. We refer the readers to \S\ref{Subsec:comparison} \emph{infra} for more details.

If $\overline D$ is an adelic arithmetic $\mathbb R$-Cartier divisor verifying the Dirichlet property, then for any $\varphi\in\Rat(X)_{\mathbb R}^{\times}$, $\overline D+\widehat{(\varphi)}$ also verifies the Dirichlet property. Moreover, for any $\overline{D}{}'\geq\overline D$, the adelic arithmetic $\mathbb R$-Cartier divisor $\overline D{}'$ verifies the Dirichlet property. We deduce from these facts the following properties of the function $\widehat{\mu}_{\max}^{\mathrm{asy}}(.)$.

\begin{Proposition}
\begin{enumerate}[(1)]
\item Let $\overline D$ be an adelic arithmetic $\mathbb R$-Cartier divisor in $C_\circ$. For any $\varphi\in\Rat(X)_{\mathbb R}^{\times}$ one has $\widehat{\mu}_{\max}^{\mathrm{asy}}(\overline D+\widehat{(\varphi)})=\widehat{\mu}_{\max}^{\mathrm{asy}}(\overline D)$.
\item The function $\widehat{\mu}_{\max}^{\mathrm{asy}}(.)$ preserves the order relation, namely for $\overline D_1\leq\overline D_2$ in $C_\circ$ one has $\widehat{\mu}_{\max}^{\mathrm{asy}}(\overline D_1)\leq\widehat{\mu}_{\max}^{\mathrm{asy}}(\overline D_2)$.
\item The function $\widehat{\mu}_{\max}^{\mathrm{asy}}(.)$ is super-additive, namely \[\widehat{\mu}_{\max}^{\mathrm{asy}}(\overline D_1+\overline D_2)\geq \widehat{\mu}_{\max}^{\mathrm{asy}}(\overline D_1)+\widehat{\mu}_{\max}^{\mathrm{asy}}(\overline D_2)\] for $\overline D_1$ and $\overline D_2$ in $C_\circ$. 
\end{enumerate} 
\end{Proposition}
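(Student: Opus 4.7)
The plan is to deduce all three assertions directly from three elementary closure properties of the Dirichlet property, namely: (i) invariance under translation by a principal adelic arithmetic $\mathbb R$-Cartier divisor, (ii) stability under replacing an adelic arithmetic $\mathbb R$-Cartier divisor by a larger one, and (iii) stability under sum. Each of these is immediate from the definition: if $\overline D+\widehat{(\psi)}\geq 0$, then $\overline D+\widehat{(\varphi)}+\widehat{(\psi\varphi^{-1})}\geq 0$; if $\overline D'\geq\overline D$ and $\overline D+\widehat{(\psi)}\geq 0$, then $\overline D'+\widehat{(\psi)}\geq 0$; and the sum of two effective adelic arithmetic $\mathbb R$-Cartier divisors is effective.

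For (1), I would use property (i) combined with the fact that $\pi^*(\zeta)$ is fixed: the adelic arithmetic $\mathbb R$-Cartier divisor $\overline D-t\pi^*(\zeta)$ satisfies the Dirichlet property if and only if $(\overline D+\widehat{(\varphi)})-t\pi^*(\zeta)=\overline D-t\pi^*(\zeta)+\widehat{(\varphi)}$ does, for each real $t$. Hence the sets of admissible $t$ in the definition of $\widehat{\mu}_{\max}^{\mathrm{asy}}$ coincide, and their suprema are equal. For (2), given $\overline D_1\leq\overline D_2$, if $\overline D_1-t\pi^*(\zeta)$ has the Dirichlet property then so does $\overline D_2-t\pi^*(\zeta)$ by (ii), since $\overline D_2-t\pi^*(\zeta)\geq\overline D_1-t\pi^*(\zeta)$. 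Passing to the supremum in $t$ gives $\widehat{\mu}_{\max}^{\mathrm{asy}}(\overline D_1)\leq\widehat{\mu}_{\max}^{\mathrm{asy}}(\overline D_2)$.

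For (3), I would argue as follows. Pick real numbers $t_1<\widehat{\mu}_{\max}^{\mathrm{asy}}(\overline D_1)$ and $t_2<\widehat{\mu}_{\max}^{\mathrm{asy}}(\overline D_2)$. By definition each of $\overline D_i-t_i\pi^*(\zeta)$ satisfies the Dirichlet property, so by (iii) their sum
\[
(\overline D_1+\overline D_2)-(t_1+t_2)\pi^*(\zeta)
\]
also satisfies the Dirichlet property, giving $\widehat{\mu}_{\max}^{\mathrm{asy}}(\overline D_1+\overline D_2)\geq t_1+t_2$. Taking the supremum over such $t_1$ and $t_2$ yields the super-additivity. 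One should also verify that $\overline D_1+\overline D_2$ lies in $C_\circ$, which is clear because the sum of two big $\mathbb R$-Cartier divisors is big.

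There is no substantive obstacle here; the only small point to observe is that $\widehat{\mu}_{\max}^{\mathrm{asy}}$ is finite on $C_\circ$, which was already established above via the inequality $\widehat{\mu}_{\max}^{\mathrm{asy}}(\overline D)\leq\widehat{\mu}_{\mathrm{ess}}(\overline D)$ and the fact that for sufficiently negative $t$ the bigness of $\overline D-t\pi^*(\zeta)$ forces the Dirichlet property. Consequently the suprema manipulated in the argument above are genuine real numbers, and the three statements follow.
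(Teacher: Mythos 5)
Your proof is correct and follows essentially the same route as the paper, which simply notes that the Dirichlet property is invariant under adding $\widehat{(\varphi)}$, is preserved when passing to a larger adelic arithmetic $\mathbb R$-Cartier divisor, and (for super-additivity) under sums, and then deduces (1)--(3) by comparing the admissible sets of $t$ in the definition of $\widehat{\mu}_{\max}^{\mathrm{asy}}$. Your explicit handling of the finiteness of the suprema and of $\overline D_1+\overline D_2\in C_\circ$ just fills in details the paper leaves implicit.
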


The Theorem \ref{Thm:consequenceDirichlet} leads immediately to the following corollary.

\begin{Corollary}\label{Cor:muasy}
Let $\overline D$ be an adelic arithmetic $\mathbb R$-Cartier divisor such that $D$ is big and that $\widehat{\mu}_{\max}^{\mathrm{asy}}(\overline D)=0$. If $s$ is an element of $\Rat(X)_{\mathbb R}^\times$ with $\overline D+\widehat{(s)}\geq 0$, then 
\[
\Supp(\Psi_{\overline D,v}^{\widehat{\mu}_{\max}^{\mathrm{asy}}})\cap \{ x \in X_v^{\an} \mid | s |_{g_v} < 1 \} = \varnothing
\]
for any $v\in M_K\cup K(\mathbb C)$.
\end{Corollary}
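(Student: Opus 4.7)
The plan is to derive the corollary as a direct application of Theorem \ref{Thm:consequenceDirichlet}, with the functional $\mu$ taken to be $\widehat{\mu}_{\max}^{\mathrm{asy}}$. Accordingly, I would first fix the data required by the abstract framework of \S\ref{Sub:formalDirichlet}: take $V=\widehat{\Div}_{\mathbb R}(X)$, and let $C_\circ$ be the cone of those $\overline D\in V$ for which $D$ is big on $X$. The three axioms on $C_\circ$ are straightforward: stability under multiplication by $\lambda>0$ holds because bigness is preserved under positive scaling; stability under small perturbations by an effective adelic arithmetic $\mathbb R$-Cartier divisor follows because the set of big $\mathbb R$-Cartier divisors is open in $\Div_{\mathbb R}(X)$ and effective divisors are pseudo-effective; invariance under principal perturbation is immediate since $(\varphi)\in\Div_{\mathbb R}(X)$ is principal and therefore does not affect bigness.

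Next I would verify the properties (1), (2), (3) of the functional $\mu=\widehat{\mu}_{\max}^{\mathrm{asy}}$ required by Theorem \ref{Thm:consequenceDirichlet}. Property (1) with exponent $a=1$ is the identity $\widehat{\mu}_{\max}^{\mathrm{asy}}(t\overline D)=t\widehat{\mu}_{\max}^{\mathrm{asy}}(\overline D)$ for $t>0$, which was observed right after the definition; property (2) is the invariance $\widehat{\mu}_{\max}^{\mathrm{asy}}(\overline D+\widehat{(\varphi)})=\widehat{\mu}_{\max}^{\mathrm{asy}}(\overline D)$ stated in the Proposition just above the corollary. For property (3), since $V=\widehat{\Div}_{\mathbb R}(X)$, the whole cone $\widehat{\Div}_{\mathbb R}(X)_+$ is contained in $V_+$, and one simply defines
\[
\nabla_{\widehat{\mu}_{\max}^{\mathrm{asy}}}(\overline E,\overline D):=\nabla^+_{\overline E}\widehat{\mu}_{\max}^{\mathrm{asy}}(\overline D)
\]
for every $(\overline E,\overline D)\in\widehat{\Div}_{\mathbb R}(X)_+\times C_\circ$.

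The key step is to show that every $\overline D\in C_\circ$ actually lies in $C_{\circ\circ}$, i.e. that $\overline E\mapsto\nabla_{\widehat{\mu}_{\max}^{\mathrm{asy}}}(\overline E,\overline D)$ preserves the order on $\widehat{\Div}_{\mathbb R}(X)_+$. This is where the order-preservation of $\widehat{\mu}_{\max}^{\mathrm{asy}}(\cdot)$, stated in part (2) of the Proposition above, enters: if $\overline E_1\leq\overline E_2$, then $\overline D+\varepsilon\overline E_1\leq\overline D+\varepsilon\overline E_2$ for every $\varepsilon>0$, so
\[
\widehat{\mu}_{\max}^{\mathrm{asy}}(\overline D+\varepsilon\overline E_1)\leq\widehat{\mu}_{\max}^{\mathrm{asy}}(\overline D+\varepsilon\overline E_2),
\]
and passing to the $\limsup$ of the difference quotients as $\varepsilon\to 0+$ yields $\nabla^+_{\overline E_1}\widehat{\mu}_{\max}^{\mathrm{asy}}(\overline D)\leq\nabla^+_{\overline E_2}\widehat{\mu}_{\max}^{\mathrm{asy}}(\overline D)$, as required.

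With this in hand, the hypothesis $\widehat{\mu}_{\max}^{\mathrm{asy}}(\overline D)=0$ is precisely the assumption $\mu(\overline D)=0$ of Theorem \ref{Thm:consequenceDirichlet}, and bigness of $D$ places $\overline D$ in $C_\circ=C_{\circ\circ}$. Applying the theorem to an $s\in\Rat(X)_{\mathbb R}^\times$ with $\overline D+\widehat{(s)}\geq 0$ yields the claimed disjointness for every $v\in M_K\cup K(\mathbb C)$. I do not anticipate a serious obstacle here; the only point requiring care is the normalization check that $C_{\circ\circ}=C_\circ$, which rests entirely on the monotonicity of $\widehat{\mu}_{\max}^{\mathrm{asy}}(\cdot)$ already recorded in the Proposition preceding the corollary.
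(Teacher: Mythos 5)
Your proposal is correct and follows exactly the paper's route: the corollary is obtained as an immediate application of Theorem \ref{Thm:consequenceDirichlet} with $\mu=\widehat{\mu}_{\max}^{\mathrm{asy}}$, $V=\widehat{\Div}_{\mathbb R}(X)$ and $C_\circ$ the cone of divisors with $D$ big, the required properties being those recorded in the Proposition of \S\ref{Subsec:asypmaxs}. Your explicit check that monotonicity of $\widehat{\mu}_{\max}^{\mathrm{asy}}$ forces $C_{\circ\circ}=C_\circ$ is precisely the step the paper leaves implicit, so there is nothing to add.
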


The function $\widehat{\mu}^{\mathrm{asy}}_{\max}(.)$ is important in the study of Dirichlet's theorem. In fact, it is not only the threshold of the Dirichlet property but also the pseudo-effectivity.

\begin{Lemma}\label{Lem:continuity}
Let $(\overline D_i)_{i=1}^n$ be a family of  adelic arithmetic $\mathbb R$-Cartier divisors
on $X$, and $\overline D$ be an adelic arithmetic $\mathbb R$-Cartier divisor on $X$ such that $D$ is big. Then one has
\begin{equation*}\lim_{|\boldsymbol{t}|\rightarrow 0}\widehat{\mu}_{\max}^{\mathrm{asy}}(\overline D+t_1\overline D_1+\cdots+t_n\overline D_n)=\widehat{\mu}_{\max}^{\mathrm{asy}}(\overline D),\end{equation*}
where for $\boldsymbol{t}=(t_1,\ldots,t_n)\in\mathbb R^n$, the expression $|\boldsymbol{t}|$ denotes $\max\{|t_1|,\ldots,|t_n|\}$.
\end{Lemma}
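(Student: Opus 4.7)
The plan is to sandwich $\widehat{\mu}_{\max}^{\mathrm{asy}}(\overline D+t_1\overline D_1+\cdots+t_n\overline D_n)$ between two quantities which both converge to $\widehat{\mu}_{\max}^{\mathrm{asy}}(\overline D)$ as $|\boldsymbol{t}|\to 0$, using only the formal properties of $\widehat{\mu}_{\max}^{\mathrm{asy}}(\cdot)$ already established in this subsection: invariance under translation by a principal divisor, preservation of the order relation, $\mathbb R_+$-homogeneity, and the compatibility formula $\widehat{\mu}_{\max}^{\mathrm{asy}}(\overline A+\pi^*(\zeta))=\widehat{\mu}_{\max}^{\mathrm{asy}}(\overline A)+\widehat{\deg}(\zeta)$. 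To reduce to these manipulations I would first fix an adelic arithmetic $\mathbb R$-Cartier divisor $\zeta_0$ on $\Spec K$ with $\widehat{\deg}(\zeta_0)>0$, chosen so that its local components $\zeta_{0,v}$ are strictly positive at every place that will enter as a ``bad'' place below (only finitely many).

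The core intermediate step is the following comparison lemma: for each $i\in\{1,\ldots,n\}$ there exist $\varepsilon_i>0$, $A_i>0$, and $\phi_i^{\pm}\in\Rat(X)^{\times}_{\mathbb R}$ such that
\[
\pm\overline D_i+\widehat{(\phi_i^{\pm})}\;\leq\;\varepsilon_i\overline D+A_i\pi^*(\zeta_0).
\]
For the underlying $\mathbb R$-Cartier divisor, the bigness of $D$ means that for $\varepsilon_i$ large enough $\varepsilon_i D\pm D_i$ is big, hence $\mathbb R$-linearly equivalent to an effective $\mathbb R$-Cartier divisor; this supplies $\phi_i^{\pm}$. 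At the level of Green functions, all but finitely many non-archimedean Green functions of $\overline D$, of $\overline D_i$, and of $\widehat{(\phi_i^{\pm})}$ come from a common integral model (after possibly passing to a blow-up dominating the given models), so the inequality is automatic there. At the remaining finite set of places, the differences of continuous Green functions on compact analytic spaces are bounded, and enlarging $A_i$ absorbs them via the positive constants $A_i\zeta_{0,v}$ added by $\pi^*(\zeta_0)$.

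Granting the comparison lemma, I multiply each inequality by $|t_i|$, with the choice $\phi_i^{+}$ if $t_i\geq 0$ and $\phi_i^{-}$ if $t_i<0$ for the upper bound (and symmetrically for the lower bound), sum over $i$, and add $\overline D$. Setting $c(\boldsymbol{t}):=\sum_i|t_i|\varepsilon_i$ and $b(\boldsymbol{t}):=\sum_i|t_i|A_i$, this yields $\psi^{\pm}(\boldsymbol{t})\in\Rat(X)^{\times}_{\mathbb R}$ with
\[
(1-c(\boldsymbol{t}))\overline D-b(\boldsymbol{t})\pi^*(\zeta_0)+\widehat{(\psi^{-}(\boldsymbol{t}))}\;\leq\;\overline D+\sum_{i}t_i\overline D_i\;\leq\;(1+c(\boldsymbol{t}))\overline D+b(\boldsymbol{t})\pi^*(\zeta_0)+\widehat{(\psi^{+}(\boldsymbol{t}))}.
\]
For $|\boldsymbol{t}|$ small enough that $c(\boldsymbol{t})<1$, the $\mathbb R$-Cartier divisor $(1-c(\boldsymbol{t}))D-b(\boldsymbol{t})\pi^*(\zeta_0)_{\mathrm{fin}}$ (in fact $(1-c(\boldsymbol{t}))D$ itself) is big since bigness is an open condition, so all three terms lie in $C_{\circ}$; applying $\widehat{\mu}_{\max}^{\mathrm{asy}}$ and using its four formal properties gives
\[
(1-c(\boldsymbol{t}))\widehat{\mu}_{\max}^{\mathrm{asy}}(\overline D)-b(\boldsymbol{t})\widehat{\deg}(\zeta_0)\;\leq\;\widehat{\mu}_{\max}^{\mathrm{asy}}\Bigl(\overline D+\sum_i t_i\overline D_i\Bigr)\;\leq\;(1+c(\boldsymbol{t}))\widehat{\mu}_{\max}^{\mathrm{asy}}(\overline D)+b(\boldsymbol{t})\widehat{\deg}(\zeta_0).
\]
Both sides tend to $\widehat{\mu}_{\max}^{\mathrm{asy}}(\overline D)$ as $c(\boldsymbol{t}),b(\boldsymbol{t})\to 0$, i.e.\ as $|\boldsymbol{t}|\to 0$, giving the lemma.

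The main obstacle is the comparison lemma, and within it the simultaneous control of Green functions at infinitely many places. The algebraic inequality at the level of $\mathbb R$-Cartier divisors is a clean consequence of bigness; what requires care is choosing $\zeta_0$ and the integral models so that, away from a finite set of places where $A_i\pi^*(\zeta_0)$ does the work, the Green function inequality is forced by a single common integral model. Once that technical setup is in place, the rest of the argument is a purely formal squeeze using the four established properties of $\widehat{\mu}_{\max}^{\mathrm{asy}}(\cdot)$.
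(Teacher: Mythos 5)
Your argument is correct and follows the same overall skeleton as the paper's proof: a squeeze using positive homogeneity, invariance under principal divisors, the shift formula for $\pi^*(\zeta)$, and order preservation of $\widehat{\mu}_{\max}^{\mathrm{asy}}(\cdot)$. Where you genuinely diverge is in how the domination of the perturbations by $\overline D$ is obtained. The paper first translates $\overline D$ by a $\pi^*(\zeta)$ to make it \emph{arithmetically} big, then reduces to the case where each $\overline D_i$ is big and effective (using the fact that any adelic arithmetic $\mathbb R$-Cartier divisor is $\mathbb R$-linearly equivalent to an $\mathbb R$-linear combination of big and effective ones), and finally picks $a>0$ with $a\overline D\pm\overline D_1$ $\mathbb R$-linearly equivalent to effective adelic divisors, which is exactly what arithmetic bigness provides; the squeeze $(1-at)\widehat{\mu}_{\max}^{\mathrm{asy}}(\overline D)\leq\widehat{\mu}_{\max}^{\mathrm{asy}}(\overline D-t\overline D_1)\leq\widehat{\mu}_{\max}^{\mathrm{asy}}(\overline D)$ then finishes the proof. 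You instead keep the $\overline D_i$ general and prove the comparison $\pm\overline D_i+\widehat{(\phi_i^{\pm})}\leq\varepsilon_i\overline D+A_i\pi^*(\zeta_0)$ by hand, using only the \emph{geometric} bigness of $D$ for the underlying divisors and handling the Green functions place by place (common integral model at almost all places, absorption of the finitely many remaining places by $A_i\pi^*(\zeta_0)$). Your comparison lemma is in substance a direct verification that $\varepsilon_i\overline D+A_i\pi^*(\zeta_0)\mp\overline D_i$ has the Dirichlet property, which the paper would instead deduce from the machinery of arithmetic bigness; your route is more self-contained and avoids both the big-plus-effective decomposition and the preliminary translation of $\overline D$, at the cost of the model-theoretic bookkeeping.

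One small imprecision to fix in that bookkeeping: even at places $\mathfrak p$ where $\overline D$, $\overline D_i$ and $\widehat{(\phi_i^{\pm})}$ all come from a common model $\XXX_U$, the combined model divisor $\varepsilon_i\DDD\mp\DDD_i-(\phi_i^{\pm})_{\XXX_U}$ need not be effective; it differs from the Zariski closure of the (effective) generic fiber by a vertical $\mathbb R$-divisor, which may have negative components. The inequality is therefore ``automatic'' only outside the finitely many closed points of $U$ supporting that vertical discrepancy, so these places must be added to your finite ``bad'' set. Since the bad set remains finite and the same absorption by $A_i\zeta_{0,v}$ (with $\zeta_{0,v}>0$ there) applies, this does not affect the validity of the proof, but the quantifier order matters: choose $\phi_i^{\pm}$, identify all bad places (including these), and only then fix $\zeta_0$ and enlarge $A_i$.
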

\begin{proof}
If we replace $\overline D$ by $\overline D+\pi^*(\zeta)$, where $\zeta$ is an adelic arithmetic Cartier $\mathbb R$-divisor on $\Spec K$, both sides of the equality to be proved differ the initial value by $\widehat{\deg}(\zeta)$. Hence one may assume that $\overline D$ is big. Moreover,
without loss of generality, one may assume that $(\overline D_i)_{i=1}^n$ are adelic arithmetic Cartier divisors which are big 
and effective
(by possibly augmenting the number of adelic arithmetic Cartier divisors in the family). In fact, 
each $\overline D_i$ is $\RR$-linearly equivalent to
an $\mathbb R$-linear combination of big and effective arithmetic Cartier divisors. Then by using the fact that the function $\widehat{\mu}_{\max}^{\mathrm{asy}}(.)$ preserves the order, one obtains
\[\begin{split}\widehat{\mu}_{\max}^{\mathrm{asy}}(\overline D-|\boldsymbol{t}|(\overline D_1+\cdots+\overline D_n))&\leq \widehat{\mu}_{\max}^{\mathrm{asy}}(\overline D+t_1\overline D_1+\cdots+t_n\overline D_n)\\&\leq \widehat{\mu}_{\max}^{\mathrm{asy}}(\overline D+|\boldsymbol{t}|(\overline D_1+\cdots+\overline D_n)).
\end{split}\]
Therefore we have reduced the problem to the case where $n=1$ and $\overline D_1$ is a big and effective adelic arithmetic Cartier divisor. Let $a>0$ be a real number such that $a\overline D-\overline D_1$ and $a\overline D+\overline D_1$ are both 
$\RR$-linearly equivalent to effective adelic $\RR$-Cartier divisors.
By the positive homogenity of the function $\widehat{\mu}_{\max}^{\mathrm{asy}}(.)$, for any $t>0$  one has
\[\widehat{\mu}_{\max}^{\mathrm{asy}}(\overline D)\geq \widehat{\mu}_{\max}^{\mathrm{asy}}(\overline D-t\overline D_1)\geq(1-at)\widehat{\mu}_{\max}^{\mathrm{asy}}(\overline D)\]
Hence $\displaystyle\lim_{t\rightarrow0+}\widehat{\mu}_{\max}^{\mathrm{asy}}(\overline D-t\overline D_1)=\widehat{\mu}_{\max}^{\mathrm{asy}}(\overline D)$. Similarly, one has $\displaystyle\lim_{t\rightarrow0+}\widehat{\mu}_{\max}^{\mathrm{asy}}(\overline D+t\overline D_1)=\widehat{\mu}_{\max}^{\mathrm{asy}}(\overline D)$. The result is thus proved.
\end{proof}

\begin{Proposition}[%
cf. Proposition~\ref{Pro:cripseudoeff}
]\label{Pro:big:cripseudoeff}
Let $\overline D$ be an adelic arithmetic $\mathbb R$-Cartier divisor such that $D$ is big. Then $\overline D$ is big (resp. pseudo-effective) if and only if $\widehat{\mu}_{\max}^{\mathrm{asy}}(\overline D)>0$ (resp. $\widehat{\mu}_{\max}^{\mathrm{asy}}(\overline D)\geq 0$).
\end{Proposition}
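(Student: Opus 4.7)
The plan is to prove the bigness equivalence first and then derive the pseudo-effectivity equivalence as a formal consequence via Lemma~\ref{Lem:continuity} and the continuity of $\widehat{\vol}$. Two preliminary facts will be used throughout: any big adelic arithmetic $\mathbb R$-Cartier divisor automatically has the Dirichlet property (a nonzero element of $\widehat{H}^0(X, n\overline D)$ for some large $n$ yields an $s \in \Rat(X)^\times_{\mathbb R}$ with $\overline D+\widehat{(s)} \geq 0$), and $\widehat{\mu}_{\max}^{\mathrm{asy}}(\overline D+\pi^*(\zeta))=\widehat{\mu}_{\max}^{\mathrm{asy}}(\overline D)+\widehat{\deg}(\zeta)$ holds directly from the definition.

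For the bigness equivalence, the forward direction is easy: if $\overline D$ is big, continuity of $\widehat{\vol}$ under uniform perturbations of Green functions yields $\epsilon>0$ with $\overline D-\epsilon\pi^*(\zeta)$ still big, hence having the Dirichlet property, so $\widehat{\mu}_{\max}^{\mathrm{asy}}(\overline D)\geq\epsilon>0$. For the converse, pick $0<t<\widehat{\mu}_{\max}^{\mathrm{asy}}(\overline D)$ and $\phi\in\Rat(X)^\times_{\mathbb R}$ with $\overline D+\widehat{(\phi)}\geq t\pi^*(\zeta)$, and write $\overline D+\widehat{(\phi)}=\overline F+t\pi^*(\zeta)$ with $\overline F$ effective. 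The underlying divisor $F:=D+(\phi)$ remains big, and the Green functions of $\overline F+t\pi^*(\zeta)$ strictly dominate those of $\overline F$ by the positive constants $t\zeta_v$ at the archimedean places. Counting $O_K$-multiples $\alpha s$ of sections $s\in H^0(X,nF)$ subject to $\|\alpha s\|_{v,ng_{\overline F}+nt\zeta_v,\sup}\leq 1$ and applying Minkowski's theorem, one obtains (using $\widehat{\deg}(\zeta)=1$ and $h^0(X,nF)\sim\vol(F)n^d/d!$) a lower bound of the form $\log\#\widehat{H}^0(X,n(\overline F+t\pi^*(\zeta)))\geq c\,nt\,h^0(X,nF)+O(n^d)$ for a positive constant $c$ depending only on $K$, whence $\widehat{\vol}(\overline D)=\widehat{\vol}(\overline F+t\pi^*(\zeta))\geq c\,t\,(d+1)\,\vol(F)>0$.

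For the pseudo-effectivity equivalence, first assume $\widehat{\mu}_{\max}^{\mathrm{asy}}(\overline D)\geq 0$. For every $\epsilon>0$ there is $\phi_\epsilon\in\Rat(X)^\times_{\mathbb R}$ with $\overline D+\widehat{(\phi_\epsilon)}\geq -\epsilon\pi^*(\zeta)$; for any big adelic arithmetic $\mathbb R$-Cartier divisor $\overline E$ this gives $\overline D+\overline E+\widehat{(\phi_\epsilon)}\geq\overline E-\epsilon\pi^*(\zeta)$, whose right-hand side is big for small $\epsilon>0$ by continuity of $\widehat{\vol}$, so $\overline D+\overline E$ is big and $\overline D$ is pseudo-effective. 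Conversely, if $\overline D$ is pseudo-effective, pick any arithmetic ample $\overline A$; then $\overline D+\epsilon\overline A$ is big for every $\epsilon>0$, the already-proved bigness equivalence gives $\widehat{\mu}_{\max}^{\mathrm{asy}}(\overline D+\epsilon\overline A)>0$, and Lemma~\ref{Lem:continuity} yields $\widehat{\mu}_{\max}^{\mathrm{asy}}(\overline D)\geq 0$ upon letting $\epsilon\to 0$.

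The main obstacle is the positive lower bound $\widehat{\vol}(\overline F+t\pi^*(\zeta))>0$ in the converse of the bigness equivalence, where the geometric bigness of $F$ must be converted into a genuinely positive arithmetic volume with the aid of the positive shift of the Green functions; this is precisely the step where the hypothesis that $D$ is big plays an essential role, and it relies on the Minkowski-type estimate (or equivalently, on an arithmetic Hilbert--Samuel/Siu-type inequality) sketched above.
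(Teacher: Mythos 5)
Your outer structure (the easy direction, and both halves of the pseudo-effectivity statement via Lemma~\ref{Lem:continuity} and continuity of $\widehat{\vol}$) is fine and close to the paper's, but the key step --- $\widehat{\mu}_{\max}^{\mathrm{asy}}(\overline D)>0$ implies $\overline D$ big --- has a genuine gap, and it sits exactly at the Minkowski estimate you flag as the ``main obstacle.'' Writing $\overline D+\widehat{(\phi)}=\overline F+t\pi^*(\zeta)$ with $\overline F\geq 0$ only tells you that the \emph{single} section $1$ is small for every $n\overline F$; it gives no control whatsoever on the sup norms of the other elements of $H^0(X,nF)$, nor on the covolume of the lattice of sections. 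Your count of $O_K$-multiples $\alpha s$ with $s\in H^0(X,nF)$ therefore does not produce the claimed bound $\log\#\widehat{H}^0(X,n(\overline F+t\pi^*(\zeta)))\geq c\,nt\,h^0(X,nF)+O(n^d)$: for a general $s$ the product $\prod_v\|s\|_{v,\sup}$ can exceed $e^{nt}$ by an arbitrarily large (indeed exponentially growing in $n$) factor, and by the product formula multiplication by $\alpha\in O_K$ cannot decrease this product, so for most $s$ \emph{no} multiple $\alpha s$ lands in $\widehat{H}^0$. To make the counting work one needs roughly $c\,n^{d}$ \emph{linearly independent} sections of $n\overline F$ with norms bounded by $e^{O(n^{0})}$ (equivalently, a lower bound on a positive proportion of the successive slopes of the normed section lattices), and that is not a formal consequence of ``$\overline F$ effective and $F$ big''; it is the real content of the implication being proved.

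This is precisely why the paper does not argue this way: after perturbing to reduce to adelic arithmetic $\mathbb Q$-Cartier divisors (choose rational $b_i<a_i$ close to $a_i$ in a decomposition $\overline D=a_1\overline D_1+\cdots+a_n\overline D_n$ into big pieces, using Lemma~\ref{Lem:continuity} to keep $\widehat{\mu}_{\max}^{\mathrm{asy}}(\overline D_\varepsilon)>0$), it invokes \cite[Proposition 3.11]{Chen10b} (compare Proposition~\ref{Pro:equivalenceform}), whose proof rests on the convergence of Harder--Narasimhan polygons of the graded linear series: positivity of the asymptotic maximal slope forces a positive \emph{proportion} of the successive slopes of $H^0(X,nD)$ to be bounded below by $\delta n$, and only then does a Minkowski-type count give $\widehat{\vol}>0$. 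So either quote that result (as the paper does) or reprove its polygon/graded-linear-series argument; the one-section-plus-geometric-bigness input in your sketch is not enough to close this step. The remaining three implications in your proposal are correct, and your derivation of ``$\widehat{\mu}_{\max}^{\mathrm{asy}}(\overline D)\geq 0$ implies pseudo-effective'' (via $\overline D+\overline E+\widehat{(\phi_\epsilon)}\geq\overline E-\epsilon\pi^*(\zeta)$) is a slight, legitimate variant of the paper's superadditivity argument.
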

\begin{proof}
Assume that $\overline D$ is big. 
Let $\zeta$ be an arithmetic $\mathbb R$-Cartier divisor on $\Spec K$ such that $\widehat{\deg}(\zeta)>0$. Since $\overline D$ is big, for sufficiently small $t>0$, the adelic arithmetic $\mathbb R$-Cartier divisor $\overline D-t\pi^*(\zeta)$ is big, and hence verifies the Dirichlet property. Therefore one has \[\widehat{\mu}_{\max}^{\mathrm{asy}}(\overline D)=t\widehat{\deg}(\zeta)+\widehat{\mu}_{\max}^{\mathrm{asy}}(\overline D-t\pi^*(\zeta))\geq t\widehat{\deg}(\zeta)>0.\]

Conversely, assume that 
$\widehat{\mu}_{\max}^{\mathrm{asy}}(\overline D)>0$. We write $\overline D$ as an $\mathbb R$-linear combination 
\[\overline D=a_1\overline D_1+\cdots+a_n\overline D_n,\]
where $(\overline D_i)_{i=1}^n$ are big adelic arithmetic $\mathbb Q$-Cartier divisors. For any $\varepsilon>0$, we can choose $b_1,\ldots,b_n$ in $\mathbb Q$ such that $a_i-\varepsilon\leq b_i< a_i$. Then $\overline D_\varepsilon=b_1\overline D_1+\cdots+b_n\overline D_n$ is an adelic arithmetic $\mathbb Q$-Cartier divisor and $D-D_\varepsilon$ is big. Moreover, if $\varepsilon$ is sufficiently small, $D_\varepsilon$ is big and $\widehat{\mu}_{\max}^{\mathrm{asy}}(\overline D_\varepsilon)>0$. By \cite[Proposition 3.11]{Chen10b} (see also Proposition \ref{Pro:equivalenceform}), one obtains that $\overline D_{\varepsilon}$ is big. Therefore $\overline D$ is also big.

Assume that $\overline D$ is pseudo-effective. Let $\overline D{}'$ be a big adelic arithmetic $\mathbb R$-Cartier divisor. For any $t>0$, $t\overline D{}'+\overline D$ is big. Therefore $\widehat{\mu}_{\max}^{\mathrm{asy}}(t\overline D{}'+\overline D)>0$. By the continuity of the function $\widehat{\mu}_{\max}^{\mathrm{asy}}(.)$ (see Lemma \ref{Lem:continuity}), one obtains that $\widehat{\mu}_{\max}^{\mathrm{asy}}(\overline D)\geq 0$. 

Conversely, assume that $\widehat{\mu}_{\max}^{\mathrm{asy}}(\overline D)\geq 0$. If $\overline D{}'$ is a big adelic arithmetic $\mathbb R$-Cartier divisor, then $D+D'$ is big since $D$ is pseudo-effective and $D'$ is big. Moreover, one has \[\widehat{\mu}_{\max}^{\mathrm{asy}}(\overline D+\overline D{}')\geq \widehat{\mu}_{\max}^{\mathrm{asy}}(\overline D)+\widehat{\mu}_{\max}^{\mathrm{asy}}(\overline D{}')>0.\]
Hence $\overline D+\overline D{}'$ is big. Therefore $\overline D$ is pseudo-effective. 
\end{proof}

\begin{Remark}
Let $\overline D$ be an adelic arithmetic $\mathbb R$-Cartier divisor on $X$ such that $D$ is big. The above proposition shows that
\[\widehat{\mu}_{\max}^{\mathrm{asy}}(\overline D)=\max\{t\in\mathbb R\,|\,\overline D-t\pi^*(\zeta)\text{ is pseudo-effective}\},\]
where $\zeta$ is any adelic arithmetic $\mathbb R$-Cartier divisor such that $\widehat{\deg}(\zeta)=1$. Note that big adelic arithmetic $\mathbb R$-Cartier divisors verify the Dirichlet property. Therefore, in order to construct counter-examples to the Dirichlet property, one should examine adelic arithmetic $\mathbb R$-Cartier divisor of the form $\overline D-\widehat{\mu}_{\max}^{\mathrm{asy}}(\overline D)\pi^*(\zeta)$, where $\widehat{\deg}(\zeta)=1$. Note that the functionals $\Psi_{\overline D,v}^{\widehat{\mu}_{\max}^{\mathrm{asy}}}$ remain invariant if one replaces $\overline D$ by a translation of $\overline D$ by the pull-back of an adelic arithmetic $\mathbb R$-Cartier divisor on $\Spec K$. Therefore the study of these functionals will very possibly provide a large family of counter examples to the Dirichlet property and suggest a way to characterize it. 
\end{Remark}



\subsection{Volume function}\label{Subsec:volfuc}
\setcounter{Theorem}{0}
In this subsection, we still assume that $V=\widehat{\mathrm{Div}}_{\mathbb R}(X)$ and  $C_\circ$ is the cone of all adelic arithmetic $\mathbb R$-Cartier divisors $\overline D$ such that $D$ is big.

Let $\mu$ be the arithmetic volume function $\widehat{\vol}$ (see Conventions and terminology \ref{CT:effectivesection}). Note that one has $\widehat{\vol}(t\overline D)=t^{d+1}\widehat{\vol}(\overline D)$. Moreover, for any $\phi\in\Rat(X)_{\mathbb R}^{\times}$, one has $\widehat{\vol}(\overline D+\widehat{(\phi)})=\widehat{\vol}(\overline D)$. Therefore the function $\mu=\widehat{\vol}(.)$ verifies the conditions (1)--(3) in \S\ref{Sub:formalDirichlet}. Moreover, the volume function preserves the order relation. Namely for $\overline D_1\leq\overline D_2$ one has $\widehat{\vol}(\overline D_1)\leq\widehat{\vol}(\overline D_2)$.

A direct consequence of Theorem \ref{Thm:consequenceDirichlet} is the following corollary.
\begin{Corollary}\label{Cor:volder}
Let $\overline D$ be an adelic arithmetic $\mathbb R$-Cartier divisor such that $D$ is big and that $\widehat{\vol}(\overline D)=0$. If $s$ is an element of $\Rat(X)_{\mathbb R}^\times$ with $\overline D+\widehat{(s)}\geq 0$, then 
\[
\Supp(\Psi_{\overline D,v}^{\widehat{\vol}})\cap \{ x \in X_v^{\an} \mid | s |_{g_v} < 1 \} = \varnothing
\]
for any $v\in M_K\cup K(\mathbb C)$.
\end{Corollary}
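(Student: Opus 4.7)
My plan is to deduce Corollary~\ref{Cor:volder} as a direct specialization of Theorem~\ref{Thm:consequenceDirichlet} with the choice $V = \widehat{\Div}_{\mathbb R}(X)$, $C_\circ = \{\overline D\in V \mid D\text{ is big}\}$, and $\mu = \widehat{\vol}$. To do so I only need to verify the axioms (a)--(c) on $C_\circ$, the axioms (1)--(3) on $\mu$, and the inclusion $\overline D\in C_{\circ\circ}$ for the given $\overline D$.

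First I would check the conditions on $C_\circ$. Positive scaling invariance (a) is immediate because $D$ big implies $\lambda D$ big for every $\lambda>0$. For (b), if $D_0$ is big and $\overline E\in V_+$, then $D_0+\varepsilon E$ remains big for all sufficiently small $\varepsilon\geq 0$ since bigness is an open condition in $\Div_{\mathbb R}(X)$. Translation invariance (c) follows from the invariance of bigness under $\RR$-linear equivalence. Next, I would verify the properties of $\mu=\widehat{\vol}$: homogeneity $\widehat{\vol}(t\overline D)=t^{d+1}\widehat{\vol}(\overline D)$ with $a=d+1$, and the invariance $\widehat{\vol}(\overline D+\widehat{(\phi)})=\widehat{\vol}(\overline D)$, both of which are standard properties of the arithmetic volume recalled in the paper. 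Since $V=\widehat{\Div}_{\mathbb R}(X)$, condition (3) is automatic: simply define $\nabla_\mu(\overline E,\overline D):=\nabla^+_{\overline E}\widehat{\vol}(\overline D)$ for $\overline E\in\widehat{\Div}_{\mathbb R}(X)_+=V_+$, which takes values in $\mathbb R\cup\{\pm\infty\}$ as allowed.

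The key point, and the only one requiring a small argument, is that every $\overline D\in C_\circ$ actually lies in $C_{\circ\circ}$, i.e., that $\overline E\mapsto\nabla^+_{\overline E}\widehat{\vol}(\overline D)$ preserves the order. Suppose $\overline E_1\leq \overline E_2$ in $\widehat{\Div}_{\mathbb R}(X)_+$. Then for every $\varepsilon>0$ one has $\overline D+\varepsilon\overline E_1\leq \overline D+\varepsilon\overline E_2$, so by monotonicity of $\widehat{\vol}$ (recalled at the start of \S\ref{Subsec:volfuc}),
\[
\frac{\widehat{\vol}(\overline D+\varepsilon\overline E_1)-\widehat{\vol}(\overline D)}{\varepsilon}\;\leq\;\frac{\widehat{\vol}(\overline D+\varepsilon\overline E_2)-\widehat{\vol}(\overline D)}{\varepsilon}.
\]
Taking $\limsup_{\varepsilon\to 0+}$ of both sides yields $\nabla^+_{\overline E_1}\widehat{\vol}(\overline D)\leq\nabla^+_{\overline E_2}\widehat{\vol}(\overline D)$, as required.

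With all hypotheses verified and with the additional assumption $\widehat{\vol}(\overline D)=0$ from the statement, Theorem~\ref{Thm:consequenceDirichlet} applies directly and yields the desired conclusion
\[
\Supp(\Psi_{\overline D,v}^{\widehat{\vol}})\cap\{x\in X_v^{\an}\mid |s|_{g_v}(x)<1\}=\varnothing
\]
for every $v\in M_K\cup K(\mathbb C)$. The main (and really the only) subtlety here is the monotonicity step above; the remaining content of the corollary is a bookkeeping check that the general framework of \S\ref{Sub:formalDirichlet} applies to the arithmetic volume.
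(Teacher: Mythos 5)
Your proposal is correct and follows the paper's own route: the paper likewise verifies the homogeneity (with $a=d+1$), the invariance under principal divisors, and the monotonicity of $\widehat{\vol}$, and then obtains the corollary as a direct consequence of Theorem~\ref{Thm:consequenceDirichlet}. Your explicit check that monotonicity of $\widehat{\vol}$ passes to the difference quotients, hence that every $\overline D\in C_\circ$ lies in $C_{\circ\circ}$, is exactly the step the paper leaves implicit.
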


\subsection{Self-intersection number}
\setcounter{Theorem}{0}
In this subsection, let $V$ be the subspace of $\widehat{\Div}_{\mathbb R}(X)$ consisting of integrable adelic arithmetic $\mathbb R$-Cartier divisors. Let $C_\circ=V$. We define the function $\mu:C_\circ\rightarrow\mathbb R$ as $\mu(\overline D):=\widehat{\deg}(\overline D{}^{d+1})$. The function $\mu$ verifies the conditions (1) and (2) of \S\ref{Sub:formalDirichlet}. For $\overline D\in C_\circ$ and $\overline E\in V_+$, we define $\nabla_{\overline E}^+\mu(\overline D)$ as follows
\[\nabla_{\overline E}^+\mu(\overline D)=\lim_{\varepsilon\rightarrow 0+}\frac{\mu(\overline D+\varepsilon\overline E)-\mu(\overline D)}{\varepsilon}=(d+1)\widehat{\deg}(\overline D{}^d\cdot\overline E).\]
The function extends naturally to the whole space $\widehat{\Div}_{\mathbb R}(X)$ of adelic arithmetic $\mathbb R$-Cartier divisors (see Conventions and terminology \ref{CV:measure}) and thus defines a map $\nabla_\mu:\widehat{\Div}_{\mathbb R}(X)\times C_\circ\rightarrow\mathbb R$. Note that the subset $C_{\circ\circ}$ of $\overline D\in C_\circ$ such that $\nabla_{\mu}(.,\overline D)$ preserves the order contains all nef adelic arithmetic $\mathbb R$-Cartier divisors. For $\overline D\in C_{\circ\circ}$ and any place in $v\in M_K\cup K(\mathbb C)$, the map $\nabla_\mu(.,\overline D)$ defines a positive functional $\Psi_{\overline D,v}^{\widehat{\deg}}$ on $C^0(X_v^{\mathrm{an}})_+$  which sends $f_v\in C^0(X_v^{\mathrm{an}})$ to $\widehat{\deg}(\overline D{}^d\cdot\overline O(f_v))$. It is an additive functional on $C^0(X_v^{\mathrm{an}})_+$, which coincides with the functional $(\overline D{}^d)_v$ defined in Conventions and terminology \ref{CV:measure} when $\overline D$ is nef.   Therefore, from Theorem \ref{Thm:consequenceDirichlet}, we obtain the following corollary.

\begin{Corollary}\label{Cor:suppD}
Let $\overline D$ be an adelic arithmetic $\mathbb R$-Cartier divisor on $X$. Assume that
$\overline D$ is nef and $\widehat{\deg}(\overline D{}^{d+1})=0$.
If $s$ is an element of $\Rat(X)^{\times}_{\RR}$ with $\overline D{}':=\overline{D} + \widehat{(s)} \geq 0$,
then, for any $v \in M_K \cup K(\CC)$,
\[
\Supp(\Psi_{\overline D,v}^{\widehat{\deg}}) \cap \{x\in X_v^{\mathrm{an}}\,|\,|s|_{g_v}<1\} = \emptyset.
\]

\end{Corollary}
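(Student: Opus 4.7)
The plan is to recognize this corollary as an immediate specialization of Theorem \ref{Thm:consequenceDirichlet} to the setting $V=\{\text{integrable adelic arithmetic }\mathbb R\text{-Cartier divisors}\}$, $C_\circ = V$, and $\mu(\overline D) = \widehat{\deg}(\overline D{}^{d+1})$, so the bulk of the argument is simply a verification of hypotheses that has already been sketched in the paragraph preceding the corollary.

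First I would confirm that the three axioms (1)--(3) of \S\ref{Sub:formalDirichlet} are satisfied by this $\mu$, with exponent $a=d+1$. Axiom (1) is the multilinearity of the arithmetic intersection number together with symmetry. Axiom (2), the invariance $\mu(\overline D+\widehat{(\phi)})=\mu(\overline D)$, is a consequence of the fact that a principal adelic arithmetic $\mathbb R$-Cartier divisor has vanishing arithmetic intersection with any family of integrable divisors, so expanding $(\overline D+\widehat{(\phi)})^{d+1}$ by multilinearity collapses to the single term $\overline D{}^{d+1}$. Axiom (3), the extension of the directional derivative to all effective adelic arithmetic $\mathbb R$-Cartier divisors, uses the continuous extension of the intersection number recalled in Conventions \ref{CV:measure}: define
\[\nabla_\mu(\overline E,\overline D):=(d+1)\widehat{\deg}(\overline D{}^d\cdot\overline E)\quad\text{for }\overline E\in\widehat{\Div}_{\mathbb R}(X),\ \overline D\in C_\circ,\]
and this agrees with $\nabla^+_{\overline E}\mu(\overline D)$ for $\overline E\in V_+$ by expanding $\mu(\overline D+\varepsilon\overline E)$ as a polynomial in $\varepsilon$.

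Next I would verify that the nef divisor $\overline D$ of the hypothesis lies in the subcone $C_{\circ\circ}$. Recall that a nef adelic arithmetic $\mathbb R$-Cartier divisor is in particular relatively nef, and that the arithmetic intersection of $d$ relatively nef divisors with an effective divisor is non-negative; this is precisely the positivity statement recalled in \S\ref{CV:measure} for the signed measures $(\overline D{}^d)_v$ attached to a nef family. Consequently, for any two effective $\overline E_1\leq \overline E_2$ in $\widehat{\Div}_{\mathbb R}(X)_+$, the difference $\overline E_2-\overline E_1$ is effective and thus
\[\nabla_\mu(\overline E_2,\overline D)-\nabla_\mu(\overline E_1,\overline D)=(d+1)\widehat{\deg}(\overline D{}^d\cdot(\overline E_2-\overline E_1))\geq 0,\]
which is exactly the defining property of $C_{\circ\circ}$.

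With these verifications in place, the hypotheses of Theorem \ref{Thm:consequenceDirichlet} are met: $\overline D\in C_{\circ\circ}$ and $\mu(\overline D)=\widehat{\deg}(\overline D{}^{d+1})=0$. Applying that theorem directly yields
\[\Supp(\Psi_{\overline D,v}^{\widehat{\deg}})\cap\{x\in X_v^{\mathrm{an}}\mid |s|_{g_v}(x)<1\}=\varnothing\]
for every $v\in M_K\cup K(\mathbb C)$, which is the desired conclusion. There is really no obstacle here beyond bookkeeping; the substantive work has been carried out in establishing Theorem \ref{Thm:consequenceDirichlet} and in the continuity/positivity properties of the arithmetic intersection pairing recalled in Conventions \ref{CV:measure}. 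The only point to be careful about is that one must extend $\nabla_\mu$ to the whole of $\widehat{\Div}_{\mathbb R}(X)$ rather than to $V$ alone in order to evaluate the functional $\Psi_{\overline D,v}^{\widehat{\deg}}$ on $\overline O(f_v)$, which is automatic once the continuous extension of the $(d+1)$-linear form is invoked.
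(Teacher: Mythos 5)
Your proposal is correct and follows essentially the same route as the paper, which obtains this corollary directly from Theorem \ref{Thm:consequenceDirichlet} after exactly the setup you describe ($V$ the integrable adelic arithmetic $\mathbb R$-Cartier divisors, $C_\circ=V$, $\mu(\overline D)=\widehat{\deg}(\overline D{}^{d+1})$, $\nabla_\mu(\overline E,\overline D)=(d+1)\widehat{\deg}(\overline D{}^d\cdot\overline E)$, and the observation that $C_{\circ\circ}$ contains all nef divisors). One small correction to your verification of order-preservation: the non-negativity of $\widehat{\deg}(\overline D{}^d\cdot\overline E)$ for effective $\overline E$ is not a consequence of relative nefness and the positivity of the measures $(\overline D{}^d)_v$ alone (these only handle the Green-function part of $\overline E$); the cycle part of $\overline E$ contributes a height term, whose non-negativity uses the full nefness of $\overline D$ (non-negativity of $h(\overline D,\ldots,\overline D;\cdot)$ on effective cycles, combined with Proposition \ref{Pro:PLformula}), which is available to you from the hypothesis.
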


\section{Comparison of the functionals}\label{Subsec:comparison}

Let $\pi:X\rightarrow\Spec K$ be a projective and geometrically integral scheme defined over a number field $K$ and $\overline D$ be an adelic arithmetic $\mathbb R$-Cartier divisor on $X$ such that $D$ is big. In view of the applications of Theorem \ref{Thm:consequenceDirichlet} to different functionals, notably Corollaries \ref{Cor:muasy}, \ref{Cor:volder} and \ref{Cor:suppD}, a natural question is the comparison between the functionals $\Psi_{\overline D,v}^{\widehat{\mu}_{\max}^{\mathrm{asy}}}$, $\Psi_{\overline D,v}^{\widehat{\vol}}$ and $\Psi_{\overline D,v}^{\widehat{\deg}}$. For this purpose, we relate the function $\widehat{\mu}_{\max}^{\mathrm{asy}}(\overline D)$ to the graded linear series of $\overline D$ and show that it is always bounded from below by $\widehat{\vol}(\overline D)/(d+1)\vol(D)$.

\subsection{Asymptotic maximal slope and graded linear series}
\setcounter{Theorem}{0}
If $\overline D=(D,g)$ is an adelic arithmetic $\mathbb R$-Cartier divisor on $X$, we denote by $V(\overline D)$ the $K$-vector subspace of $\Rat(X)$ generated by 
$\hat{H}^0(X,\overline D)$ (see Conventions and terminology \ref{CT:effectivesection}). Let $\zeta$ be an adelic arithmetic $\mathbb R$-Cartier divisor on $\Spec K$ such that $\widehat{\deg}(\zeta)=1$. For any integer $n\geqslant 1$ and any real number $t$, we denote by $V_n^{\zeta,t}(\overline D)$ the $K$-vector subspace of $R(X)$ defined as
\[V_n^{\zeta,t}(\overline D):=V(n\overline D-\pi^*(nt\zeta)).\] 
The direct sum $V_\sbullet^{\zeta,t}(\overline D)$ forms a graded sub-$K$-algebra of $V_\sbullet(D):=\bigoplus_{n\geq 0}H^0(X,nD)$. Moreover, $(V_\sbullet^{\zeta,t})_{t\in\mathbb R}$ forms a multiplicatively concave family of graded linear series of the $\mathbb R$-divisor $D$. Namely, for 
$(t_1,t_2)\in\mathbb R^2$
and $(n,m)\in\mathbb N^2$ one has
\begin{equation}\label{Equ:sous-multp}V_n^{\zeta,t_1}(\overline D)V_m^{\zeta,t_2}(\overline D)\subset V_{n+m}^{\zeta,t}(\overline D)\end{equation}  
where $t=(nt_1+mt_2)/(n+m)$.

For any integer $n\geq 1$, let $\lambda_{n}^{\zeta}(\overline D)$ be the supremum of the set \[\{t\in\mathbb R\,|\, V_n^{\zeta,t}(\overline D)\neq\{0\}\}.\] 
The function $\lambda_n^\zeta(.)$ preserves the order. Namely, if $\overline D$ and $\overline D{}'$ are two adelic arithmetic $\mathbb R$-Cartier divisors on $X$ such that $\overline D\leq\overline D{}'$, then one has $\lambda_n^{\zeta}(\overline D)\leq\lambda_n^{\zeta}(\overline D{}')$. Therefore by \cite[Lemma 2.6]{Boucksom_Chen} (the Hermitian line bundle case), the sequence $(\lambda_n^{\zeta}(\overline D))_{n\geq 1}$ is bounded from above. Moreover,
the relation \eqref{Equ:sous-multp} shows that the sequence $(n\lambda_n^{\zeta}(\overline D))_{n\geq 1}$ is super-additive. Therefore the sequence $(\lambda_n^{\zeta}(\overline D))_{n\geq 1}$ converges in $\mathbb R$. The following proposition relate the limit of the sequence $(\lambda_n^{\zeta}(\overline D))_{n\geq 1}$ to the asymptotic maximal slope of $\overline D$. In particular, the function $\widehat{\mu}_{\max}^{\mathrm{asy}}(.)$ coincides with the one defined in \cite[\S4.2]{Chen10b} in the adelic line bundle setting.

\begin{Proposition}\label{Pro:equivalenceform}
Let $\overline D$ be an adelic arithmetic $\mathbb R$-Cartier divisor such that $D$ is big, then for any $\zeta\in\widehat{\Div}_{\mathbb R}(\Spec K)$ with $\widehat{\deg}(\zeta)=1$ one has
\[\widehat{\mu}_{\max}^{\mathrm{asy}}(\overline{D}) =\lim_{n\rightarrow+\infty}\lambda_n^{\zeta}(\overline D).\]
\end{Proposition}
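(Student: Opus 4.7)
The plan is to prove the equality by establishing the two inequalities separately, the first one being essentially definitional and the second requiring an approximation argument.

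For the inequality $\lim_n \lambda_n^{\zeta}(\overline D) \leq \widehat{\mu}_{\max}^{\mathrm{asy}}(\overline D)$, I would argue directly from the definitions. Given any $n \geq 1$ and any $t < \lambda_n^{\zeta}(\overline D)$, a non-zero $s \in V_n^{\zeta, t}(\overline D)$ satisfies $s \in \Rat(X)^{\times}$ and $n\overline D - nt\pi^*(\zeta) + \widehat{(s)} \geq 0$. Dividing by $n$ yields $\overline D - t\pi^*(\zeta) + \widehat{(s^{1/n})} \geq 0$ with $s^{1/n} \in \Rat(X)^{\times}_{\mathbb Q} \subset \Rat(X)^{\times}_{\mathbb R}$, so $\overline D - t\pi^*(\zeta)$ has the Dirichlet property and $t \leq \widehat{\mu}_{\max}^{\mathrm{asy}}(\overline D)$. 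Taking suprema over $t$ and then over $n$ gives the claimed inequality.

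For the reverse inequality, given $t < \widehat{\mu}_{\max}^{\mathrm{asy}}(\overline D)$ and $\epsilon > 0$, the goal is to produce $n \geq 1$ and $s \in \Rat(X)^{\times}$ with $n\overline D - n(t-\epsilon)\pi^*(\zeta) + \widehat{(s)} \geq 0$, which would give $\lambda_n^{\zeta}(\overline D) \geq t - \epsilon$. Starting from a real Dirichlet witness $\psi = \sum_{i=1}^r c_i \psi_i \in \Rat(X)^{\times}_{\mathbb R}$ for $\overline D - t\pi^*(\zeta)$ (with $c_i \in \mathbb R$ and $\psi_i \in \Rat(X)^{\times}$), I would look for integers $m_1, \ldots, m_r$ with $m_i/n$ close to $c_i$ and set $s := \prod_{i=1}^r \psi_i^{m_i} \in \Rat(X)^{\times}$. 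The identity
\[n\overline D - n(t-\epsilon)\pi^*(\zeta) + \widehat{(s)} = n\bigl(\overline D - t\pi^*(\zeta) + \widehat{(\psi)}\bigr) + n\epsilon\pi^*(\zeta) + \sum_{i=1}^r (m_i - n c_i)\,\widehat{(\psi_i)}\]
decomposes the right-hand side as the sum of a large effective term, a positive slack term with trivial divisor part, and a perturbation bounded in $i$.

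The main obstacle is the divisor-part effectivity: since $n\epsilon\pi^*(\zeta)$ contributes nothing to the underlying divisor, along a component $\Gamma$ of $E := D + (\psi)$ where the Dirichlet inequality is tight (i.e.\ $\mathrm{ord}_\Gamma(E) = 0$) one must arrange $\sum_i (m_i - n c_i)\mathrm{ord}_\Gamma(\psi_i) \geq 0$. I would handle this finite system of linear inequalities on the roundings $m_i - \lfloor n c_i\rfloor \in \{0,1\}$ by a pigeonhole/simultaneous-approximation argument over the finitely many tight $\Gamma$, exploiting that $D$ is big and hence $E$ lies in the interior of the effective cone in directions transverse to the tight strata. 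The Green-function effectivity is then automatic near $\mathrm{Supp}(E)$ by matching the leading singular terms to the divisor condition just established, while away from $\mathrm{Supp}(E)$ the perturbation $\sum_i (m_i - n c_i)\log|\psi_i|^2_v$ is uniformly bounded and, for $n$ large, absorbed into the slack $n\epsilon\zeta_v$.

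An alternative route is to invoke \cite[Proposition 3.11]{Chen10b}: one first shows that if $\overline D - t\pi^*(\zeta)$ has the Dirichlet property and $D$ is big, then the additional strict positivity provided by $\epsilon\pi^*(\zeta)$ (together with the bigness of the underlying divisor $E = D+(\psi)$) upgrades pseudo-effectivity to bigness of $\overline D - (t-\epsilon)\pi^*(\zeta)$; then \cite[Proposition 3.11]{Chen10b} gives $\lim_n \lambda_n^{\zeta}\bigl(\overline D - (t-\epsilon)\pi^*(\zeta)\bigr) > 0$, and the translation identity $\lambda_n^{\zeta}(\overline D - c\pi^*(\zeta)) = \lambda_n^{\zeta}(\overline D) - c$ yields $\lim_n \lambda_n^{\zeta}(\overline D) > t - \epsilon$, whence $\lim_n \lambda_n^{\zeta}(\overline D) \geq t$ on letting $\epsilon \to 0$. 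The technically delicate step in this alternative is precisely the pseudo-effective-plus-slack-implies-big argument, which replaces the combinatorial rounding but requires its own positivity input.
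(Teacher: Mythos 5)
Your easy inequality $\lim_n\lambda_n^{\zeta}(\overline D)\leq\widehat{\mu}_{\max}^{\mathrm{asy}}(\overline D)$ is correct and is the same as the paper's. The gap is in your primary route for the reverse inequality. The rounding argument does not go through at the tight components: if $\Gamma$ is a prime divisor with $\ord_\Gamma(D+(\psi))=0$ but $\ord_\Gamma(\psi_i)\neq 0$ for some $i$, there is zero slack in that direction, and the finite system of sign constraints on the roundings $m_i-nc_i$ over all tight $\Gamma$ need not be solvable; bigness of $D$ is a statement about the numerical class and gives no room along a fixed representative $E=D+(\psi)$, which you cannot modify without changing $s$. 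The Green-function side has the same defect in a worse form: your proposed absorption of the error $\sum_i(m_i-nc_i)\log|\psi_i|^2_v$ into ``the slack $n\epsilon\zeta_v$'' is illegitimate, because $\zeta$ is an arbitrary divisor on $\Spec K$ with $\adeg(\zeta)=1$, so $\zeta_v$ vanishes at all but finitely many finite places and may even be negative at exactly the places where the Green function of $\overline D-t\pi^*(\zeta)+\widehat{(\psi)}$ attains $0$; there is nothing to absorb the bounded perturbation there. This passage from an $\RR$-witness to a $\QQ$-witness is precisely the delicate point of the proposition, and it cannot be handled by simultaneous approximation alone.

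Your alternative route is, in substance, the paper's actual proof, but it leaves the key input as an unproved assertion. The paper introduces the $\QQ$-Dirichlet property and its threshold $\widehat{\mu}^{\mathrm{asy}}_{\max,\QQ}$, identifies $\lim_n\lambda_n^{\zeta}(\overline D)$ with it (your easy direction plus clearing denominators), and then bridges the $\RR$- and $\QQ$-thresholds as follows: if $\overline D-t\pi^*(\zeta)$ has the Dirichlet property it is pseudo-effective, and adding $\pi^*(\zeta_\epsilon)$ with $\zeta_\epsilon:=\sum_{\sigma\in K(\CC)}\epsilon[\sigma]$ makes it big by \cite[Proposition~4.4.2(3)]{MoAdel} (this is exactly the ``pseudo-effective plus slack implies big'' step you flag but do not establish); bigness gives a nonzero small section of some multiple, hence the $\QQ$-Dirichlet property, at a cost $\adeg(\zeta_\epsilon)=\epsilon[K:\QQ]/2$ which disappears in the limit. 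To complete your argument you must prove or correctly cite that statement, and the slack should be an honestly positive twist such as $\zeta_\epsilon$ (or one should note that any $\zeta$ of positive degree differs from such a twist by a principal divisor, by Dirichlet's unit theorem), rather than an appeal to positivity of $\epsilon\pi^*(\zeta)$ which the arbitrary $\zeta$ does not possess. A minor point: \cite[Proposition~3.11]{Chen10b} is not needed where you invoke it, since bigness already yields $\hat{H}^0\big(X,n(\overline D-(t-\epsilon)\pi^*(\zeta))\big)\neq\{0\}$ for some $n$, which together with your translation identity $\lambda_n^{\zeta}(\overline D-c\pi^*(\zeta))=\lambda_n^{\zeta}(\overline D)-c$ gives $\lim_n\lambda_n^{\zeta}(\overline D)\geq t-\epsilon$ directly.
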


\begin{proof} We say that an adelic arithmetic $\mathbb R$-Cartier divisor $\overline E$ satisfies to the $\mathbb Q$-Dirichlet property if there is $\varphi\in\Rat(X)_{\mathbb Q}^{\times}$ such that $\overline E+\widehat{(\varphi)}\geq 0$. This condition is stronger than the usual Dirichlet property. We define $\widehat{\mu}_{\max, \QQ}^{\mathrm{asy}}(\overline{D})$ as \[\sup \{ t \in \RR \mid \text{$\overline{D} - t \pi^*(\zeta)$ has the $\mathbb Q$-Dirichlet property}\}.\]

(1) $\displaystyle\lim_{n\rightarrow+\infty}\lambda_n^{\zeta}(\overline D) \leq \widehat{\mu}_{\max, \QQ}^{\mathrm{asy}}(\overline{D})$:
If $n \overline{D} - t n \pi^*(\zeta) + \widehat{(\phi)} \geq 0$ for some $\phi \in \Rat(X)^{\times}$,
then $\overline{D} - t \pi^*(\zeta)$ has the $\QQ$-Dirichlet property, so that
$\lambda^{\zeta}_{n}(\overline{D}) \leq \widehat{\mu}_{\max, \QQ}^{\mathrm{asy}}(\overline{D})$, 
and hence the inequality follows.

\medskip
(2) $\displaystyle\lim_{n\rightarrow+\infty}\lambda_n^{\zeta}(\overline D) \geq \widehat{\mu}_{\max, \QQ}^{\mathrm{asy}}(\overline{D})$:
Let $\epsilon$ be a positive number.
Then there is $t \in \RR$ such that 
\[
\widehat{\mu}_{\max, \QQ}^{\mathrm{asy}}(\overline{D}) - \epsilon \leq t \leq
\widehat{\mu}_{\max, \QQ}^{\mathrm{asy}}(\overline{D})
\]
and $\overline{D} - t \pi^*(\zeta)$ has the $\QQ$-Dirichlet property.
Therefore, $n \overline{D} - t n \pi^*(\zeta) + (\phi) \geq 0$ for some $n \in \ZZ_{>0}$ and
$\phi \in \Rat(X)^{\times}$. Thus,
\[
\widehat{\mu}_{\max, \QQ}^{\mathrm{asy}}(\overline{D}) - \epsilon \leq t \leq
\lambda^{\zeta}_{n}(\overline{D}) \leq \widehat{\mu}_{\max}^{\mathrm{asy}}(\overline{D}),
\]
as required.

\medskip
(3) $\widehat{\mu}_{\max, \QQ}^{\mathrm{asy}}(\overline{D})
\leq \widehat{\mu}_{\max}^{\mathrm{asy}}(\overline{D})$: This is obvious.

\medskip
(4) $\widehat{\mu}_{\max, \QQ}^{\mathrm{asy}}(\overline{D})
\geq \widehat{\mu}_{\max}^{\mathrm{asy}}(\overline{D})$:
Let $\epsilon$ be a positive number. Let $\zeta_{\epsilon}$ be the adelic arithmetic $\RR$-Cartier divisor
on $\Spec K$ given by $\zeta_{\epsilon} := \sum_{\sigma \in K(\CC)} \epsilon [\sigma]$.
We assume that $\overline{D} - t \pi^*(\zeta)$ has the Dirichlet property.
In particular, $\overline{D} - t \pi^*(\zeta)$ is pseudo-effective, so that
$\overline{D} + \pi^*(\zeta_{\epsilon}) - t \pi^*(\zeta)$ 
is big by \cite[Proposition~4.4.2(3)]{MoAdel},
so that $\overline{D} + \pi^*(\zeta_{\epsilon}) - t \pi^*(\zeta)$ has the $\mathbb Q$-Dirichlet property.
Therefore, we have
\[
\widehat{\mu}_{\max}^{\mathrm{asy}}(\overline{D}) \leq \widehat{\mu}_{\max, \QQ}^{\mathrm{asy}}(\overline{D}
+ \pi^*(\zeta_{\epsilon})) = \widehat{\mu}_{\max, \QQ}^{\mathrm{asy}}(\overline{D})
+ \epsilon [K : \QQ]/2,
\]
as desired.
\end{proof}

The following proposition compares the arithmetic maximal slope to a normalized form of the arithmetic volume function.
\begin{Proposition}\label{Pro:comapraisondepsimu}
Let $\overline D$ be an adelic arithmetic $\mathbb R$-Cartier divisor such that $D$ is big and $\widehat{\mu}_{\max}^{\mathrm{asy}}(\overline D)\geq 0$. Then one has
\begin{equation}\label{Equ:inegalitecomparaison}\widehat{\mu}_{\max}^{\mathrm{asy}}(\overline D)\geq\frac{\widehat{\vol}(\overline D)}{(d+1)\vol(D)}.\end{equation}
In particular, if $\widehat{\mu}_{\max}^{\mathrm{asy}}(\overline D)=\widehat{\vol}(\overline D)=0$, then for any 
$\overline E\in\widehat{\Div}_{\mathbb R}(X)_{+}$
one has
\begin{equation}\label{Equ:comparaisondederive}
\nabla_{\overline E}^+\widehat{\mu}_{\max}^{\mathrm{asy}}(\overline D)\geq \frac{1}{(d+1)\vol(D)}\nabla_{\overline E}^+\widehat{\vol}(\overline D),
\end{equation} 
so that,
for any $v\in M_K\cup K(\mathbb C)$, one has
\begin{equation}\label{Equ:inefonctionnel}\Psi_{\overline D,v}^{\widehat{\mu}_{\max}^{\mathrm{asy}}}(f_v)\geq\frac{1}{\vol(D)}\Psi_{\overline D,v}^{\widehat{\vol}}(f_v)\end{equation}
for any non-negative continuous function $f_v$ on $X_v^{\mathrm{an}}$ and hence 
\[\Supp(\Psi_{\overline D,v}^{\widehat{\mu}_{\max}^{\mathrm{asy}}})\supseteq\Supp(\Psi_{\overline D,v}^{\widehat{\vol}}).\]
\end{Proposition}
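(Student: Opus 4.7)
The inequality \eqref{Equ:inegalitecomparaison} is the heart of the matter; \eqref{Equ:comparaisondederive}, \eqref{Equ:inefonctionnel} and the support inclusion will follow from it by a routine perturbation argument. For \eqref{Equ:inegalitecomparaison} I plan to exploit the graded linear series interpretation furnished by Proposition~\ref{Pro:equivalenceform}: one has $\widehat{\mu}_{\max}^{\mathrm{asy}}(\overline D)=\lim_n\lambda_n^\zeta(\overline D)$, which is the asymptotic maximum of the decreasing $\mathbb R$-filtration
\[\mathcal{F}^{t}H^0(X,nD):=V_n^{\zeta,t/n}(\overline D),\qquad t\in\mathbb R,\]
on the finite-dimensional $K$-vector spaces $H^0(X,nD)$. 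On the other hand $\widehat{\vol}(\overline D)$ is, up to the normalising factor $(d+1)!\,n^{-(d+1)}$, the asymptotic behaviour of $\log\#\widehat H^0(X,n\overline D)$, and via the Harder--Narasimhan polygon/arithmetic Okounkov body machinery of Chen~\cite{Chen10b}---already available in the present adelic setting once the sub-multiplicativity~\eqref{Equ:sous-multp} and the upper boundedness of $\lambda_n^\zeta(\overline D)$ noted before Proposition~\ref{Pro:equivalenceform} are in hand---one obtains the integral representation
\[\widehat{\vol}(\overline D)\;=\;(d+1)\int_0^{\widehat{\mu}_{\max}^{\mathrm{asy}}(\overline D)}\vol(V_\sbullet^{\zeta,t}(\overline D))\,dt.\]
Since $V_\sbullet^{\zeta,t}(\overline D)\subset V_\sbullet(D)$, the trivial bound $\vol(V_\sbullet^{\zeta,t}(\overline D))\leq\vol(D)$ holds uniformly in $t$, and substitution yields \eqref{Equ:inegalitecomparaison} at once.

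Now assume $\widehat{\mu}_{\max}^{\mathrm{asy}}(\overline D)=\widehat{\vol}(\overline D)=0$ and fix $\overline E\in\widehat{\mathrm{Div}}_{\mathbb R}(X)_+$. For $\varepsilon>0$ sufficiently small, $D+\varepsilon E$ is big and $\overline D+\varepsilon\overline E\geq\overline D$ is pseudo-effective, so \eqref{Equ:inegalitecomparaison} applied to $\overline D+\varepsilon\overline E$ gives
\[\widehat{\mu}_{\max}^{\mathrm{asy}}(\overline D+\varepsilon\overline E)\;\geq\;\frac{\widehat{\vol}(\overline D+\varepsilon\overline E)}{(d+1)\vol(D+\varepsilon E)}.\]
Dividing by $\varepsilon$, invoking the continuity of $\vol(\cdot)$ on the big cone to replace $\vol(D+\varepsilon E)$ by $\vol(D)$ in the limit, and taking $\limsup_{\varepsilon\to 0+}$ produces \eqref{Equ:comparaisondederive}. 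Specialising $\overline E=\overline O(f_v)$ and invoking the normalisation conventions for the $\Psi^\mu_{\overline D,v}$ adopted in \S\ref{Subsec:volfuc} (in which the factor $(d+1)$ appearing in the raw directional derivative of $\widehat{\vol}$ is absorbed into $\Psi_{\overline D,v}^{\widehat{\vol}}$, in parallel with the purely intersection-theoretic definition of $\Psi_{\overline D,v}^{\widehat{\deg}}$) then yields \eqref{Equ:inefonctionnel}. The support inclusion is immediate from \eqref{Equ:inefonctionnel}: for $x\in\Supp(\Psi_{\overline D,v}^{\widehat{\vol}})$ and any continuous $f_v\geq 0$ with $f_v(x)>0$ one has $\Psi_{\overline D,v}^{\widehat{\vol}}(f_v)>0$, whence $\Psi_{\overline D,v}^{\widehat{\mu}_{\max}^{\mathrm{asy}}}(f_v)>0$ and so $x\in\Supp(\Psi_{\overline D,v}^{\widehat{\mu}_{\max}^{\mathrm{asy}}})$.

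The principal obstacle is the integral representation of $\widehat{\vol}(\overline D)$ in terms of the sub-graded linear series $V_\sbullet^{\zeta,t}(\overline D)$. In \cite{Chen10b} this is established for Hermitian line bundles by the concave-transform formalism and the convergence of Harder--Narasimhan polygons, and I would need to verify that those arguments transpose to the adelic $\mathbb R$-Cartier setting; this ought to be essentially mechanical, since the filtration-theoretic input is abstract and the analytic input has been packaged into the definition of $\widehat H^0(X,n\overline D)$. An alternative self-contained route avoiding this machinery is to prove \eqref{Equ:inegalitecomparaison} directly from an arithmetic Siu-type inequality
\[\widehat{\vol}(\overline D-t\pi^*(\zeta))\;\geq\;\widehat{\vol}(\overline D)-(d+1)\,t\,\vol(D)\,\widehat{\deg}(\zeta),\qquad t\geq 0,\]
which can be extracted from the concavity of $\widehat{\vol}{}^{1/(d+1)}$ on the big cone: positivity of the right-hand side forces $\overline D-t\pi^*(\zeta)$ to be big (hence pseudo-effective), so Proposition~\ref{Pro:big:cripseudoeff} yields $\widehat{\mu}_{\max}^{\mathrm{asy}}(\overline D)\geq t$, and taking the supremum over admissible $t$ recovers \eqref{Equ:inegalitecomparaison}.
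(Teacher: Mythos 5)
Your main argument for \eqref{Equ:inegalitecomparaison} is precisely the paper's: the Boucksom--Chen integral formula $\widehat{\vol}(\overline D)=(d+1)\int_0^{+\infty}\vol(V^{\zeta,t}_{\bullet}(\overline D))\,\mathrm{d}t$, combined with the vanishing of $V_n^{\zeta,t}(\overline D)$ for $t>\widehat{\mu}_{\max}^{\mathrm{asy}}(\overline D)$ coming from Proposition~\ref{Pro:equivalenceform} and the trivial bound $\vol(V^{\zeta,t}_{\bullet}(\overline D))\leq\vol(D)$, while your perturbation derivation of \eqref{Equ:comparaisondederive}, \eqref{Equ:inefonctionnel} and the support inclusion is exactly the routine step the paper leaves implicit (the factor $(d+1)$ issue you resolve by normalisation is the paper's own convention, and constants do not affect supports). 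Only your optional alternative route, extracting the Siu-type bound $\widehat{\vol}(\overline D-t\pi^*(\zeta))\geq\widehat{\vol}(\overline D)-(d+1)t\vol(D)$ from concavity of $\widehat{\vol}{}^{1/(d+1)}$, is not justified as stated (Brunn--Minkowski gives nothing here since $\widehat{\vol}(\pi^*(\zeta))=0$), but it is not needed for the proof.
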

\begin{proof}
Let $\zeta$ be an adelic arithmetic $\mathbb R$-Cartier divisor on $\Spec K$ such that $\widehat{\deg}(\zeta)=1$. By \cite[Corollary 1.13]{Boucksom_Chen}, one has
\[\widehat{\vol}(\overline D)=(d+1)\int_0^{+\infty}\vol(V^{\zeta,t}_\sbullet)\,\mathrm{d}t.\] 
Therefore
\[\frac{\widehat{\vol}(\overline D)}{(d+1)\vol(D)}=\int_0^{+\infty}\frac{\vol(V_\sbullet^{\zeta,t})}{\vol(V_\sbullet(D))}\,\mathrm{d}t.\]
Moreover, by Proposition \ref{Pro:equivalenceform} we obtain that  $V_n^{\zeta,t}=\{0\}$ once $n\geq 1$ and $t>\widehat{\mu}_{\max}^{\pi}(\overline D)$. Therefore \eqref{Equ:inegalitecomparaison} is proved. 
\end{proof}

\subsection{Poicar\'{e}-Lelong formula and integration of Green function}\label{Sec:intersectionnumber}
\setcounter{Theorem}{0}

Let $X$ be a geometrically integral projective scheme of dimension $d$ defined over a number field $K$, and $\overline D_1,\ldots,\overline D_d$ be integrable adelic arithmetic $\mathbb R$-Cartier divisors on $X$, and $\overline D=(D,g)$ be an arbitrary adelic arithmetic $\mathbb R$-Cartier divisor of $C^0$-type on $X$. The purpose of this section is to establish the following result.
\begin{Proposition}\label{Pro:PLformula}
For each place $v\in M_K\cup K(\mathbb C)$, the Green function $g_v$ is integrable with respect to the signed measure $\overline D_1\cdots\overline D_d$. Moreover, if $[D]$ denotes the $\mathbb R$-coefficient algebraic cycle of dimension $d-1$ corresponding to $D$, then the following relation holds (see Convention and terminology \ref{CV:measure})
\begin{equation}\label{Equ:hauteur}
h(\overline D_1,\cdots,\overline D_d,\overline D;X)=h(\overline D_1,\ldots,\overline D_d;[D])+\sum_{v\in M_K\cup K(\mathbb C)}(\overline D_1\cdots\overline D_d)_v(g_v).
\end{equation}
\end{Proposition}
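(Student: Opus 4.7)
The plan is to prove this Poincar\'e--Lelong type formula by first reducing to the relatively nef case via multilinearity, then approximating $g$ by model or smooth Green functions, verifying the formula in that case, and passing to the limit by uniform continuity.

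For the reduction: all three terms in \eqref{Equ:hauteur} are multilinear in $(\overline D_1,\ldots,\overline D_d)$, so writing each integrable $\overline D_i$ as the difference of two relatively nef adelic arithmetic $\mathbb R$-Cartier divisors (the definition recalled in \ref{CV:measure}) reduces the problem to the case where all $\overline D_i$ are relatively nef. In this case each measure $(\overline D_1\cdots\overline D_d)_v$ is a positive Radon measure on the compact space $X_v^{\mathrm{an}}$ whose total mass equals the geometric degree $\deg(D_1\cdots D_d)$, and the continuous function $g_v$ is bounded, hence integrable against it. Integrability in the original signed-measure situation then follows by linearity. Note that the expression $h(\overline D_1,\ldots,\overline D_d;[D])$ is linear in the $\overline D_i$ by the multilinearity of the intersection pairing on each prime component of $[D]$.

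Next, fix relatively nef $\overline D_1,\ldots,\overline D_d$ and approximate $\overline D=(D,g)$ by a sequence $\overline D{}^{(n)}=(D,g^{(n)})$ in which each $g_v^{(n)}$ is a model Green function at every non-archimedean place and is smooth at every archimedean place, with $\|g_v^{(n)}-g_v\|_{\sup}\to 0$ for every $v$. The key uniformity to arrange is the existence of a finite set $T\subset M_K\cup K(\mathbb C)$ such that $g_v^{(n)}=g_v$ for all $v\notin T$ and all $n$; this is achieved by fixing an integral model on which $D$ is defined, so that $g_v$ is already a model Green function away from a finite set of non-archimedean places, and then perturbing only at the places in $T$. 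At archimedean places this is standard smoothing of continuous functions; at non-archimedean places one invokes density of model Green functions in $C^0$ Green functions on the Berkovich space.

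For the approximants $\overline D{}^{(n)}$, identity \eqref{Equ:hauteur} is the classical arithmetic Poincar\'e--Lelong formula: at archimedean places it follows from the identity $dd^c g^{(n)}+\delta_D=c_1(\overline D{}^{(n)})$ combined with integration against the smooth form $c_1(\overline D_1)\wedge\cdots\wedge c_1(\overline D_d)$, and at non-archimedean places it is the defining compatibility of the Chambert-Loir measure with intersection numbers on a model (cf.\ \cite{CL06}). To pass to the limit one uses three continuity statements: the left-hand intersection number is continuous in $\overline D{}^{(n)}$ with respect to uniform convergence of Green functions (\ref{CV:measure}); the middle term $h(\overline D_1,\ldots,\overline D_d;[D])$ is independent of $g$; and in the sum on the right, uniform convergence $g_v^{(n)}\to g_v$ together with finite total mass of the positive measure $(\overline D_1\cdots\overline D_d)_v$ yields convergence of the integrals at the finitely many places in $T$, while off $T$ the summands are unchanged. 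The main technical obstacle lies in the approximation step, namely producing model approximations at non-archimedean places that are adelic-coherent, in the sense of altering $g$ only at a finite set of places while still achieving uniform approximation at those places; once this density input is granted, the passage to the limit is a routine continuity argument.
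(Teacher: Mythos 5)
Your multilinearity reduction to relatively nef $\overline D_1,\ldots,\overline D_d$ agrees with the paper's first step, but your proposal has two genuine gaps. The first is the integrability claim: $g_v$ is a $D$-Green function of $C^0$-type, so it is continuous only on $X_v^{\mathrm{an}}\setminus \Supp(D)_v^{\mathrm{an}}$ and has logarithmic singularities along $\Supp(D)$; it is \emph{not} a bounded continuous function unless $D=0$. Hence ``continuous, hence bounded, hence integrable against a finite measure'' fails, and the assertion that the mixed Monge--Amp\`ere/Chambert-Loir measure $(\overline D_1\cdots\overline D_d)_v$ integrates these singular functions is precisely the nontrivial content of the proposition (it is the extension of results of \cite{Maillot00} and \cite{CL_Thuillier} announced in Conventions and terminology \ref{CV:measure}), not a triviality you can assume at the outset.

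The second gap is in the verification of \eqref{Equ:hauteur} for your approximants $\overline D{}^{(n)}$: you invoke ``the classical arithmetic Poincar\'e--Lelong formula'' at each place, but the classical statements (\cite{Maillot00}, \cite{CL06}, \cite{CL_Thuillier}) concern Cartier divisors or line bundles equipped with semipositive model/adelic metrics, whereas your $\overline D_1,\ldots,\overline D_d$ are arbitrary relatively nef adelic arithmetic $\mathbb R$-Cartier divisors; the citation simply does not cover the case you need, so your limit argument rests on an unproved base case. The paper bridges exactly this point: it first observes that the statement is independent of the choice of the adelic $D$-Green family (two such families differ by continuous functions, and the case $D=0$ is the very definition of the measures), which lets one take $\overline D$ to be a relatively ample arithmetic Cartier divisor coming from an integral model --- note that this also makes your adelically coherent approximation of $g$ unnecessary --- and then runs an induction on the number of factors $\overline D_i$ that are positive linear combinations of ample Cartier divisors with $(C^0\cap\mathrm{PSH})$-type Green functions, the base case being \cite[Theorem 4.1]{CL_Thuillier} and the inductive step writing a relatively nef $\overline D_k$ as a difference $\overline D{}'_k-\overline E_k$ of two such combinations and using multilinearity. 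Without an argument of this kind (or, if you also approximate the $\overline D_i$, a genuinely delicate weak-convergence plus uniform-integrability argument for the singular $g_v$), your proof does not go through.
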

Note that in the particular case where $D_1,\ldots,D_d,D$ are Cartier divisors, this result has been obtained in \cite[\S5]{Maillot00} and \cite{CL_Thuillier} respectively for hermitian and adelic cases.

Before proving the above proposition, we present several observations as follows. Let $\overline D_1,\ldots,\overline D_d$ 
be integrable adelic arithmetic $\RR$-Cartier divisors on $X$, and $\mathfrak p$ be a place in $M_K$.
In \cite{MoAdel}, the number $\adeg_{\mathfrak p}(\overline{D}_1 \cdots \overline{D}_d ; \phi)$ was defined for an integrable continuous function
$\phi$ on $X^{\an}_{\mathfrak p}$. Moreover,
the set of integrable continuous functions is dense in $C^0(X^{\an}_{\mathfrak p})$
with respect to the supremum norm.
Therefore, one has a natural extension of the functional
$\log\#(O_K/\mathfrak p)\adeg_{\mathfrak p}(\overline{D}_1 \cdots \overline{D}_d; \phi)$ for any continuous function $\phi$, which defines a signed Borel measure on $X_{\mathfrak p}^{\mathrm{an}}$ which we denote by $(\overline D_1\cdots\overline D_d)_{\mathfrak p}$. Similarly, for any $\sigma\in K(\mathbb C)$, the product of currents $\frac 12c_1(D_1,g_{1,\sigma})\wedge\cdots\wedge c_1(D_d,g_{d,\sigma})$ defines a signed Borel measure on $X_\sigma^{\mathrm{an}}$ which we denote by $(\overline D_1\cdots\overline D_d)_{\sigma}$ . See Convention and terminology \ref{CV:measure} for a presentation of the construction of these measures in the language of arithmetic intersection product. In particular, the proposition is true in the special case where $D=0$.  

\begin{proof}[Proof of Proposition \ref{Pro:PLformula}]
Note that both side the equality \eqref{Equ:hauteur} are multilinear with respect to the vector $(\overline D_1,\cdots,\overline D_d,\overline D)$. Therefore we may assume that $\overline D_1,\ldots,\overline D_d$ are relatively nef and $D$ is ample without loss of generality. Moreover, for any place $v\in M_K\cup K(\mathbb C)$ two $D$-Green functions on $X_v^{\mathrm{an}}$ differ by a continuous function on $X_v^{\mathrm{an}}$. Therefore it suffice to prove the proposition for an arbitrary choice of adelic $D$-Green functions, and the general case follows by the linearity of the problem and the particular case where $D=0$. In particular, we can assume without loss of generality that $\overline D$ is a \emph{relatively ample arithmetic Cartier divisor}, namely $D$ comes from an ample line bundle $L$ on $X$ and the adelic structure on $D$ comes from an ample integral model of $(X,L)$ equipped with semi-positive metrics at infinite places.

We shall prove the following claim by induction on $k$. Note that the case where $k=d+1$ is just the result of the proposition itself.
\begin{Claim}Assume that $\overline D_i$ is relatively nef for any $i\in\{1,\ldots,d\}$ .
Let $k$ be an index in $\{1,\ldots,d+1\}$. Then the assertion of the proposition holds provided that each $\overline D_i$ ($k\leq i\leq d$) can be written as a positive linear combination of ample Cartier divisors equipped with Green functions of $C^0\cap\mathrm{PSH}$-type.
\end{Claim}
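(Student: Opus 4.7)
The plan is to carry out the announced induction on $k$ from the base case $k=1$ up to $k=d+1$ (which is the proposition itself). In the base case, each $\overline D_1,\ldots,\overline D_d$ is, by hypothesis, a positive $\mathbb R$-linear combination of ample Cartier divisors equipped with $(C^0\cap\mathrm{PSH})$-Green functions, and by the reduction already made $\overline D$ is itself relatively ample. Multilinearity of both sides of \eqref{Equ:hauteur} in the $d+1$ arguments $(\overline D_1,\ldots,\overline D_d,\overline D)$ therefore reduces the problem to the situation where each of these divisors is a single ample integral Cartier divisor endowed with a semipositive continuous metric. In that classical setting, the arithmetic Poincar\'e--Lelong formula is known: at the archimedean places it follows from Maillot's work \cite[\S5]{Maillot00} via the Bedford--Taylor theory of Monge--Amp\`ere operators for locally bounded plurisubharmonic functions, and at the non-archimedean places it is a result of Chambert-Loir and Thuillier \cite{CL_Thuillier}; both treatments also yield the required local integrability of $g_v$ against the resulting measure.

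For the inductive step, suppose the claim for some $k\in\{1,\ldots,d\}$; we deduce it for $k+1$, which amounts to weakening the hypothesis on $\overline D_k$ from the special form to mere relative nefness. Fix an ample Cartier divisor $A$ on $X$ together with a smooth semipositive Green function $h_A$, and for $\varepsilon>0$ set $\overline D_k^{\varepsilon}:=\overline D_k+\varepsilon(A,h_A)$. Then $D_k+\varepsilon A$ is ample $\mathbb R$-Cartier, and after a further uniform approximation of its Green function by positive linear combinations of smooth PSH Green functions of ample Cartier divisors, $\overline D_k^{\varepsilon}$ can be placed into the special class required at level $k$. The inductive hypothesis applied to $(\overline D_1,\ldots,\overline D_{k-1},\overline D_k^{\varepsilon},\overline D_{k+1},\ldots,\overline D_d,\overline D)$ then yields \eqref{Equ:hauteur} for every $\varepsilon>0$.

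It remains to pass to the limit $\varepsilon\to 0^{+}$. The terms $h(\overline D_1,\ldots,\overline D_k^{\varepsilon},\ldots,\overline D_d,\overline D;X)$ and $h(\overline D_1,\ldots,\overline D_k^{\varepsilon},\ldots,\overline D_d;[D])$ behave well by the continuity of the arithmetic intersection number in each of its arguments recalled in Convention and terminology \ref{CV:measure}. The main obstacle is the measure-integral term: one must establish the integrability of $g_v$ against $(\overline D_1\cdots\overline D_d)_v$ together with the convergence
\begin{equation*}
(\overline D_1\cdots\overline D_k^{\varepsilon}\cdots\overline D_d)_v(g_v)\longrightarrow (\overline D_1\cdots\overline D_k\cdots\overline D_d)_v(g_v)
\end{equation*}
at each place $v$. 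I would decompose $g_v$ locally as a difference of two non-negative $\mathrm{PSH}$-type singular Green functions to reduce to the non-negative case, then combine the weak convergence of the non-negative measures $(\overline D_1\cdots\overline D_k^{\varepsilon}\cdots\overline D_d)_v\to(\overline D_1\cdots\overline D_k\cdots\overline D_d)_v$ (which follows from the continuity of the Monge--Amp\`ere and Chambert-Loir products in their PSH arguments) with a Chern--Levine--Nirenberg-type tail estimate, uniform in $\varepsilon$. Establishing this last estimate in the mixed adelic setting with a potentially singular $g_v$ is the most delicate part of the proof.
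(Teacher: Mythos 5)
Your base case agrees with the paper's ($k=1$ via Chambert-Loir--Thuillier and Maillot, by multilinearity), but your inductive step takes a perturbation-and-limit route, and it is exactly there that the argument has a genuine gap. Two problems. First, the claim at level $k$ requires the $k$-th entry to \emph{be} a positive linear combination of ample Cartier divisors equipped with $(C^0\cap\mathrm{PSH})$-Green functions; a ``uniform approximation of its Green function'' by such combinations does not place $\overline D_k^{\varepsilon}$ in that class, it only produces yet another approximating sequence for which you would again have to pass to the limit, so the induction hypothesis does not directly apply to $\overline D_k^{\varepsilon}$ as you claim. Second, and more seriously, the passage $\varepsilon\to 0^{+}$ in the term $(\overline D_1\cdots\overline D_k^{\varepsilon}\cdots\overline D_d)_v(g_v)$ is not established: $g_v$ has logarithmic singularities along the support of $D$, so weak convergence of the measures is not enough, and the uniform Chern--Levine--Nirenberg-type estimate you invoke in the mixed archimedean/Berkovich setting is precisely the hard analytic content that would need to be proved; you explicitly defer it. As written, the proof is incomplete at its crucial step.

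The paper's induction avoids all limits. Since $\overline D_k$ is relatively nef, one chooses $\overline E_k$, a positive linear combination of ample Cartier divisors with $(C^0\cap\mathrm{PSH})$-Green functions, such that $D_k'=E_k+D_k$ is an ample Cartier divisor; then $\overline D_k'=\overline E_k+\overline D_k$ again lies in the special class (its Green function is a sum of $(C^0\cap\mathrm{PSH})$-type Green functions). Applying the level-$k$ hypothesis twice, once with $\overline E_k$ and once with $\overline D_k'$ in the $k$-th slot, and subtracting, the multilinearity of both sides of \eqref{Equ:hauteur} (and of the measures, which also gives the integrability of $g_v$ against the signed measure as a difference of two measures for which integrability is known) yields the claim at level $k+1$ with no approximation argument at all. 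If you wish to keep your scheme you must actually supply the uniform tail estimate for the singular $g_v$; otherwise the exact decomposition plus multilinearity is the complete and much shorter route.
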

The claim in the case where $k=1$ is classical, which results from \cite[Theorem 4.1]{CL_Thuillier} by multilinearity. In the following, we verify that the claim for $k$ implies the same claim for $k+1$. We choose an $\mathbb R$-Cartier divisor $E_k$ such that $E_k$ can be written as a positive linear combination of ample Cartier divisors and $D_k'=E_k+D_k$ is an ample Cartier divisor. We choose suitable $E_k$-Green functions such that $\overline E_k$ can be written as a positive linear combination of ample Cartier divisors equipped with Green functions of $C^0\cap\mathrm{PSH}$-type. Then $\overline D_k'=\overline E_k+\overline D_k$ is an ample Cartier divisor equipped with Green functions of $C^0\cap\mathrm{PSH}$-type. The induction hypothesis then implies that the claim holds for $\overline D_1,\ldots,\overline D_{k-1},\overline E_k,\overline D_{k+1},\ldots,\overline D_d$ and for $\overline D_1,\ldots,\overline D_{k-1},\overline D_k',\overline D_{k+1},\ldots,\overline D_d$. We then conclude by the multilinearity of the problem.
\end{proof}

\subsection{Intersection measure and comparison with $\Psi_{\overline D,v}^{\widehat{\mu}_{\max}^{\mathrm{asy}}}$}
\setcounter{Theorem}{0}

Similarly to the results obtained in the previous subsection, in the case where $\overline D$ is nef and $D$ is big, the linear functional $((d+1)\vol(D))^{-1}\Psi_{\overline D,v}^{\widehat{\deg}}$ is bounded from above by $\Psi_{\overline D,v}^{\widehat{\mu}_{\max}^{\mathrm{asy}}}$ provided that $\widehat{\deg}(\overline D{}^{d+1})=\widehat{\mu}_{\max}^{\mathrm{asy}}(\overline D)=0$ (we can actually prove that they are equal). This comparison uses a generalization of the positive intersection product to the framework of adelic arithmetic $\mathbb R$-Cartier divisors.  



Let $\overline D$ be a big adelic arithmetic $\mathbb R$-Cartier divisor on $X$. We denote by $\Theta(\overline D)$ the set of couples $(\nu,\overline N)$, where $\nu:X'\rightarrow X$ is a birational projective morphism and $\overline N$ is a nef adelic arithmetic $\mathbb R$-Cartier divisor on $X'$ such that \[\hat{H}^0(X',t(\nu^*(\overline D)-\overline N))\neq\{0\}\] for some $t>0$. We then define a functional $\langle\overline D{}^d\rangle$ on the cone $\widehat{\mathrm{Nef}}_{\mathbb R}(X)$ of nef adelic arithmetic $\mathbb R$-Cartier divisors on $X$ as
\[\forall\,\overline A\in\widehat{\mathrm{Nef}}_{\mathbb R}(X),\quad \langle\overline D{}^d\rangle\cdot\overline A:=\sup_{(\nu,\overline N)\in\Theta}\widehat{\deg}(\overline N{}^{d}\cdot\nu^*(\overline A)).
 \]
If the set $\Theta(\overline D)$ is empty, then the value of $\langle\overline D{}^d\rangle\cdot\overline A$ is defined to be zero by convention. The set $\Theta(\overline D)$ is preordered in the following way~: \[(\nu_1:X_1\rightarrow X,\overline N_1)\geq(\nu_2:X_2\rightarrow X,\overline N_2)\] if and only if there exists a birational modification $\nu':X'\rightarrow X$ over both $X_1$ and $X_2$ such that $\hat{H}^0(X',t(p_1^*\overline N_1-p_2^*\overline N_2))\neq\{0\}$ for some $t>0$, where $p_i:X'\rightarrow X_i$ ($i=1,2$) are structural morphisms (which are birational projective morphisms such that $\nu_1p_1=\nu_2p_2=\nu'$). By the same method of \cite[Proposition 3.3]{Chen11}, one can prove that $\Theta(\overline D)$ is filtered with respect to this preorder and hence $\langle\overline D{}^d\rangle$ is an additive and positively homogeneous functional on $\widehat{\mathrm{Nef}}_{\mathbb R}(X)$ and hence extends to a linear form on the vector space $\widehat{\mathrm{Int}}_{\mathbb R}(X)$ of integrable adelic arithmetic $\mathbb R$-Cartier divisors. Finally, by \cite[Theorem 3.3.7]{MoAdel}, one can extends by continuity the functional $\langle\overline D{}^d\rangle$ to the whole space $\widehat{\mathrm{Div}}_{\mathbb R}(X)$ of adelic arithmetic $\mathbb R$-Cartier divisors such that $\langle\overline D{}^d\rangle\cdot\overline E\geq 0$ if $\overline E\geq 0$.

The comparison between $\vol(D)^{-1}\Psi_{\overline D,v}^{\widehat{\deg}}$ and $\Psi_{\overline D,v}^{\widehat{\mu}_{\max}^{\mathrm{asy}}}$ comes from the following variant of \cite[Theorem 2.2]{Yuan07} in the adelic arithmetic $\mathbb R$-Cartier divisor setting. 

\begin{Theorem}
Let $\overline{D} = (D,g)$ and $\overline{L} = (L, h)$ be
nef adelic arithmetic $\RR$-Cartier divisors. Then
\begin{equation}\label{Equ:sius ineq}
\avol(\overline{D} - \overline{L}) \geq
\adeg(\overline{D}{}^{d+1}) - (d+1) \adeg(\overline{D}{}^d \cdot \overline{L}).
\end{equation}
\end{Theorem}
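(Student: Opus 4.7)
The plan is to port Yuan's proof of the arithmetic Siu inequality \cite{Yuan07} to the adelic arithmetic $\RR$-Cartier divisor setting. As a first reduction, both sides of \eqref{Equ:sius ineq} are continuous on the nef cone (the arithmetic volume by standard continuity on the big cone, the intersection pairing by the multilinear extension recalled in Conventions and terminology \ref{CV:measure}). Using $\mathbb Q$-density together with uniform approximation of adelic Green functions by model Green functions (in the spirit of Proposition~\ref{prop:approx:Green} and \cite{MoAdel}), I may assume that $\overline D$ and $\overline L$ are nef arithmetic Cartier divisors on a projective model $\mathcal X \to \Spec O_K$ with smooth semi-positive metrics at the archimedean places. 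Once in this classical setting, the result is formally the same as \cite[Theorem~2.2]{Yuan07} with $\Spec O_K$ replacing $\Spec\ZZ$, and the argument goes through verbatim.

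For the core inequality I would follow the standard lattice-counting argument. The arithmetic Hilbert--Samuel formula applied to $\overline L$ yields $\ah(n\overline L)\sim \tfrac{n^{d+1}}{(d+1)!}\adeg(\overline L{}^{d+1})$, so Minkowski's theorem produces, for each large $n$, a nonzero $s_n\in H^0(X,nL)$ with $\|s_n\|_{v,\sup}\leq 1$ at every place $v\in M_K\cup K(\CC)$. Multiplication by $s_n$ gives an injection
\[
\phi_n\colon \aH(n(\overline D-\overline L))\hookrightarrow \aH(n\overline D),\qquad t\mapsto s_n t,
\]
whose cokernel is controlled, via a standard lattice-index estimate (Yuan's Lemma~2.4), by the rank of the geometric restriction map to $H^0(nD|_{\mathrm{div}(s_n)})$ plus an $O(n^d\log n)$ error. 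Applying arithmetic Hilbert--Samuel to $\ah(n\overline D)$ and geometric Hilbert--Samuel to $h^0(nD|_{\mathrm{div}(s_n)})$, together with the fact that the generic fibre of $\mathrm{div}(s_n)$ has class $nL$, yields
\[
\ah(n\overline D)-\ah(n(\overline D-\overline L))\leq \frac{n^{d+1}}{d!}(D^d\cdot L)+o(n^{d+1}).
\]
Dividing by $n^{d+1}/(d+1)!$ and passing to the limit gives \eqref{Equ:sius ineq}.

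The main difficulty I foresee is the adelic reduction in the first paragraph. At non-archimedean places the Green functions $g_{\mathfrak p}$ need not come from a single integral model, so the sup-norms $\|\cdot\|_{v,\sup}$ entering the definition of $\aH(n\overline D)$ must be compared to those arising from approximating models. The uniform estimate $\Vert g_{v}-g_{(\mathcal X_k,\mathcal D_k)}\Vert_{\sup}\to 0$ of Proposition~\ref{prop:approx:Green} translates into a multiplicative error $e^{o(n)}$ on $\#\aH(n\overline D)$, which is absorbed in the $o(n^{d+1})$ term of arithmetic Hilbert--Samuel. The Poincar\'e--Lelong formula (Proposition~\ref{Pro:PLformula}) then guarantees that the intersection numbers $\adeg(\overline D{}^{d+1})$ and $\adeg(\overline D{}^d\cdot \overline L)$ computed from the model approximations converge to the adelic intersection numbers appearing in \eqref{Equ:sius ineq}, which closes the argument.
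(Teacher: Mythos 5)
Your overall route coincides with the paper's: approximate $\overline{D}$ and $\overline{L}$ by nef arithmetic divisors on models whose Green functions converge uniformly, use the continuity of $\avol$ and of the arithmetic intersection numbers (\cite[Theorem~5.2.1]{MoAdel}) to reduce to the model case, and then invoke Siu's inequality there. In the paper the remaining step is purely a reduction from $\RR$- to $\QQ$-coefficients: one adds $\epsilon\overline{\AAA}$ for an ample arithmetic $\RR$-Cartier divisor $\overline{\AAA}$, writes the resulting divisors as positive combinations of ample arithmetic Cartier divisors, rounds the real coefficients up to rationals (which preserves nefness, since one only adds non-negative multiples of ample divisors), and then quotes \cite[Theorem~2.2]{Yuan07} for nef arithmetic $\QQ$-divisors together with continuity of the volume. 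Your ``$\QQ$-density'' is exactly the place where this decomposition-and-rounding argument is needed, because the approximation must stay inside the nef cone; had you stopped after citing Yuan, your proposal would essentially be the paper's proof.

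The genuine gaps are in your sketch of the core inequality. First, the Minkowski step fails for merely nef $\overline{L}$: arithmetic Hilbert--Samuel only gives $\ah(n\overline{L})\sim\frac{n^{d+1}}{(d+1)!}\adeg(\overline{L}{}^{d+1})$, which produces no section when $\adeg(\overline{L}{}^{d+1})=0$, and there are nef adelic arithmetic $\RR$-Cartier divisors with $H^0(X,nL)=0$ for every $n\geq 1$ (for instance an irrational nef boundary class on a simple abelian surface, equipped with canonical Green functions), so no $s_n$ exists at all; one must first perturb $\overline{D},\overline{L}$ by $\epsilon\overline{\AAA}$ to reach the arithmetically ample case, which is precisely the reduction Yuan and the paper perform before any section counting. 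Second, your cokernel estimate is not the right one: what is needed is
\[
\ah(n\overline{D})-\ah\bigl(n(\overline{D}-\overline{L})\bigr)\leq \frac{n^{d+1}}{d!}\,\adeg(\overline{D}{}^{d}\cdot\overline{L})+o(n^{d+1}),
\]
an arithmetic intersection number which records the height of $\operatorname{div}(s_n)$ with respect to $\overline{D}$ and the archimedean norm distortion. Your bound by the geometric quantity $(D^d\cdot L)$ (on the $d$-dimensional generic fibre this is an intersection of $d+1$ divisor classes, so it is not even the relevant invariant) together with ``the rank of the restriction map plus $O(n^d\log n)$'' sees none of the Green functions of $\overline{L}$, and therefore cannot imply \eqref{Equ:sius ineq}, whose right-hand side depends on them; the cokernel count is itself of order $n^{d+1}$ and its leading constant is the arithmetic, not geometric, intersection number. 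Both defects disappear if, as in the paper, you simply quote \cite[Theorem~2.2]{Yuan07} in the model $\QQ$-case and concentrate on the two reductions.
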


\begin{proof}
By \cite[Definition~2.1.6 and Proposition~2.1.7]{MoAdel},
there are sequences 
\[
\{ (\XXX_n, \DDD_n) \}_{n=1}^{\infty}
\quad\text{and}\quad
\{ (\XXX_n, \LLL_n) \}_{n=1}^{\infty}
\]
of models $(X,D)$ and $(X,L)$,
respectively, with the following properties:
\begin{enumerate}
\item
$\overline{\DDD}_n = \left(\DDD_n, \sum_{\sigma \in K(\CC)} g_{\sigma}\right)$ and
$\overline{\LLL}_n = \left(\LLL_n, \sum_{\sigma \in K(\CC)} h_{\sigma}\right)$ are
nef for each $n$.

\item
If we set 
\[
\left(0, \sum_{\mathfrak p \in M_K} g'_{n,\mathfrak p}[\mathfrak p] \right)= \overline{D}-
\overline{\DDD}^{\ad}_n
\quad\text{and}\quad
\left( 0, \sum_{\mathfrak p \in M_K} h'_{n,\mathfrak p}[\mathfrak p] \right)
= \overline{L} - \overline{\LLL}^{\ad}_n,
\]
then
$\lim_{n\to\infty} \Vert g'_{n,\mathfrak p} \Vert_{\sup} = 0$
and
$\lim_{n\to\infty} \Vert h'_{n,\mathfrak p} \Vert_{\sup} = 0$.
\end{enumerate}
Therefore, by \cite[Theorem~5.2.1]{MoAdel},
it is sufficient to see 
the case where 
$\overline{D} = \overline{\DDD}$ and $\overline{L} = \overline{\LLL}$
for nef arithmetic $\RR$-Cartier divisors $\overline{\DDD}$ and
$\overline{\LLL}$ on some arithmetic variety $\XXX$.

Let $\overline{\AAA}$ be an ample arithmetic $\RR$-Cartier 
divisor on $\XXX$.
If Siu's inequality holds for $\overline{\DDD} + \epsilon \overline{\AAA}$
and $\overline{\LLL} + \epsilon \overline{\AAA}$ ($\epsilon  > 0$), then, by using the continuity of
the volume function, we have the assertion for our case, 
so that
we may assume that $\DDD$ and $\LLL$ are ample.
Thus we can set
\[
\DDD = a_1 \DDD'_1 + \cdots + a_l \DDD'_l
\quad\text{and}\quad
\LLL = b_1 \LLL'_1 + \cdots + b_r \LLL'_r,
\]
where $\LLL'_1,\ldots,\LLL'_l,\DDD'_1,\ldots,\DDD'_r$ are ample Cartier divisors on $\XXX$ and
\[
a_1, \ldots, a_l, b_1, \ldots, b_r \in \RR_{>0}.
\]
Let $g'_i$ (resp. $h'_j$) be a $\DDD'_i$-Green function 
(resp. $\LLL'_j$-Green function)
such that $\overline{\DDD}'_i = (\DDD'_i, g'_i)$ (resp. $\overline{\LLL}'_j = (\LLL'_j, h'_j)$)
is ample.
We set
\[
\overline{\DDD} = a_1 \overline{\DDD}'_1 + \cdots + a_l \overline{\DDD'}_l + (0, \phi)
\quad\text{and}\quad 
\overline{\LLL} = b_1 \overline{\LLL}'_1 + \cdots + b_r \overline{\LLL}'_r + (0, \psi).
\]
Moreover, for $a'_1, \ldots, a'_l, b'_1, \ldots, b'_r \in \RR$,
we set
\[
\overline{\DDD}_{a'_1,\ldots,a'_l} = a'_1 \overline{\DDD}'_1 + \cdots + a'_l \overline{\DDD}'_l + (0, \phi)
\ \text{and}\ 
\overline{\LLL}_{b'_1,\ldots,b'_r} = b'_1 \overline{\LLL}'_1 + \cdots + b'_r \overline{\LLL}'_r + (0, \psi).
\]
Note that if $a'_1 \geq a_1, \ldots, a'_i \geq a_l$ and
$b'_1 \geq b_1,\ldots,b'_r \geq b_r$, then
$\overline{\DDD}_{a'_1,\ldots,a'_l}$ and
$\overline{\LLL}_{b'_1, \ldots,b'_r}$ are nef, so that,
using the continuity of the volume function together with
Siu's inequality (cf. \cite[Theorem 2.2]{Yuan07}) for nef arithmetic $\QQ$-divisors,
we have the assertion.
\end{proof}


\begin{Corollary}
Let $\overline D$ be a relatively nef adelic arithmetic $\mathbb R$-Cartier divisor on $X$ such that $D$ is big. If \begin{equation}\label{Equ:egalitedehaut}\widehat{\mu}_{\max}^{\mathrm{asy}}(\overline D)=\frac{\widehat{\deg}(\overline D{}^{d+1})}{(d+1)\vol(D)},\end{equation} then one has
\begin{equation}\label{Equ:derivedirec}
\forall\,\overline E\in\widehat{\Div}_{\mathbb R}(X),\quad\nabla_{\overline E}^+\widehat{\mu}_{\max}^{\mathrm{asy}}(\overline D)=\frac{\widehat{\deg}(\overline D{}^d\cdot\overline E)}{\vol(D)}.
\end{equation}
In particular,
\begin{equation}
\label{Equ:egalitedemesure}
\Psi_{\overline D,v}^{\widehat{\mu}_{\max}^{\mathrm{asy}}}=\frac{(\overline D{}^d)_v}{\vol(D)}\end{equation}
for any $v\in M_K\cup K(\mathbb C)$.
\end{Corollary}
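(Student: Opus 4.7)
The plan is to prove the functional identity \eqref{Equ:egalitedemesure} by a squeeze argument; the directional derivative formula \eqref{Equ:derivedirec} then follows for $\overline E$ of the form $\overline O(f_v)$ directly, and the general case is handled by a translation reducing to $\widehat{\mu}_{\max}^{\mathrm{asy}}(\overline D)=0$. The three key ingredients are Siu's inequality (just proved) for a pointwise lower bound, the translation law $\widehat{\mu}_{\max}^{\mathrm{asy}}(\overline D+\pi^*\eta)=\widehat{\mu}_{\max}^{\mathrm{asy}}(\overline D)+\widehat{\deg}(\eta)$ for the total-mass matching, and the concavity of $\widehat{\mu}_{\max}^{\mathrm{asy}}$ (a consequence of its $1$-homogeneity and super-additivity) for the squeeze.

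For the lower bound, I apply Siu's inequality to the pair $(\overline D+\epsilon\overline O(f_v),\,t\pi^*\zeta)$ with $\widehat{\deg}(\zeta)=1$ and $\zeta$ supported at a single archimedean place. Since $\overline O(f_v)$ carries no Cartier part, the geometric volume is unchanged, and the projection formula $\widehat{\deg}\bigl((\overline D+\epsilon\overline O(f_v))^d\cdot\pi^*\zeta\bigr)=\vol(D)\widehat{\deg}(\zeta)=\vol(D)$ yields
\[\widehat{\vol}\bigl(\overline D+\epsilon\overline O(f_v)-t\pi^*\zeta\bigr)\;\geq\;\widehat{\deg}\bigl((\overline D+\epsilon\overline O(f_v))^{d+1}\bigr)-(d+1)t\vol(D).\]
For $t$ below the critical ratio the right side is positive, so $\overline D+\epsilon\overline O(f_v)-t\pi^*\zeta$ is big and hence has the Dirichlet property; thus
\[\widehat{\mu}_{\max}^{\mathrm{asy}}\bigl(\overline D+\epsilon\overline O(f_v)\bigr)\;\geq\;\frac{\widehat{\deg}\bigl((\overline D+\epsilon\overline O(f_v))^{d+1}\bigr)}{(d+1)\vol(D)},\]
with equality at $\epsilon=0$ by the hypothesis \eqref{Equ:egalitedehaut}. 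Differentiating at $\epsilon=0^+$ produces the pointwise lower bound $\Psi^{\widehat{\mu}_{\max}^{\mathrm{asy}}}_{\overline D,v}(f_v)\geq(\overline D^d)_v(f_v)/\vol(D)$.

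For the total-mass equality, I evaluate at a constant $f_v\equiv c$: the adelic divisor $\overline O(c)$ is a scalar multiple of a pullback $\pi^*\eta$ from $\Spec K$, so the translation law gives $\Psi^{\widehat{\mu}_{\max}^{\mathrm{asy}}}_{\overline D,v}(c)$ as an explicit multiple of $\widehat{\deg}(\eta)$, which matches $(\overline D^d)_v(c)/\vol(D)$ by the projection formula $\widehat{\deg}(\overline D^d\cdot\pi^*\eta)=\vol(D)\widehat{\deg}(\eta)$. The squeeze then comes from super-additivity of $\Psi^{\widehat{\mu}_{\max}^{\mathrm{asy}}}_{\overline D,v}$: writing $\Phi:=(\overline D^d)_v/\vol(D)$ and decomposing $M=f_v+(M-f_v)$ for a constant $M\geq\|f_v\|_{\sup}$,
\[\Phi(M)=\Psi^{\widehat{\mu}_{\max}^{\mathrm{asy}}}(M)\;\geq\;\Psi^{\widehat{\mu}_{\max}^{\mathrm{asy}}}(f_v)+\Psi^{\widehat{\mu}_{\max}^{\mathrm{asy}}}(M-f_v)\;\geq\;\Phi(f_v)+\Phi(M-f_v)=\Phi(M),\]
which forces $\Psi^{\widehat{\mu}_{\max}^{\mathrm{asy}}}(f_v)=\Phi(f_v)$; $1$-homogeneity then extends the identity to all $f_v\in C^0(X_v^{\mathrm{an}})_+$, establishing \eqref{Equ:egalitedemesure}.

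The main obstacle is the general version of \eqref{Equ:derivedirec} when $\overline E$ has non-trivial Cartier part $E$: differentiating the Siu lower bound then picks up an unwanted correction $-d\widehat{\mu}_{\max}^{\mathrm{asy}}(\overline D)(D^{d-1}\cdot E)/\vol(D)$ arising from $(D+\epsilon E)^d$, which must be absorbed. The strategy is to translate first to $\overline D_0=\overline D-\widehat{\mu}_{\max}^{\mathrm{asy}}(\overline D)\pi^*\zeta_0$ with $\widehat{\deg}(\zeta_0)=1$: by \eqref{Equ:egalitedehaut} and the projection formula one has $\widehat{\mu}_{\max}^{\mathrm{asy}}(\overline D_0)=\widehat{\deg}(\overline D_0^{d+1})=0$, the correction vanishes, and a compatibility check on how both sides of \eqref{Equ:derivedirec} transform under this translation then yields the general identity.
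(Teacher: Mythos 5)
The central step of your argument---the pointwise lower bound for $\Psi_{\overline D,v}^{\widehat{\mu}_{\max}^{\mathrm{asy}}}$---rests on applying Siu's inequality (the theorem proved just before this corollary) to the pair $(\overline D+\epsilon\overline O(f_v),\,t\pi^*\zeta)$. That theorem is proved only for \emph{nef} adelic arithmetic $\mathbb R$-Cartier divisors, and $\overline D+\epsilon\overline O(f_v)$ is not nef, nor even relatively nef: adding an arbitrary non-negative continuous function $f_v$ to the Green function destroys the $(C^0\cap\mathrm{PSH})$ condition, no matter how small $\epsilon>0$ is. The ability to perturb a nef divisor in a non-nef direction and still bound the volume from below to first order is precisely the differentiability of the arithmetic volume, and this is exactly the mechanism the paper's proof supplies: it introduces the positive intersection product $\langle\overline D{}^d\rangle$, chooses an auxiliary nef $\overline M$ with $\overline M\pm\overline E$ nef and big and $\overline M-\overline D$ big, rewrites $\overline N+t\nu^*(\overline E)=(\overline N+t\nu^*(\overline M))-t\nu^*(\overline M-\overline E)$ so that Siu's inequality is applied to a genuinely nef pair, deduces $\nabla^+_{\overline E}\widehat{\vol}(\overline D)=(d+1)\langle\overline D{}^d\rangle\cdot\overline E$, and only then transfers to $\widehat{\mu}_{\max}^{\mathrm{asy}}$ via the comparison \eqref{Equ:comparaisondederive}. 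Your one-line application of Siu skips this entire mechanism, and there is no cheap substitute: monotonicity in the effective direction $\overline O(f_v)$ only gives $\widehat{\mu}_{\max}^{\mathrm{asy}}(\overline D+\epsilon\overline O(f_v))\geq\widehat{\mu}_{\max}^{\mathrm{asy}}(\overline D)$, losing exactly the linear term in $\epsilon$ you need. A secondary point: for merely continuous $f_v$ the quantity $\widehat{\deg}\bigl((\overline D+\epsilon\overline O(f_v))^{d+1}\bigr)$ involves $\overline O(f_v)$ in several slots and is not covered by the continuity extension of Conventions and terminology~\ref{CV:measure}, so even writing down your Siu-type inequality requires an integrability or approximation argument.

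The rest of your structure is reasonable but incomplete as written. The squeeze via constants (exact evaluation of $\Psi$ on $\overline O(c)$, which is a pullback from $\Spec K$, combined with super-additivity of the directional derivative and the matching of total masses using $(D^d)=\vol(D)$ for nef $D$) is sound, and is a legitimate variant of the paper's reduction ``super-additive functional bounded below by a linear one with matching value must agree with it.'' However, your treatment of general $\overline E$ is only sketched, and the proposed ``compatibility check'' does not come for free: the left-hand side of \eqref{Equ:derivedirec} is invariant under $\overline D\mapsto\overline D+\pi^*(\zeta)$, while the right-hand side shifts by $d\,\widehat{\deg}(\zeta)\,(D^{d-1}\cdot E)/\vol(D)$, so translating to the normalization $\widehat{\mu}_{\max}^{\mathrm{asy}}(\overline D_0)=\widehat{\deg}(\overline D_0^{\,d+1})=0$ gives the identity for the translate but does not transport it back by invariance alone; the paper instead handles general $\overline E$ through the super-additivity-versus-linearity reduction together with the volume-derivative identity. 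In short, the measure identity \eqref{Equ:egalitedemesure} could be reached along your lines once the first-order lower bound in the direction $\overline O(f_v)$ is actually established, but that lower bound is the technical heart of the corollary and is missing from your proposal.
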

\begin{proof} Since the map $\overline E\mapsto\nabla_{\overline E}^+{\widehat{\mu}_{\max}^{\mathrm{asy}}}(\overline D)$ from $\widehat{\Div}_{\mathbb R}(X)$ to $\mathbb R\cup\{+\infty\}$ is super-additive and $\overline E\mapsto\widehat{\deg}(\overline D{}^d\cdot\overline E)$ is a linear functional, it suffice to establish the inequality (see 
\cite[Remark~4.3]{Chen11})
\[\forall\,\overline E\in\widehat{\Div}_{\mathbb R}(X),\quad \nabla_{\overline E}^+{\widehat{\mu}_{\max}^{\mathrm{asy}}}(\overline D)\geq \frac{\widehat{\deg}(\overline D{}^d\cdot\overline E)}{\vol( D)}.\]
Note that both the condition \eqref{Equ:egalitedehaut} and the assertion \eqref{Equ:egalitedemesure} remain equivalent if one replaces $\overline D$ by $\overline D+\pi^*(\zeta)$ with $\zeta\in\widehat{\Div}_{\mathbb R}(\Spec K)$. Therefore we may assume that $\overline D$ is nef and big without loss of generality. In this case one has 
\[\forall\,\overline E\in\widehat{\Div}_{\mathbb R}(X),\quad \widehat{\deg}(\overline D{}^d\cdot\overline E)=\langle\overline D{}^d\rangle\cdot\overline E\]
since $\overline D$ is nef.
We shall
actually establish the equality \[\nabla_{\overline E}^+\widehat{\vol}(\overline D)=(d+1)\langle\overline D{}^d\rangle\cdot\overline E\] for any $\overline E\in\widehat{\mathrm{Int}}_{\mathbb R}(X)$. We choose $\overline M\in\widehat{\mathrm{Nef}}_{\mathbb R}(X)$ such that $\overline M-\overline E$ and $\overline M+\overline E$ are nef and big and $\overline M-\overline D$ is big. Then for any $(\nu,\overline N)\in\Theta(\overline D)$, by \eqref{Equ:sius ineq} one has
\begin{equation}
\begin{aligned}
\widehat{\vol}(\overline D+t\overline E) & \geq\widehat{\vol}(\overline N+t\nu^*(\overline E))=\widehat{\vol}((\overline N+t\nu^*(\overline M))-t\nu^*(\overline M-\overline E))\\
&\geq\widehat{\deg}((\overline N+t\nu^*(\overline M)^{d+1})\\
& \qquad\qquad -(d+1)t\widehat{\deg}((\overline N+t\nu^*(\overline M))^d\cdot\nu^*(\overline M-\overline E))\\&=\widehat{\deg}(\overline N{}^{d+1})+t(d+1)\widehat{\deg}(\overline N{}^d\cdot\overline E)+O(t^2),
\end{aligned}
\end{equation}
where the implicit constant in $O(t^2)$ only depends on $\widehat{\vol}(\overline M)=\widehat{\deg}(\nu^*(\overline M)^{d+1})$.
We then deduce
\[\widehat{\vol}(\overline D+t\overline E)\geq\widehat{\vol}(\overline D)+t(d+1)\langle\overline D{}^d\rangle\cdot\overline E+O(t^2), \]
which implies that  \[\nabla^+_{\overline E}\widehat{\vol}(\overline D)\geq(d+1)\langle\overline D{}^d\rangle\cdot\overline E\] By the continuity of the linear functional $\overline E\mapsto\langle\overline D{}^d\rangle\cdot\overline E$, we obtain that this inequality holds for general $\overline E\in\widehat{\Div}_{\mathbb R}(X)$. Finally, by the log-concavity of the arithmetic volume function, the  functional $\overline E\mapsto\nabla_{\overline E}^+\widehat{\vol}(\overline D)$ is super-additive (see 
\cite[Remark~4.2]{Chen11}). Therefore one obtains $\nabla_{\overline E}^+\widehat{\vol}(\overline D)=(d+1)\langle\overline D{}^d\rangle\cdot\overline E$. The result is thus proved by using the relation \eqref{Equ:comparaisondederive}.
\end{proof}

\subsection{Comparison with the distribution of non-positive points}
\setcounter{Theorem}{0}
In this subsection, we compare the functional approach and the distribution of non-positive points in the particular case where the set of non-positive points is Zariski dense. The main point is an equidistribution argument. Let $X$ be a projective and geometrically integral variety over a number field $K$. Let $\overline D$ be an adelic arithmetic $\mathbb R$-Cartier divisor on $X$. We assume that $\overline D$ is nef and $D$ is big. 

\begin{Proposition}
Assume that the set $X(\overline K)_{\leq 0}^{\overline D}$ is Zariski dense, then for any place $v\in M_K\cup K(\mathbb C)$ one has
\[\Psi_{\overline D,v}^{\widehat{\mu}_{\max}^{\mathrm{asy}}}=\frac{(\overline D{}^d)_v}{\vol(D)}\]
and
\[\Supp_{\mathrm{ess}}(X(\overline K)_{\leq 0}^{\overline D})_v^{\mathrm{an}}\supseteq\Supp\Psi_{\overline D,v}^{\widehat{\mu}_{\max}^{\mathrm{asy}}}.\]
\end{Proposition}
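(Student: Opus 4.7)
The plan is to combine an identity of arithmetic invariants with an equidistribution argument. Concretely, I would first check the hypothesis of the Corollary of the previous subsection by showing that both $\widehat{\mu}_{\max}^{\mathrm{asy}}(\overline D)$ and $\widehat{\deg}(\overline D{}^{d+1})/((d+1)\vol(D))$ vanish; this immediately yields the identity $\Psi_{\overline D,v}^{\widehat{\mu}_{\max}^{\mathrm{asy}}}=(\overline D{}^d)_v/\vol(D)$. The support inclusion would then follow from arithmetic equidistribution applied to a generic sequence of small points in $S:=X(\overline K)_{\leq 0}^{\overline D}$.

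For the vanishing, note first that the nefness of $\overline D$ forces $h_{\overline D}\geq 0$ on $X(\overline K)$, so the Zariski density of $X(\overline K)_{\leq 0}^{\overline D}$ gives $\widehat{\mu}_{\mathrm{ess}}(\overline D)=0$. Combining the general inequality $\widehat{\mu}_{\max}^{\mathrm{asy}}(\overline D)\leq\widehat{\mu}_{\mathrm{ess}}(\overline D)$ recalled in \S\ref{Subsec:asypmaxs} with the pseudo-effectivity criterion of Proposition~\ref{Pro:big:cripseudoeff} (nef implies pseudo-effective, whence $\widehat{\mu}_{\max}^{\mathrm{asy}}(\overline D)\geq 0$), I obtain $\widehat{\mu}_{\max}^{\mathrm{asy}}(\overline D)=0$. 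Plugging this into the inequality \eqref{Equ:inegalitecomparaison} of Proposition~\ref{Pro:comapraisondepsimu} yields $\widehat{\vol}(\overline D)\leq 0$; nefness again gives $\widehat{\vol}(\overline D)\geq 0$, so $\widehat{\vol}(\overline D)=0$. Yuan's arithmetic Hilbert--Samuel formula (applicable to nef $\overline D$ with $D$ big) identifies $\widehat{\vol}(\overline D)=\widehat{\deg}(\overline D{}^{d+1})$, whence $\widehat{\deg}(\overline D{}^{d+1})=0$, and the Corollary of the previous subsection then gives the first asserted equality.

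For the support inclusion I would invoke the arithmetic equidistribution theorem of Yuan at archimedean places and of Chambert-Loir at non-archimedean ones, in the form suitable for nef adelic $\mathbb R$-Cartier divisors with big underlying geometric class. By the Zariski density of $S$, one extracts a generic sequence $(x_n)_{n\geq 1}\subset S$ of distinct algebraic points; nefness forces $h_{\overline D}(x_n)=0$, hence $h_{\overline D}(x_n)\to\widehat{\mu}_{\mathrm{ess}}(\overline D)=\widehat{\deg}(\overline D{}^{d+1})/((d+1)\vol(D))$. The equidistribution theorem then ensures that the normalized Galois-orbit measures $[K(x_n):K]^{-1}\sum_{w\in O_v(x_n)}\delta_{w^{\mathrm{an}}}$ converge weakly on $X_v^{\mathrm{an}}$ to $\mu_v:=(\overline D{}^d)_v/\vol(D)$ at every place $v$.

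To conclude, fix $x\in\Supp\Psi_{\overline D,v}^{\widehat{\mu}_{\max}^{\mathrm{asy}}}=\Supp\mu_v$ and any proper subscheme $Y\subsetneq X$. For every open neighbourhood $U\subset X_v^{\mathrm{an}}$ of $x$ one has $\mu_v(U)>0$, and the Portmanteau inequality gives $\liminf_n [K(x_n):K]^{-1}\#\{w\in O_v(x_n):w^{\mathrm{an}}\in U\}\geq\mu_v(U)>0$; since $(x_n)$ is generic, $x_n\notin Y(\overline K)$ for all large $n$, so some Galois conjugate of such an $x_n$ lies in $U\cap\Delta(S;Y)_v^{\mathrm{an}}$. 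Hence $x\in\overline{\Delta(S;Y)_v^{\mathrm{an}}}$ for every $Y\subsetneq X$, which is exactly $x\in\Supp_{\mathrm{ess}}(S)_v^{\mathrm{an}}$. The main technical obstacle is that the arithmetic equidistribution theorem must be applied to an adelic arithmetic $\mathbb R$-Cartier divisor with only $D$ nef and big (rather than to an ample hermitian line bundle); this should follow by $\mathbb Q$-approximation of $\overline D$ combined with the continuity of the intersection measure developed in \S\ref{Sec:intersectionnumber}, but making the weak-$*$ convergence rigorous at non-archimedean places is the main point requiring care.
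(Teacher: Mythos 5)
Your first half is correct and is essentially the paper's own argument: nefness gives $h_{\overline D}\geq 0$, Zariski density of $X(\overline K)^{\overline D}_{\leq 0}$ gives $\widehat{\mu}_{\mathrm{ess}}(\overline D)=0$, and the chain $0=\widehat{\mu}_{\mathrm{ess}}(\overline D)\geq\widehat{\mu}_{\max}^{\mathrm{asy}}(\overline D)\geq\widehat{\vol}(\overline D)/(d+1)\vol(D)\geq 0$ (using \eqref{Equ:inegalitecomparaison} and $\widehat{\vol}(\overline D)=\widehat{\deg}(\overline D{}^{d+1})$ for nef $\overline D$) puts you in the situation \eqref{Equ:egalitedehaut} of the corollary of \S5.3, which yields $\Psi_{\overline D,v}^{\widehat{\mu}_{\max}^{\mathrm{asy}}}=(\overline D{}^d)_v/\vol(D)$.

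The gap is in the second half, and it is exactly the point you flag yourself: the equidistribution statement you need is not available as a citation in this generality. The theorems of \cite{Yuan07} and \cite{CL06} concern semipositive adelic line bundles with ample (or at least $\mathbb Q$-) underlying class, whereas here $\overline D$ is a nef adelic arithmetic $\mathbb R$-Cartier divisor with $D$ only big and nef. Your proposed repair by $\mathbb Q$-approximation does not obviously work: if $\overline D_\varepsilon$ is an approximation of $\overline D$, a sequence $(x_n)$ that is small for $\overline D$ (i.e. $h_{\overline D}(x_n)=0$) need not be small for $\overline D_\varepsilon$ — its heights with respect to $\overline D_\varepsilon$ need not approach the corresponding extremal value — so the equidistribution theorem for $\overline D_\varepsilon$ cannot be applied to the same sequence, and continuity of the limit measures in $\overline D$ does not by itself bridge this. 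In other words, the weak convergence of the Galois-orbit measures is precisely what has to be proved, not quoted. The paper proves it variationally with the tools of \S5.3: set $\Phi_S(\overline E):=\liminf_n h_{\overline E}(x_n)$ for a generic sequence $(x_n)$ in $X(\overline K)^{\overline D}_{\leq 0}$; this functional is super-additive, satisfies $\Phi_S\geq\widehat{\mu}_{\max}^{\mathrm{asy}}$ and $\Phi_S(\overline D)=\widehat{\mu}_{\max}^{\mathrm{asy}}(\overline D)=0$, so the directional derivatives at $\overline D$ satisfy $\nabla^+_{\overline E}\Phi_S(\overline D)\geq\nabla^+_{\overline E}\widehat{\mu}_{\max}^{\mathrm{asy}}(\overline D)$; combining \eqref{Equ:derivedirec} with \cite[Proposition 4.3]{Chen11} forces equality, $\nabla^+_{\overline E}\Phi_S(\overline D)=\widehat{\deg}(\overline D{}^d\cdot\overline E)/\vol(D)$, and since $h_{\overline D}(x_n)=0$ one has $\nabla^+_{\overline E}\Phi_S(\overline D)=\Phi_S(\overline E)$, whence $h_{\overline E}(x_n)$ converges to $\widehat{\deg}(\overline D{}^d\cdot\overline E)/\vol(D)$ for every $\overline E$ — in particular for $\overline E=\overline O(f_v)$, which is the weak convergence you wanted. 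Once you have that, your portmanteau/open-neighbourhood argument (or the paper's choice of an $f_v$ vanishing on $\overline{\Delta(S;Y)^{\an}_v}$ and positive at the putative point, contradicting $\lim_n h_{\overline O(f_v)}(x_{n_i})=0$ along a subsequence avoiding $Y$; note also the small verification that $h_{\overline O(f_v)}(x)=0$ for $x\notin Y(\overline K)$, using $F_\infty$-invariance at archimedean places) does conclude. So the structure of your proof is right, but the equidistribution input must be established by this derivative argument rather than imported.
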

\begin{proof}
Since the set $X(\overline K)_{\leq 0}^{\overline D}$ is Zariski dense, we obtain that the essential minimum
of the height function $h_{\overline D}(.)$ is non-positive. However, since $\overline D$ is nef one has $\widehat{\deg}(\overline D{}^{d+1})=\widehat{\vol}(\overline D)$ and therefore
\[0=\widehat{\mu}_{\mathrm{ess}}(\overline D)\geq\widehat{\mu}_{\max}^{\mathrm{asy}}(\overline D)\geq\frac{\widehat{\vol}(\overline D)}{(d+1)\vol(D)}\geq 0,\]
which implies that  $\widehat{\deg}(\overline D{}^{d+1})=\widehat{\vol}(\overline D)=0$.

Let $S=(x_n)_{n\geq 1}$ be a generic sequence in $X(\overline K)_{\leq 0}^{\overline D}$. For any adelic arithmetic $\mathbb R$-Cartier divisor $\overline E$ on $X$ we define
\[\Phi_S(\overline E):=\liminf_{n\rightarrow+\infty}h_{\overline E}(x_n).\]
This function takes value in $\mathbb R\cup\{+\infty\}$ on the cone $\Theta$ of adelic arithmetic $\mathbb R$-Cartier divisors $\overline E$ such that $E$ is big. The function $\Phi_S(.):\Theta\rightarrow\mathbb R\cup\{+\infty\}$ is also super-additive. Moreover, one has $\Phi_S(\overline E)\geq\widehat{\mu}_{\max}^{\mathrm{asy}}(\overline E)$ for any $\overline E\in\Theta$ and $\Phi_S(\overline D)=\widehat{\mu}_{\max}^{\mathrm{asy}}(\overline D)=0$. Therefore one has
\[\forall\,\overline E\in\widehat{\Div}_{\mathbb R}(X),\quad\nabla_{\overline E}^+\Phi_S(\overline D)\geq\nabla_{\overline E}^+\widehat{\mu}_{\max}^{\mathrm{asy}}(\overline D).\]
By \eqref{Equ:derivedirec} and \cite[Proposition 4.3]{Chen11}, one obtains
\[\forall\,\overline E\in\widehat{\Div}_{\mathbb R}(X),\quad\nabla_{\overline E}^+\Phi_S(\overline D)=\nabla_{\overline E}^+\widehat{\mu}_{\max}^{\mathrm{asy}}(\overline D)=\frac{\widehat{\deg}(\overline D{}^d\cdot\overline E)}{\vol(D)}.\]
This relation implies that, for any $\overline E\in\widehat{\Div}_{\mathbb R}(X)$, the sequence $(h_{\overline E}(x_n))_{n\geq 1}$ actually converges to $\vol(D)^{-1}\widehat{\deg}(\overline D{}^d\cdot\overline E)$. In fact, one has $h_{\overline D}(x_n)=0$ for any $n\in\mathbb N$, $n\geq 1$. Therefore $\nabla_{\overline E}^+\Phi_S(\overline D)=\Phi_S(\overline E)$. In particular, one has $\Phi_S(-\overline E)=-\Phi_S(\overline E)$, which implies the convergence of the sequence $(h_{\overline E}(x_n))_{n\geq 1}$.

Suppose
$\Supp_{\mathrm{ess}}(X(\overline K)_{\leq 0}^{\overline D})_v^{\mathrm{an}}\not\supseteq\Supp\Psi_{\overline D,v}^{\widehat{\mu}_{\max}^{\mathrm{asy}}}$, that is,
there is $w_v \in \Supp\Psi_{\overline D,v}^{\widehat{\mu}_{\max}^{\mathrm{asy}}} \setminus 
\Supp_{\mathrm{ess}}(X(\overline K)_{\leq 0}^{\overline D})_v^{\mathrm{an}}$.
As $w_v \not\in \Supp_{\mathrm{ess}}(X(\overline K)_{\leq 0}^{\overline D})_v^{\mathrm{an}}$,
there is a proper subscheme $Y$ of $X$ such that $w_v \not\in \overline{\Delta(X(\overline K)_{\leq 0}^{\overline D}; Y)_v^{\an}}$.
Let $f_v$ be a non-negative continuous function on $X_v^{\an}$ such that
$f_v(w_v) = 1$ and $f_v \equiv 0$ on $\overline{\Delta(X(\overline K)_{\leq 0}^{\overline D}; Y)_v^{\an}}$.

\begin{Claim}
For $x \in X(\overline K)_{\leq 0}^{\overline D} \setminus Y(\overline{K})$, we have
$h_{\overline{O}(f_v)}(x) = 0$.
\end{Claim}

\begin{proof}
If $v \in M_K$, the assertion is obvious, so that assume $v \in K(\CC)$.
By the definition of $\overline{O}(f_v)$ (cf. Conventions and terminology~\ref{CV:adelic:arith:div}), 
\[
4[K(x) : K] h_{\overline{O}(f_v)}(x) = \sum_{w \in O_{v}(x)} f_v(w) + \sum_{w' \in O_{\bar{v}}(x)} f_v(F_{\infty}(w')).
\]
Note that $F_{\infty}(O_{\bar{v}}(x)) = O_{v}(x)$, and
hence the assertion follows.
\end{proof}

By the previous observation,
\[\lim_{n\rightarrow \infty}h_{\overline O(f_v)}(x_n)=\Phi_S(\overline O(f_v))=\frac{(\overline D{}^d)_v(f_v)}{\vol(D)}>0.\]
On the other hand,
as $S = (x_n)_{n \geq 1}$ is generic, there is a subsequence $S' = (x_{n_i})$ such that
$x_{n_i} \not\in Y(\overline{K})$ for all $i$, so that, by the above claim,
$\lim_{i\rightarrow \infty}h_{\overline O(f_v)}(x_{n_i}) = 0$.
This is a contradiction.
\end{proof}

\renewcommand{\theTheorem}{\arabic{section}.\arabic{Theorem}}
\renewcommand{\theClaim}{\arabic{section}.\arabic{Theorem}.\arabic{Claim}}

\section{Extension of the asymptotic maximal slope}
\label{Sec:extension}
In this section, we extends the function of the asymptotic maximal slope to the whole space $\widehat{\Div}_{\mathbb R}(X)$ of adelic arithmetic $\mathbb R$-Cartier divisors. Let $\overline D$ be an adelic arithmetic $\mathbb R$-Cartier divisor on $X$. We define $\widehat{\mu}_{\max}^{\mathrm{asy}}(\overline D)$ to be
\begin{equation}\label{Equ:mumaxasygene}\inf_{\overline D_0\in\Theta}\lim_{t\rightarrow+\infty}\big(\widehat{\mu}_{\max}^{\mathrm{asy}}(t\overline D_0+\overline D)-t\widehat{\mu}_{\max}^{\mathrm{asy}}(\overline D_0)\big)\in\mathbb R\cup\{-\infty\},\end{equation}
where $\Theta$ denotes the set of all adelic arithmetic 
$\mathbb R$-Cartier divisors $\overline E$ such that $E$ is big. 
Note that if $D$ is big, then the value \eqref{Equ:mumaxasygene} coincides with the maximal asymptotic maximal slope of $\overline D$. In fact, for any $\overline D_0\in\Theta$ one has 
\[\widehat{\mu}_{\max}^{\mathrm{asy}}(t\overline D_0+\overline D)-t\widehat{\mu}_{\max}^{\mathrm{asy}}(\overline D_0)\geq \widehat{\mu}_{\max}^{\mathrm{asy}}(t\overline D_0)+\widehat{\mu}_{\max}^{\mathrm{asy}}(\overline D)-t\widehat{\mu}_{\max}^{\mathrm{asy}}(\overline D_0)=\widehat{\mu}_{\max}^{\mathrm{asy}}(\overline D).\]
Therefore the infimum is attained at $\overline D_0=\overline D$ and coincides with $\widehat{\mu}_{\max}^{\mathrm{asy}}(\overline D)$.

The extended function also verifies the good properties such as positive homogenity, super-additivity etc. We resumes these properties in the following proposition.
\begin{Proposition}\label{Pro:prorietiesdemuasy}\begin{enumerate}[(1)]
\item Let $\overline D$ be an adelic arithmetic $\mathbb R$-Cartier divisor on $X$. For any $\lambda\geq 0$ one has $\widehat{\mu}_{\max}^{\mathrm{asy}}(\lambda\overline D)=\lambda\widehat{\mu}_{\max}^{\mathrm{asy}}(\overline D)$.
\item Let $\overline D_1$ and $\overline D_2$ be two adelic arithmetic $\mathbb R$-Cartier divisors on $X$. One has \[\widehat{\mu}_{\max}^{\mathrm{asy}}(\overline D_1+\overline D_2)\geq \widehat{\mu}_{\max}^{\mathrm{asy}}(\overline D_1)+\widehat{\mu}_{\max}^{\mathrm{asy}}(\overline D_2).\]
\item Let $\overline D_1$ and $\overline D_2$ be two adelic arithmetic $\mathbb R$-Cartier divisors on $X$. If $\overline D_1\geq\overline D_2$, then $\widehat{\mu}_{\max}^{\mathrm{asy}}(\overline D_1)\geq \widehat{\mu}_{\max}^{\mathrm{asy}}(\overline D_2)$.
\item If $\overline D$ is an adelic arithmetic $\mathbb R$-Cartier divisor on $X$ and $\zeta$ is an adelic arithmetic $\mathbb R$-Cartier divisor on $\Spec K$, one has 
\[\widehat{\mu}_{\max}^{\mathrm{asy}}(\overline D+\pi^*(\zeta))=\widehat{\mu}_{\max}^{\mathrm{asy}}(\overline D)+\widehat{\deg}(\zeta).\]
\item For any adelic arithmetic $\mathbb R$-Cartier divisor $\overline D$ on $X$ and any $\varphi\in\Rat(X)^{\times}_{\mathbb R}$ one has 
\[\widehat{\mu}_{\max}^{\mathrm{asy}}(\overline D+\widehat{(\varphi)})=\widehat{\mu}_{\max}^{\mathrm{asy}}(\overline D).\]
\end{enumerate}
\end{Proposition}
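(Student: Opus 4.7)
The plan is to deduce each of the five items from the corresponding property established in \S\ref{Subsec:asypmaxs} for the restriction of $\widehat{\mu}_{\max}^{\mathrm{asy}}$ to the cone $\Theta$ of adelic arithmetic $\mathbb R$-Cartier divisors with big underlying $\mathbb R$-Cartier divisor. The common mechanism is that for any fixed $\overline D_0\in\Theta$ and any $\overline D\in\widehat{\Div}_{\mathbb R}(X)$, the sum $t\overline D_0+\overline D$ lies in $\Theta$ once $t$ is sufficiently large, so that the big-case statements can be applied inside the limit in \eqref{Equ:mumaxasygene}, and the subsequent infimum over $\overline D_0$ transfers the property to the extended functional. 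As a preliminary I would record that, setting
\[
\varphi_{\overline D_0,\overline D}(t):=\widehat{\mu}_{\max}^{\mathrm{asy}}(t\overline D_0+\overline D)-t\widehat{\mu}_{\max}^{\mathrm{asy}}(\overline D_0),
\]
super-additivity and positive homogeneity on $\Theta$ applied to the identity $t'\overline D_0+\overline D=(t'-t)\overline D_0+(t\overline D_0+\overline D)$ for $t'>t$ yield $\varphi_{\overline D_0,\overline D}(t')\geq\varphi_{\overline D_0,\overline D}(t)$, so that $\lim_{t\to+\infty}\varphi_{\overline D_0,\overline D}(t)$ exists in $\mathbb R\cup\{+\infty\}$.

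For (1) with $\lambda>0$, the change of variable $t=\lambda s$ in \eqref{Equ:mumaxasygene} applied to $\lambda\overline D$, combined with the identity $\widehat{\mu}_{\max}^{\mathrm{asy}}(\lambda(s\overline D_0+\overline D))=\lambda\widehat{\mu}_{\max}^{\mathrm{asy}}(s\overline D_0+\overline D)$ (positive homogeneity on $\Theta$), pulls the factor $\lambda$ outside the limit and the infimum; the case $\lambda=0$ is immediate since $\varphi_{\overline D_0,0}(t)=0$ for all $t>0$. For (4) and (5), I would fix $\overline D_0\in\Theta$ and observe that for $t$ large, both $t\overline D_0+\overline D+\pi^*(\zeta)$ and $t\overline D_0+\overline D+\widehat{(\varphi)}$ lie in $\Theta$, since $\pi^*(\zeta)$ has trivial Cartier part while $\widehat{(\varphi)}$ has principal Cartier part and bigness is insensitive to both. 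The big-case identities $\widehat{\mu}_{\max}^{\mathrm{asy}}(\overline E+\pi^*(\zeta))=\widehat{\mu}_{\max}^{\mathrm{asy}}(\overline E)+\widehat{\deg}(\zeta)$ and $\widehat{\mu}_{\max}^{\mathrm{asy}}(\overline E+\widehat{(\varphi)})=\widehat{\mu}_{\max}^{\mathrm{asy}}(\overline E)$ recalled in \S\ref{Subsec:asypmaxs} then translate the quantity inside the limit by $\widehat{\deg}(\zeta)$ and by $0$ respectively, and passing to the limit and then to the infimum yields (4) and (5).

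For (2), I would exploit the decomposition $t\overline D_0+\overline D_1+\overline D_2=((t/2)\overline D_0+\overline D_1)+((t/2)\overline D_0+\overline D_2)$, valid for $t$ large enough that both summands lie in $\Theta$, together with super-additivity on $\Theta$, to obtain
\[
\varphi_{\overline D_0,\overline D_1+\overline D_2}(t)\geq \varphi_{\overline D_0,\overline D_1}(t/2)+\varphi_{\overline D_0,\overline D_2}(t/2).
\]
Letting $t\to+\infty$ and using that each $\lim_{t\to+\infty}\varphi_{\overline D_0,\overline D_i}(t)\geq \widehat{\mu}_{\max}^{\mathrm{asy}}(\overline D_i)$ by the defining infimum gives the lower bound $\widehat{\mu}_{\max}^{\mathrm{asy}}(\overline D_1)+\widehat{\mu}_{\max}^{\mathrm{asy}}(\overline D_2)$, which is independent of $\overline D_0$; taking infimum over $\overline D_0$ proves (2). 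For (3), I would write $\overline D_1=\overline D_2+(\overline D_1-\overline D_2)$ with $\overline D_1-\overline D_2\geq 0$ and apply (2); it remains only to show that $\widehat{\mu}_{\max}^{\mathrm{asy}}(\overline E)\geq 0$ for every effective $\overline E$, which follows because $t\overline D_0+\overline E\geq t\overline D_0$ and monotonicity on $\Theta$ imply $\varphi_{\overline D_0,\overline E}(t)\geq 0$. The conclusion then holds with the extended-real convention $-\infty+x=-\infty$ in the case $\widehat{\mu}_{\max}^{\mathrm{asy}}(\overline D_2)=-\infty$.

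The main obstacle is the careful bookkeeping with possibly infinite values and the verification that taking limits and infima commutes correctly with the big-case super-additivity and translation identities; once the preliminary monotonicity of $\varphi_{\overline D_0,\overline D}(t)$ is established, each of the five assertions reduces to a routine manipulation inside \eqref{Equ:mumaxasygene}.
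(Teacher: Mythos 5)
Your proposal is correct and follows essentially the same route as the paper: apply the big-cone properties of $\widehat{\mu}_{\max}^{\mathrm{asy}}$ (homogeneity, super-additivity, order preservation, and the translation identities) to $t\overline D_0+\overline D$ inside the limit of \eqref{Equ:mumaxasygene}, pass to the limit, and then take the infimum over $\overline D_0$, exactly as in the paper's proof; your preliminary monotonicity of $\varphi_{\overline D_0,\overline D}(t)$ is a welcome justification of the limit in the definition. The only divergence is item (3), which you deduce from (2) together with $\widehat{\mu}_{\max}^{\mathrm{asy}}(\overline E)\geq 0$ for effective $\overline E$, whereas the paper applies order preservation on the big cone directly to $t\overline D_0+\overline D_1\geq t\overline D_0+\overline D_2$; both arguments rest on the same monotonicity on $\Theta$, so this is a harmless repackaging.
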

\begin{proof}
(1) The equality is trivial when $\lambda=0$. In the following, we assume that $\lambda>0$. For any $\overline{D}_0\in\Theta$ one has
\begin{multline*}
\lim_{t\rightarrow+\infty}\big(\widehat{\mu}_{\max}^{\mathrm{asy}}(t\overline D_0+\lambda\overline D)-t\widehat{\mu}_{\max}^{\mathrm{asy}}(\overline D_0)\big) \\
=\lim_{t\rightarrow+\infty}\big(\widehat{\mu}_{\max}^{\mathrm{asy}}(\lambda t\overline D_0+\lambda\overline D)-\lambda t\widehat{\mu}_{\max}^{\mathrm{asy}}(\overline D_0)\big) \\
=\lambda\lim_{t\rightarrow+\infty} \big(\widehat{\mu}_{\max}^{\mathrm{asy}}(t\overline D_0+\overline D)-t\widehat{\mu}_{\max}^{\mathrm{asy}}(\overline D_0)\big)
\end{multline*}
By taking the infimum with respect to $\overline D_0$, one obtains the result.

(2) Let $\overline D_0$ be an element in $\Theta$. For sufficiently positive $t$, one has
\begin{multline*}
\lim_{t\rightarrow+\infty}\widehat{\mu}_{\max}^{\mathrm{asy}}(2t\overline D_0+\overline D_1+\overline D_2)-2t\widehat{\mu}_{\max}^{\mathrm{asy}}(\overline D_0)\\
\hspace{-10em} \geq \lim_{t\rightarrow+\infty}\widehat{\mu}_{\max}^{\mathrm{asy}}(t\overline D_0+\overline D_1)-t\widehat{\mu}_{\max}^{\mathrm{asy}}(\overline D_0) \\
+\lim_{t\rightarrow+\infty}\widehat{\mu}_{\max}^{\mathrm{asy}}(t\overline D_0+\overline D_1)-t\widehat{\mu}_{\max}^{\mathrm{asy}}(\overline D_0)
\geq\widehat{\mu}_{\max}^{\mathrm{asy}}(\overline D_1)+\widehat{\mu}_{\max}^{\mathrm{asy}}(\overline D_2).
\end{multline*}
Since $\overline D_0$ is arbitrary, one obtains the result.

(3) Let $\overline D_0$ be an element in $\Theta$. For sufficiently positive number $t$ one has
\begin{multline*}
\lim_{t\rightarrow+\infty}\widehat{\mu}_{\max}^{\mathrm{asy}}(t\overline D_0+\overline D_1)-t\widehat{\mu}_{\max}^{\mathrm{asy}}(\overline D_0)\\
\geq \lim_{t\rightarrow+\infty}\widehat{\mu}_{\max}^{\mathrm{asy}}(t\overline D_0+\overline D_2)-t\widehat{\mu}_{\max}^{\mathrm{asy}}(\overline D_0)\geq\widehat{\mu}_{\max}^{\mathrm{asy}}(\overline D_2).
\end{multline*}
Since $\overline D_0$ is arbitrary, one obtains $\widehat{\mu}_{\max}^{\mathrm{asy}}(\overline D_1)\geq\widehat{\mu}_{\max}^{\mathrm{asy}}(\overline D_2)$.

(4) For any $\overline D_0\in\Theta$ and any sufficiently positive number $t$, one has
\[\widehat{\mu}_{\max}^{\mathrm{asy}}(t\overline D_0+\overline D+\pi^*(\zeta))-t\widehat{\mu}_{\max}^{\mathrm{asy}}(\overline D_0)=\widehat{\mu}_{\max}^{\mathrm{asy}}(t\overline D_0+\overline D)-t\widehat{\mu}_{\max}^{\mathrm{asy}}(\overline D_0)+\widehat{\deg}(\zeta).\]
By passing to limit when $t$ tends to the infinity and then by taking the infimum with respect to $\overline D_0$, one obtains $\widehat{\mu}_{\max}^{\mathrm{asy}}(\overline D+\pi^*(\zeta))=\widehat{\mu}_{\max}^{\mathrm{asy}}(\overline D)+\widehat{\deg}(\zeta)$.

(5) Let $\overline D_0$ be an element in $\Theta$. For sufficiently positive number $t$, one has
\[\widehat{\mu}_{\max}^{\mathrm{asy}}(t\overline D_0+\overline D+\widehat{(\varphi)})-t\widehat{\mu}_{\max}^{\mathrm{asy}}(\overline D_0)=\widehat{\mu}_{\max}^{\mathrm{asy}}(t\overline D_0+\overline D)-t\widehat{\mu}_{\max}^{\mathrm{asy}}(\overline D_0).\]
Therefore $\widehat{\mu}_{\max}^{\mathrm{asy}}(\overline D+\widehat{(\varphi)})=\widehat{\mu}_{\max}^{\mathrm{asy}}(\overline D)$.

\end{proof}

The following is a criterion for the pseudo-effectivity of adelic arithmetic $\mathbb R$-Cartier divisors, which is a generalization of Proposition~\ref{Pro:big:cripseudoeff}.

\begin{Proposition}\label{Pro:cripseudoeff}
Let $\overline D$ be an adelic arithmetic $\mathbb R$-Cartier divisor. Then $\overline D$ is pseudo-effective if and only if $D$ is pseudo-effective and $\widehat{\mu}_{\max}^{\mathrm{asy}}(\overline D)\geq 0$.
\end{Proposition}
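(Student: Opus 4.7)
The plan is to prove the two implications separately, using the big-case criterion (Proposition~\ref{Pro:big:cripseudoeff}) together with the extended super-additivity (Proposition~\ref{Pro:prorietiesdemuasy}(2)) and a perturbation argument. The necessity of $D$ being geometrically pseudo-effective is immediate: given any big $\mathbb{R}$-Cartier divisor $E$ on $X$, I can promote it to an arithmetically big $\overline{E}$ with underlying divisor $E$ (by adjoining $s\pi^*(\zeta)$ with $s$ sufficiently large for any $\zeta$ of positive Arakelov degree), so that arithmetic pseudo-effectivity of $\overline{D}$ forces $\overline{D}+\overline{E}$ to be big, hence $D+E$ is big.

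For the slope inequality $\widehat{\mu}_{\max}^{\mathrm{asy}}(\overline{D})\geq 0$ under the hypothesis that $\overline{D}$ is pseudo-effective, fix any $\overline{D}_0\in\Theta$. I first observe that the quantity $\widehat{\mu}_{\max}^{\mathrm{asy}}(t\overline{D}_0+\overline{D})-t\widehat{\mu}_{\max}^{\mathrm{asy}}(\overline{D}_0)$ (well-defined for $t$ large, since $tD_0+D$ is then big) is invariant under translation $\overline{D}_0\mapsto\overline{D}_0+\pi^*(\zeta)$, by Proposition~\ref{Pro:prorietiesdemuasy}(4) applied to both terms. Choosing $\widehat{\deg}(\zeta)>-\widehat{\mu}_{\max}^{\mathrm{asy}}(\overline{D}_0)$, I may therefore reduce to the case where $\overline{D}_0$ itself is arithmetically big. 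Then for every $\epsilon>0$, $\overline{D}+\epsilon\overline{D}_0$ is arithmetically big (sum of a pseudo-effective and a big one), so $\widehat{\mu}_{\max}^{\mathrm{asy}}(\overline{D}+\epsilon\overline{D}_0)>0$ by Proposition~\ref{Pro:big:cripseudoeff}.

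For $t>\epsilon$, I apply the super-additivity of \S\ref{Subsec:asypmaxs} (in the big case) to the decomposition $\overline{D}+t\overline{D}_0=(\overline{D}+\epsilon\overline{D}_0)+(t-\epsilon)\overline{D}_0$, both summands having big underlying divisor, which gives
\[
\widehat{\mu}_{\max}^{\mathrm{asy}}(\overline{D}+t\overline{D}_0)-t\widehat{\mu}_{\max}^{\mathrm{asy}}(\overline{D}_0)\ \geq\ \widehat{\mu}_{\max}^{\mathrm{asy}}(\overline{D}+\epsilon\overline{D}_0)-\epsilon\widehat{\mu}_{\max}^{\mathrm{asy}}(\overline{D}_0).
\]
Letting $t\to+\infty$ and then $\epsilon\to 0^+$ produces a non-negative bound: the first term on the right stays positive for every $\epsilon>0$, and the second tends to $0$. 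Taking the infimum over $\overline{D}_0\in\Theta$ in the definition \eqref{Equ:mumaxasygene} yields $\widehat{\mu}_{\max}^{\mathrm{asy}}(\overline{D})\geq 0$.

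For the converse, assume $D$ is pseudo-effective and $\widehat{\mu}_{\max}^{\mathrm{asy}}(\overline{D})\geq 0$. For any arithmetically big $\overline{E}$, the underlying divisor $D+E$ is big, so $\overline{D}+\overline{E}$ lies in $C_\circ$ and Proposition~\ref{Pro:big:cripseudoeff} is available for it; combined with extended super-additivity this gives
\[
\widehat{\mu}_{\max}^{\mathrm{asy}}(\overline{D}+\overline{E})\ \geq\ \widehat{\mu}_{\max}^{\mathrm{asy}}(\overline{D})+\widehat{\mu}_{\max}^{\mathrm{asy}}(\overline{E})\ \geq\ \widehat{\mu}_{\max}^{\mathrm{asy}}(\overline{E})>0,
\]
the strict positivity coming from Proposition~\ref{Pro:big:cripseudoeff} applied to the arithmetically big $\overline{E}$. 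Hence $\overline{D}+\overline{E}$ is big, so $\overline{D}$ is pseudo-effective. The main technical obstacle is the absence of a continuity statement for $\widehat{\mu}_{\max}^{\mathrm{asy}}$ at divisors whose underlying $\mathbb{R}$-divisor is only pseudo-effective: Lemma~\ref{Lem:continuity} requires bigness and therefore cannot be invoked at $\overline D$ itself. The perturbation $\overline{D}\rightsquigarrow\overline{D}+\epsilon\overline{D}_0$ circumvents this by keeping the analysis inside the big cone and exploiting the strict positivity $\widehat{\mu}_{\max}^{\mathrm{asy}}(\overline{D}+\epsilon\overline{D}_0)>0$ rather than a specific limit value.
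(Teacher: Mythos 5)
Your proof is correct and follows essentially the same route as the paper: the forward direction uses the same translation of $\overline D_0$ by $\pi^*(\zeta)$ to make it arithmetically big, the same decomposition $t\overline D_0+\overline D=(t-\epsilon)\overline D_0+(\overline D+\epsilon\overline D_0)$ with super-additivity and Proposition~\ref{Pro:big:cripseudoeff}, and the converse is verbatim the paper's argument via extended super-additivity. The only (harmless) addition is that you spell out why $D$ itself is pseudo-effective, which the paper simply asserts.
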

\begin{proof}
Assume that $\overline D$ is pseudo-effective, then $D$ is a pseudo-effective $\mathbb R$-divisor. Moreover, for any $\overline D_0\in\Theta$, there exists $\zeta\in\widehat{\Div}_{\mathbb R}(\Spec K)$ such that $\overline D_1=\overline D_0+\pi^*(\zeta)$ is big. Therefore, for any $\varepsilon>0$, the adelic arithmetic $\mathbb R$-Cartier divisor $\varepsilon(\overline D_0+\pi^*(\zeta))+\overline D$ is big. Hence for $t>\varepsilon$ one has
\[\begin{split}&\quad\;\widehat{\mu}_{\max}^{\mathrm{asy}}(t\overline D_0+\overline D)-t\widehat{\mu}_{\max}^{\mathrm{asy}}(\overline D_0)=\widehat{\mu}_{\max}^{\mathrm{asy}}(t\overline D_1+\overline D)-t\widehat{\mu}_{\max}^{\mathrm{asy}}(\overline D_1)\\
&\geq (t-\varepsilon)\widehat{\mu}_{\max}^{\mathrm{asy}}(\overline D_1)-t\widehat{\mu}_{\max}^{\mathrm{asy}}(\overline D_1)=-\varepsilon\widehat{\mu}_{\max}^{\mathrm{asy}}(\overline D_1).
\end{split}\]
Since $\varepsilon$ is arbitrary, we obtain that 
\[\lim_{t\rightarrow+\infty}\big(\widehat{\mu}_{\max}^{\mathrm{asy}}(t\overline D_0+\overline D)-t\widehat{\mu}_{\max}^{\mathrm{asy}}(\overline D_0)\big)\geq 0.\]

Conversely assume that $D$ is pseudo-effective and  $\widehat{\mu}_{\max}^{\mathrm{asy}}(\overline D)\geq 0$. If $\overline D{}'$ is a big adelic arithmetic $\mathbb R$-Cartier divisor, then $D+D'$ is big since $D$ is pseudo-effective and $D'$ is big. Moreover, one has \[\widehat{\mu}_{\max}^{\mathrm{asy}}(\overline D+\overline D{}')\geq \widehat{\mu}_{\max}^{\mathrm{asy}}(\overline D)+\widehat{\mu}_{\max}^{\mathrm{asy}}(\overline D{}')>0.\]
Hence $\overline D+\overline D{}'$ is big by Proposition~\ref{Pro:big:cripseudoeff}. 
Therefore $\overline D$ is pseudo-effective. 
\end{proof}

The results which we have obtained in \S\ref{Sub:formalDirichlet} can be applied to the extended function $\widehat{\mu}_{\max}^{\mathrm{asy}}(.)$. Let $C_\circ$ be the cone of all pseudo-effective adelic arithmetic $\mathbb R$-Cartier divisors and $V=\widehat{\Div}_{\mathbb R}(X)$. Then the cone $C_\circ$ satisfies the conditions (a)--(c) of \S\ref{Sub:formalDirichlet}. Moreover, Proposition \ref{Pro:cripseudoeff} shows that the restriction of $\widehat{\mu}_{\max}^{\mathrm{asy}}$ on $C_\circ$ is a real valued function. By Proposition \ref{Pro:prorietiesdemuasy}, this function verifies the conditions (1)--(3) of \S\ref{Sub:formalDirichlet}.
Thus we obtain the following corollary of Theorem \ref{Thm:consequenceDirichlet}.

\begin{Corollary}\label{Cor:muasy2}
Let $\overline D$ be an adelic arithmetic $\mathbb R$-Cartier divisor such that $D$ is pseudo-effective and that $\widehat{\mu}_{\max}^{\mathrm{asy}}(\overline D)=0$. If $s$ is an element of $\Rat(X)_{\mathbb R}^\times$ with $\overline D+\widehat{(s)}\geq 0$, then 
\[
\Supp(\Psi_{\overline D,v}^{\widehat{\mu}_{\max}^{\mathrm{asy}}})\cap \{ x \in X_v^{\an} \mid | s |_{g_v} < 1 \} = \varnothing
\]
for any $v\in M_K\cup K(\mathbb C)$.
\end{Corollary}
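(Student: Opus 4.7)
The plan is to deduce Corollary \ref{Cor:muasy2} as a direct specialization of Theorem \ref{Thm:consequenceDirichlet}, by verifying that the abstract data $(V, C_\circ, \mu)$ with $V = \widehat{\Div}_{\mathbb R}(X)$, $C_\circ$ the cone of pseudo-effective adelic arithmetic $\mathbb R$-Cartier divisors, and $\mu = \widehat{\mu}_{\max}^{\mathrm{asy}}$ satisfy all the hypotheses of that theorem at the specific divisor $\overline D$ in the statement.

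First I would recall from the paragraph immediately preceding the statement that conditions (a)--(c) for $C_\circ$ and conditions (1), (2) for $\mu$ have already been checked via Propositions \ref{Pro:cripseudoeff} and \ref{Pro:prorietiesdemuasy}. Condition (3) is immediate in our setup because $V = \widehat{\Div}_{\mathbb R}(X)$, so $V_+ = \widehat{\Div}_{\mathbb R}(X)_+$ and one simply takes $\nabla_\mu(\overline E,\overline D) := \nabla_{\overline E}^+\mu(\overline D)$. Next, since $D$ is pseudo-effective and $\widehat{\mu}_{\max}^{\mathrm{asy}}(\overline D)=0\geq 0$, Proposition \ref{Pro:cripseudoeff} gives $\overline D\in C_\circ$, and the hypothesis $\mu(\overline D)=0$ is exactly $\widehat{\mu}_{\max}^{\mathrm{asy}}(\overline D)=0$.

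The main verification, and the one place I would spend some care, is to show that $\overline D\in C_{\circ\circ}$, i.e.\ that $\overline E\mapsto\nabla_\mu(\overline E,\overline D)$ preserves the order on $\widehat{\Div}_{\mathbb R}(X)_+$. For this I would invoke the order-preserving property of $\widehat{\mu}_{\max}^{\mathrm{asy}}(.)$ established in Proposition \ref{Pro:prorietiesdemuasy}(3): if $\overline E_1\leq \overline E_2$ are both effective, then for every $\varepsilon>0$ one has $\overline D+\varepsilon\overline E_1\leq \overline D+\varepsilon\overline E_2$, hence
\[
\mu(\overline D+\varepsilon\overline E_1)-\mu(\overline D)\leq \mu(\overline D+\varepsilon\overline E_2)-\mu(\overline D).
\]
Dividing by $\varepsilon>0$ and taking $\limsup_{\varepsilon\to 0+}$ yields $\nabla_\mu(\overline E_1,\overline D)\leq\nabla_\mu(\overline E_2,\overline D)$; condition (b) of $C_\circ$ ensures $\overline D+\varepsilon\overline E_i\in C_\circ$ for $\varepsilon$ small, so the values $\mu(\overline D+\varepsilon\overline E_i)$ are well-defined.

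With $\overline D\in C_{\circ\circ}$, $\mu(\overline D)=0$, and $s\in\Rat(X)_{\mathbb R}^\times$ satisfying $\overline D+\widehat{(s)}\geq 0$, Theorem \ref{Thm:consequenceDirichlet} applies verbatim and yields
\[
\Supp(\Psi_{\overline D,v}^{\widehat{\mu}_{\max}^{\mathrm{asy}}})\cap\{x\in X_v^{\an}\mid |s|_{g_v}(x)<1\}=\varnothing
\]
for every $v\in M_K\cup K(\mathbb C)$, which is the assertion. Apart from the bookkeeping in the previous paragraph, no new ingredient is required; the substantive content (existence of $\widehat{\mu}_{\max}^{\mathrm{asy}}$ as a real-valued functional on $C_\circ$, its homogeneity, translation invariance, and monotonicity) is already packaged in Propositions \ref{Pro:cripseudoeff} and \ref{Pro:prorietiesdemuasy}.
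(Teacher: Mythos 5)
Your proposal is correct and follows essentially the same route as the paper: the corollary is obtained by applying Theorem \ref{Thm:consequenceDirichlet} with $V=\widehat{\Div}_{\mathbb R}(X)$, $C_\circ$ the pseudo-effective cone and $\mu=\widehat{\mu}_{\max}^{\mathrm{asy}}$, the hypotheses (a)--(c) and (1)--(3) being supplied by Propositions \ref{Pro:prorietiesdemuasy} and \ref{Pro:cripseudoeff}. Your explicit verification that $\overline D\in C_{\circ\circ}$ via the monotonicity of $\widehat{\mu}_{\max}^{\mathrm{asy}}$ is exactly the point the paper leaves implicit, and it is carried out correctly.
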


We conclude the article by the following question.
\begin{Question}\label{Que:refined}
Let $\overline D$ be an adelic arithmetic $\mathbb R$-Cartier divisor such that $D$ is pseudo-effective and that $\widehat{\mu}_{\max}^{\mathrm{asy}}(\overline D)=0$. Assume that, for any place $v\in M_K\cup K(\mathbb C)$, the union of all algebraic curves lying in $\Supp(\Psi_{\overline D,v}^{\widehat{\mu}_{\max}^{\mathrm{asy}}})$ is contained in the augmented base locus of $D_v$, does the Dirichlet property always hold for $\overline D$? 
\end{Question}

\end{document}